\newcommand{\rt}{\rightarrow}
\newcommand{\lrt}{\longrightarrow}
\newcommand{\st}{\stackrel}
\newcommand{\La}{\Lambda}
\newcommand{\Z}{\mathbb{Z} }
\newcommand{\CA}{\mathcal{A} }
\newcommand{\CC}{\mathcal{C} }
\newcommand{\CI}{\mathcal{I} }
\newcommand{\CP}{\mathcal{P} }
\newcommand{\CR}{\mathcal{R} }
\newcommand{\CX}{\mathcal{X} }
\newcommand{\CV}{\mathcal{V}}
\newcommand{\CU}{\mathcal{U}}
\newcommand{\CZ}{\mathcal{Z} }
\newcommand{\CM}{\mathcal{M} }
\newcommand{\Mod}{{\rm{Mod\mbox{-}}}}
\newcommand{\mmod}{{\rm{{mod\mbox{-}}}}}
\newcommand{\prj}{{\rm{prj}\mbox{-}}}
\newcommand{\Gprj}{{\Gp\mbox{-}}}
\newcommand{\inj}{{\rm{inj}\mbox{-}}}
\newcommand{\supp} {{\rm supp}\mbox{-}}
\newcommand{\ind}{{\rm ind}\mbox{-}}
\newcommand{\im}{{\rm{Im}}}
\newcommand{\add}{{\rm{add}\mbox{-}}}
\newcommand{\Gp}{{\mathcal{G}p}}
\newcommand{\Coker}{{\rm{Coker}}}
\newcommand{\Ker}{{\rm{Ker}}}
\newcommand{\Hom}{{\rm{Hom}}}
\newcommand{\Ext}{{\rm{Ext}}}
\newtheorem{theorem}{Theorem}[section]
\newtheorem{corollary}[theorem]{Corollary}
\newtheorem{lemma}[theorem]{Lemma}
\theoremstyle{definition}
\newtheorem{definition}[theorem]{Definition}
\newtheorem{remark}[theorem]{Remark}
\theoremstyle{plain}
\newtheorem{stheorem}{Theorem}[subsection]
\newtheorem{scorollary}[stheorem]{Corollary}
\newtheorem{slemma}[stheorem]{Lemma}
\newtheorem{sproposition}[stheorem]{Proposition}
\theoremstyle{definition}
\newtheorem{sdefinition}[stheorem]{Definition}
\newtheorem{sremark}[stheorem]{Remark}
\def\k{\Bbbk}
\numberwithin{equation}{subsection}
\pgfplotsset{compat=1.18}
\begin{document}

\title[Covering techniques in higher AR-theory]{Covering techniques in higher Auslander-Reiten theory}

\author[Asadollahi, Hafezi, Sourani and Vahed]{Javad Asadollahi, Rasool Hafezi, Zohreh Sourani and Razieh Vahed}

\address{Department of Pure Mathematics, Faculty of Mathematics and Statistics, University of Isfahan, Isfahan 81746-73441, Iran}
\email{asadollahi@sci.ui.ac.ir}

\address{School of Mathematics and Statistics, Nanjing University of Information Science \& Technology, Nanjing, Jiangsu 210044, P.\,R. China}
\email{hafezi@nuist.edu.cn}

\address{Department of Pure Mathematics, Faculty of Mathematics and Statistics, University of Isfahan, Isfahan 81746-73441, Iran}
\email{z.sourani@sci.ui.ac.ir}

\address{Department of Mathematics, Khansar Campus, University of Isfahan, Iran and \\ School of Mathematics, Institute for Research in Fundamental Sciences (IPM), P.O. Box: 19395‐ 5746, Tehran, Iran.}
\email{r.vahed@khc.ui.ac.ir, vahed@ipm.ir }

\makeatletter \@namedef{subjclassname@2020}{\textup{2020} Mathematics Subject Classification} \makeatother

\subjclass[2020]{18A25, 18A32, 16B50}

\keywords{$G$-categories, orbit categories, $n$-precluster tilting subcategories}

\begin{abstract}
This paper investigates the behavior of $n$-precluster tilting subcategories under the push-down functor in the context of Galois coverings of locally bounded categories. Building on higher Auslander-Reiten theory and covering techniques, we establish that for a locally support-finite category $\CC$ with a free group action $G$ on its indecomposables, the push-down functor maps $G$-equivariant $n$-precluster tilting subcategories of $\mmod\CC$ to $n$-precluster tilting subcategories of
$\mmod(\CC/G)$, and vice versa. These results provide a framework for studying $\tau_n$-selfinjective algebras. We further prove that $\mmod\CC$ is $n$-minimal Auslander-Gorenstein if and only if $\mmod(\CC/G)$ is so, under square-free conditions on $\CC/G$. Additionally, we analyze support $\tau_n$-tilting pairs via the push-down functor, showing that locally $\tau_n$-tilting finiteness is preserved under Galois coverings. Our work offers new insights into the interplay between higher homological algebra and covering theory in representation-finite contexts.
\end{abstract}

\maketitle

\tableofcontents

\section{Introduction}
Iyama introduces the concept of higher Auslander-Reiten theory in \cite{I} by examining a special subcategory of the module category known as $n$-cluster tilting subcategories, rather than considering the entire module category. He provides a higher version of Auslander’s correspondence for Artin algebras and related rings. In \cite{I2}, it is demonstrated that any finite $n$-cluster tilting subcategories correspond to $n$-Auslander algebras, which are defined as Artin algebras $\Gamma$ satisfying the condition:
\[{\rm domdim} \Gamma \geq n + 1 \geq {\rm gldim} \Gamma.\]

Here, ${\rm domdim}$ refers to the dominant dimension, and ${\rm gldim}$ denotes the global dimension. For further results in higher Auslander-Reiten theory, readers may refer to \cite{I, I2, I3}. \\ Additionally, an $n$-dimensional analogue of representation-finite hereditary algebras, called $n$-representation finite algebras, has been introduced and studied in \cite{DI}. An Artin algebra $\La$ is an $n$-representation finite algebra if there exists an $n$-cluster tilting $\La$-module.\\ Recently, Iyama and Solberg \cite{IS} have introduced the notions of $n$-precluster tilting subcategories and $\tau_n$-selfinjective algebras, which serve as generalizations of two seemingly different concepts: (finite) $n$-cluster tilting subcategories and $\tau$-selfinjective algebras. An Artin algebra $A$ is called $\tau_n$-selfinjective if it admits an $n$-precluster tilting module.

In \cite{IS}, a bijection between $n$-precluster tilting subcategories and $n$-minimal Auslander-Gorenstein algebras is established. This relationship serves as a higher-dimensional analogue of the Auslander-Solberg correspondence \cite{AS1}, as well as a Gorenstein analogue of the $n$-Auslander correspondence \cite{I2}. An Artin algebra $\Gamma$ is called an $n$-minimal Auslander-Gorenstein algebra if it satisfies the condition:
\[{\rm domdim} \Gamma \geq n + 1 \geq {\rm id} \Gamma,\]
where ${\rm id}$ denotes the injective dimension.

The covering technique has been introduced into the representation theory of Artin algebras and has been developed in a series of papers by K. Bongartz, P. Gabriel, C. Riedtmann \cite{BG, G, Rid}, and E. Green, A. de la Peña, and Martínez-Villa \cite{Gr, MD}, among others. This technique serves as an important tool for investigating the module category of an algebra. Specifically, it simplifies a problem related to the category of $ \La $-modules (for an algebra $ \La $) by reducing it to the category of modules over a simpler category $ \mathcal{C} $. This category often has a nice action by a group $ G $, allowing $ \La $ to be viewed as equivalent to the orbit category $ \mathcal{C}/G $.

One of the key results in this area, due to Gabriel \cite{G}, states that if $ \mathcal{C} $ is a locally bounded $ \k $-category (where $ \k $ is a field) and $ G $ is a group that acts freely on $ \text{ind} \mbox{-} \mathcal{C} $, then $ \mathcal{C} $ is locally representation-finite if and only if $ \mathcal{C}/G $ is also locally representation-finite. Here, $ \text{ind} \mbox{-} \mathcal{C} $ denotes the full subcategory of $ \text{mod}\mbox{-} \mathcal{C} $ whose objects provide a complete set of representatives for isoclasses of indecomposable modules in $ \text{mod} \, \mathcal{C} $, closed under the $ G $-action naturally induced from $ \mathcal{C} $ (for more details, see Section \ref{Pre}).

Gabriel's theorem has been further extended to a wide class of locally bounded categories, which includes locally representation-finite categories \cite{DLS, DS}. It has been demonstrated that if $ \mathcal{C} $ is a locally support-finite category with a free action of a group $ G $ on $ \text{ind} \, \mathcal{C} $, then $ \mathcal{C}/G $ is locally support-finite as well. Moreover, the Galois covering $ P: \mathcal{C} \to \mathcal{C}/G $ induces a bijection between the $ G $-orbits of isoclasses of indecomposable objects in $ \text{mod} \mbox{-} \mathcal{C} $ and the isoclasses of indecomposable objects in $ \text{mod} \mbox{-} (\mathcal{C}/G) $. For more information on this topic see e.g. \cite{DS2}, \cite{ As3}, \cite{ As4}, \cite{AHV}, \cite{ BL} and \cite{ BL2}.

Darp\"{o} and Iyama \cite{DI} utilized covering techniques to construct $n$-representation finite self-injective algebras as orbit algebras of the repetitive categories of certain algebras $\La$ with finite global dimension, referred to as $\nu_n$-finite algebras. It has been proven in \cite{DI} that the push-down functor maps every $ n $-cluster tilting subcategory of $ \text{mod} \mbox{-} \mathcal{C} $ (which meets certain conditions) to an $ n $-cluster tilting subcategory of $ \text{mod} \mbox{-} (\mathcal{C}/G) $, for a locally bounded category $ \mathcal{C} $ with a free action of a group $ G $ on $ \text{ind} \mbox{-}\mathcal{C} $. Using this, they have constructed $ n $-representation finite self-injective algebras as orbit algebras of the repetitive categories associated with $\nu_n $-finite algebras. Paquette, Prasad, and Wehlau \cite{PPW} have also examined the behavior of $ \tau $-tilting and $ \tau $-rigid subcategories of $ \text{mod} \mbox{-} \mathcal{C} $ under the push-down functor for sufficiently nice locally bounded $ G $-categories $ \mathcal{C} $.

In this paper, motivated by these results, we investigate the behavior of $ n $-precluster tilting subcategories under the push-down functor. We prove that if $ \CC $ is a locally support-finite category with a free action of a group $ G $ on $ \ind \CC $, then the push-down functor takes every $ G $-equivariant $ n $-precluster tilting subcategory of $ \mmod \CC $ to an $ n $-precluster tilting subcategory of $ \mmod(\CC/G)$ (see Theorem \ref{Main1}). Additionally, we show that if $ \CC $ is a locally bounded $ G $-category with a free $ G $-action on $ \ind \CC $, and $ \CV \subseteq P_{*}(\mmod \CC) $ is an $ n $-precluster tilting subcategory of $\mmod(\CC/G)$, then $ P_{*}^{-1}(\CV) = \{X \in \mmod \CC \mid P_{*}(X) \in \CV\} $ is a $ G $-equivariant $ n $-precluster tilting subcategory of $ \mmod \CC $ (see Theorem \ref{Main2}). These two theorems enable us to study the $ \tau_n $-selfinjectivity of Galois coverings over locally support-finite categories (see Corollary \ref{tau_nSelfinj}).

Moreover, following the work of \cite{IS}, we define an abelian category
$ \CA $ to be an $ n $-minimal Auslander-Gorenstein category if it satisfies the following conditions:
\begin{itemize}
	\item[$(i)$] ${\rm dom.dim } (\CA) \geq n+1$; that is, every projective object of $ \CA $ has an injective resolution whose first $ n $ terms are projective,
	\item[$(ii)$] every projective object has an injective dimension that is less than or equal to $ n+1 $.
\end{itemize}

An analog of the higher Auslander-Solberg correspondence for exact categories was presented and proved in \cite{Gre}. This result demonstrates that the map $\CU \mapsto \mmod \CU$ induces a bijective correspondence between equivalence classes of $n$-precluster tilting subcategories of $\mmod \CC$ (for some locally bounded $k$-category $\CC$) and equivalence classes of $n$-minimal Auslander-Gorenstein categories.

We prove that if $\CC$ is a locally bounded $G$-category with a free $G$-action on $\ind \CC$ such that $\CC/G$ is square-free, then $\mmod \CC$ is an $n$-minimal Auslander-Gorenstein category if and only if $\mmod(\CC/G)$ is also an $n$-minimal Auslander-Gorenstein category (see Proposition \ref{BonGab}). This result can be viewed as a generalization of Proposition 3.5 from \cite{BG}.

Let $\CC$ be a locally bounded $k$-category and $\CU$ be an $n$-precluster tilting subcategory of $\mmod \CC$. It can be shown that ${}^{\perp_{n-1}} \CU = \CU^{\perp_{n-1}}$, see Remark \ref{Remark3.12}. We define $\CZ(\CU) := {}^{\perp_{n-1}} \CU = \CU^{\perp_{n-1}}$. We demonstrate that there is an equivalence of categories between $\CZ(\CU)$ and $\Gprj \CU$, where $\Gprj \CU$ denotes the full subcategory of $\mmod \CU$ consisting of all Gorenstein projective objects (see Proposition \ref{equivalence}).

Using this result, we obtain that if $\CC$ is a locally support-finite $G$-category with a free $G$-action on $\ind \CC$, and $\CU$ is a $G$-equivariant $n$-precluster tilting subcategory of $\mmod \CC$, then the functor $\mmod P_*: \mmod \CU \to \mmod \CU/G$ serves as a Galois $G$-precovering of $n$-minimal Auslander-Gorenstein categories. This induces a Galois $G$-covering $\mmod P_*|: \Gprj \CU \to \Gprj \CU$, as discussed in Proposition \ref{Prop4.5}.

Zhou and Zhu in \cite{ZZ} introduced $\tau_n$-tilting objects for every $n$-cluster tilting subcategory of abelian category with enough projectives.
In Section \ref{tau_nTilting}, we examine support $(G, \tau_n)$-tilting pairs within the category $\mmod \CC$ and their image under the push-down functor for a locally bounded $G$-category $\CC$.

Let $\CC$ be such a category with a free $G$-action on $\ind \CC$. We consider an $n$-cluster tilting subcategory of $\mmod \CC$ such that $P_*(\CM)$ serves as an $n$-cluster tilting subcategory of $\mmod(\CC/G)$. Here, the notation $s(G, \tau_n)\mbox{-}\mathsf{tilt}(\CC)$ denotes the set of all support $(G, \tau_n)$-tilting $\CC$-modules in $\CM$, while $s \tau_n\mbox{-}\mathsf{tilt}(\CC/G)$ represents the set of all support $\tau_n$-tilting $\CC/G$-modules in $P_*(\CM)$ (see Definition \ref{t_n tilt}).

The push-down functor
\[P_*: s(G, \tau_n)\mbox{-}\mathsf{tilt}(\CC) \to s \tau_n\mbox{-}\mathsf{tilt}(\CC/G)\]
is a $G$-precovering. Furthermore, if $\CC$ is also locally support-finite, then $P_*$ becomes a $G$-covering map (see Proposition \ref{Prop-tau_n}).

This proposition allows us to establish a higher version of Theorem B from \cite{PPW}: A locally support-finite $G$-category $\CC$ with a free $G$-action on $\ind \CC$ is locally $(G, \tau_n)$-tilting finite if and only if $\CC/G$ is locally $\tau_n$-tilting finite (see Corollary \ref{Corr}).

Recall that a locally bounded $k$-category $\CC$ is called locally $(G, \tau_n)$-tilting finite, or locally $\tau_n$-tilting finite, if for every object $x$ in $\CC$, there exist only finitely many indecomposable $(G, \tau_n)$-rigid, or $\tau_n$-rigid, $\CC$-modules $M$ such that $M(x) \neq 0$.

\section{Preliminaries}\label{Pre}
In this paper, we denote by $\k$ a field. A category $\CC$ is referred to as a $\k$-linear category, or simply a $\k$-category, if for any two objects $x$ and $y$ in $\CC$, the set of morphisms $\CC(x,y)$ forms a $\k$-vector space. Additionally, the composition of morphisms in this category is required to be $\k$-bilinear.

\begin{definition}\label{LBB}({\cite[2.1]{BG}}) Let $\mathcal{C}$ be a $\k$-category. We say that $\mathcal{C}$ is locally bounded if it satisfies the following conditions:
\begin{itemize}
 \item[$(i)$] For each object $x \in \mathcal{C}$, the endomorphism algebra $\mathcal{C}(x,x)$ is local.
 \item[$(ii)$] Distinct objects in $\mathcal{C}$ are not isomorphic.
 \item[$(iii)$] For all objects $x, y \in \mathcal{C}$, the dimension of $\mathcal{C}(x,y)$ over $\k$ is finite; that is, $[\mathcal{C}(x,y) : \k] < \infty$.
 \item[$(iv)$] Each object $x \in \mathcal{C}$ has only finitely many objects $y \in \mathcal{C}$ such that either $\mathcal{C}(x,y) \neq 0$ or $\mathcal{C}(y,x) \neq 0$.
\end{itemize}
\end{definition}

\begin{definition}
Let $\CC$ be a locally bounded $\k$-category, and let $G$ be a group. We say that $\CC$ is a category with a $G$-action or a $G$-category if there is a group homomorphism $A: G \to \text{Aut}(\CC)$, where $\text{Aut}(\CC)$ is the group of $\k$-linear automorphisms of $\CC$.

A $\k$-linear functor $F$ is considered to belong to $\text{Aut}(\CC)$ if there exists a $\k$-linear functor $G: \CC \to \CC$ such that $F \circ G = 1$ and $G \circ F = 1$.

For convenience, for each $a \in G$ and each $x \in \CC$, we will denote the action of $a$ on $x$ as $ax$ instead of writing $A(a)(x)$. The action of $G$ on $\CC$ is called free if $ax \neq x$ for every object $x$ in $\CC$ and for every $a \neq 1$ in $G$.
\end{definition}

Throughout, $G$ denotes a group.

\begin{definition}\label{k-iso}
Let $\mathcal{C}$ and $\mathcal{C}'$ be locally bounded $\k$-categories, and let $\mathcal{C}$ be a $G$-category equipped with a free $G$-action. A $\k$-linear functor $F: \mathcal{C} \rightarrow \mathcal{C}'$ is called a Galois $G$-precovering with group $G$ if it satisfies the following conditions:
\begin{itemize}
\item[$(i)$] For every $a \in G$, $F \circ A(a) = F$.
\item[$(ii)$] For each $x \in \mathcal{C}$, $F^{-1}(Fx) = Gx$.
\item[$(iii)$] For each $x, y \in \mathcal{C}$, $F$ induces the following $\k$-module isomorphisms:
 \[ F^{(1)}_{y,x}: \bigoplus_{a\in G} \mathcal{C}(ax,y) \rightarrow \mathcal{C}'(F(x),F(y)), \]
 \[ F^{(2)}_{y,x}: \bigoplus_{a\in G} \mathcal{C}(x,ay) \rightarrow \mathcal{C}'(F(x),F(y)). \]
\end{itemize}
Additionally, if $F: {\rm Obj}(\mathcal{C}) \rightarrow {\rm Obj}(\mathcal{C'})$ is surjective, then $F$ is called a Galois $G$-covering.
\end{definition}

\begin{definition}
Let $\mathcal{C}$ be a $G$-category with a free $G$-action. The orbit category $\mathcal{C}/G$ of $\mathcal{C}$ by $G$ is defined as follows:
\begin{itemize}
\item[$(i)$] The class of objects consists of all orbits, denoted $Gx = \{ ax \mid a \in G \}$, for every object $x$ in $\mathcal{C}$.

 \item[$(ii)$] For each pair of objects $u, v \in \mathcal{C}/G$, the morphisms are defined as:
 \[\mathcal{C}/G(u,v) = \left\{ (f_{y,x})_{x \in u, y \in v} \in \prod_{x \in u, y \in v} \mathcal{C}(x,y) \ \bigg| \ af_{y,x} = f_{ay, ax} \ \text{for all } a \in G \right\}. \]

\item[$(iii)$] For two composable morphisms $u \st{f}\lrt v \st{g}\lrt w$ in $\mathcal{C}/G$, we define their composition as
 \[ gf = \left( \sum_{y \in v} g_{z,y} f_{y,x} \right)_{x \in u, z \in w}. \]
\end{itemize}
\end{definition}
\vspace{0.2cm}

There is a canonical functor $P : \CC \lrt \CC/G$ given by $P(x)=Gx$ and $P(f)= (\delta_{a , b} a f)_{(a , b) \in G \times G}$ for every $x, y \in \CC$ and for every $f \in \CC(x, y)$.
It is proved, in \cite{G}, that $P: \CC \lrt \CC/G$ is a Galois $G$-covering with group $G$.

\s\label{Functor}
We denote by $\Mod \CC$ the category of contravariant $\k$-linear functors from $\CC$ to $\Mod \k$, which is referred to as the category of (right) $\CC$-modules. Let $F$ and $G$ be objects in $\Mod \CC$. We denote the morphism set from $F$ to $G$ in $\Mod \CC$ as $\Hom_{\CC}(F,G)$.

For any object $x \in \CC$, we can apply the Yoneda lemma to show that $\CC(-,x)$ is a projective object in $\Mod \CC$. Furthermore, for any functor $F \in \Mod \CC$, there exists an epimorphism
$$\coprod_i \CC(-, x_i) \to F \to 0,$$
where $x_i$ ranges over a subclass of objects in $\CC$. Consequently, the collection $\{ \CC(-, x) \mid x \in \CC \}$ forms a generating set for $\Mod \CC$.

An object $F$ in $\Mod \CC$ is called finitely generated if there exists an integer $n$ and an epimorphism

\[\bigoplus_{i=1}^n \CC(-, x_i) \xrightarrow{\alpha} F \to 0, \]
for some objects $x_i$ in $\CC$. Additionally, if the kernel functor $\Ker \alpha$ is also finitely generated, then $F$ is referred to as finitely presented. The category of all finitely generated $\CC$-modules is denoted by $\mmod \CC$, and $ \text{ind} \mbox{-} \mathcal{C} $ denotes the full subcategory of $ \text{mod}\mbox{-} \mathcal{C} $ whose objects provide a complete set of representatives for isoclasses of indecomposable modules in $ \text{mod} \, \mathcal{C} $, closed under the $ G $-action naturally induced from $ \mathcal{C} $.

\begin{remark}\label{flength}
Let $\CC$ be a locally bounded $\k$-category and $ F $ be a finitely generated module over a category $ \CC $. Then $ F $ can be expressed as a quotient of a finite direct sum of the representable modules $ \CC(-,x) $, with the property that the sum $ \sum_{y \in \CC} [\CC(y,x): \k] $ is finite. Consequently, for every finitely generated $ \CC $-module $ F $, we have $ \sum_{x \in \CC} [F(x): \k] < \infty $. This implies that every finitely generated $ \CC $-module has finite length.

Furthermore, the converse is also true: if $ F $ is a $ \CC $-module satisfying $ \sum_{x \in \CC}[F(x): \k] < \infty $, then $ F $ is finitely generated. For more details, see \cite[2.2]{BG}.

Thus, a $ \CC $-module $ F $ is finitely generated if and only if it is finitely presented.
Additionally, the inequality $ \sum_{x \in \CC}[F(x): \k] < \infty $ for a finitely generated $ \CC $-module $ F $ indicates that $ F $ has finite support.
\end{remark}

\s \label{action}
Let $\CC$ be a $G$-category. The $G$-action on $\CC$ induces a $G$-action on $\Mod \CC$. Specifically, for each $a \in G$, we can define a $\k$-linear automorphism $\overline{A}(a): \Mod \CC \to \Mod \CC$ by
\[\overline{A}(a)(X) = X \circ A(a^{-1}),\]
for all $X \in \Mod \CC$. To simplify the notation, we denote ${}^a X := \overline{A}(a)(X)$, and for each morphism $\lambda: X \to Y$ in $\Mod \CC$, we write ${}^a \lambda$ in place of $\overline{A}(a)(\lambda)$. We say that $G$ acts freely on $\ind \CC$ if ${}^a M \ncong M$ for every $\CC$-module $M$ in $\ind \CC$ and for each $a \neq 1$ in $G$.

Note that for every $x \in \CC$, we have
\[{}^{a}\CC(-, x) = \CC(a^{-1}(-), x) \cong \CC(-, ax).\]

\s{\sc Push-down functor.}\label{push-down}
Let $\CC$ be a $G$-category with a free $G$-action. The canonical functor $P: \CC \lrt \CC/G$ induces a functor $P^{*}: \Mod (\CC/G) \lrt \Mod \CC$, given by $P^{*}(X) =X \circ P$, for every $X \in \Mod (\CC/G)$. This functor is called the pull-up of $P$. It is well known that $P^{*}$ possesses a left adjoint $P_{*}$, which is called the push-down of $P$. Indeed, $P_{*}: \Mod \CC \lrt \Mod (\CC/G)$ is defined as follows. Let $X \in \Mod \CC$, then for an object $u$ of $\CC/G$, $P_{*}(X)(u):= \oplus_{x \in u}X( x)$, and for a morphism $f:u \rt v$ in $\CC/G$, where $f=(f_{y,x })_{x \in u , y \in v}$, $(P_{*}X)(f)$ is given by the following commutative diagram
\[ \xymatrix@C-.5pc@R-.5pc{ (P_{*}X)(v) \ar[rr]^{(P_{*}X)(f)}\ar@{=}[d] & & (P_{*}X)(u)\ar@{=}[d] \\ \oplus_{y\in v} X( y) \ar[rr]^{(X(f_{y,x }))_{x,y }} & & \oplus_{x \in u}X( x).}\]
Also, if $\tau: X \lrt X'$ is a morphism in $\Mod \CC$, then for every $u \in {\rm Obj}(\CC/G)$, $(P_{*}\tau)_{u}$ is defined by the following commutative diagram
\[\xymatrix@C-.5pc@R-.5pc {(P_{*}X)(u) \ar[rr]^{(P_{*}\tau)_u}\ar@{=}[d] & & (P_{*}X')(u)\ar@{=}[d] \\ \oplus_{x \in u} X( x) \ar[rr]^{\oplus_{x \in u}\tau_{ x}} & & \oplus_{x \in u}X'( x).}\]
Note that for every $\CC$-module $X$ there is the canonical isomorphism $P^{*} P_{*} (X) \cong \oplus_{a \in G} {}^a X$, see \cite[Lemma 3.2]{G}.

$\CC$ is called locally representation-finite, provided that for each object $x$ of $\CC$ there are only finitely many modules $M$ in $\ind \CC$, up to isomorphism, with $M(x)\neq 0$. We need the following fundamental theorem which is due to Gabriel \cite{G}.

\begin{theorem} \cite[Proposition 3.1, Lemma 3.5, Theorem 3.6]{G} \label{Corres}
Let $\CC$ be a locally bounded $\k$-category and let $G$ be a group acting freely on $\ind \CC$. We can consider the following statements:

\begin{itemize}
\item[$(i)$] The push-down functor $P_{*}:\mmod \CC \to \mmod (\CC/G)$ serves as a Galois $G$-precovering. It maps any indecomposable $\CC$-module to an indecomposable $\CC/G$-module.

\item[$(ii)$] If $\CC$ is locally representation-finite, then there is a Galois $G$-precovering
$P_*: \ind \CC \lrt \ind (\CC/G)$. Indeed,
 the push-down functor induces a bijection between the set of $G$-orbits of isomorphism classes of modules in $\ind \CC$ and the set of isomorphism classes of modules in $\ind(\CC/G)$.
\end{itemize}
\end{theorem}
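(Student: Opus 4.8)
The engine of the whole argument is the canonical isomorphism $P^{*}P_{*}X \cong \bigoplus_{a \in G}{}^{a}X$ recorded in \ref{push-down}, together with the adjunction $(P_{*},P^{*})$ and the fact (Remark \ref{flength}) that every finitely generated $\CC$-module has finite support. The plan is to first verify the three defining properties of a Galois $G$-precovering for $P_{*}\colon \mmod\CC \to \mmod(\CC/G)$ from Definition \ref{k-iso}, then deduce that $P_{*}$ preserves indecomposability, and finally, under local representation-finiteness, upgrade ``precovering'' to a genuine bijection on indecomposables.

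For part $(i)$, property $(i)$ of Definition \ref{k-iso} is immediate from the object-wise formula $P_{*}(X)(u)=\bigoplus_{x\in u}X(x)$: since $({}^{a}X)(x)=X(a^{-1}x)$ and $x\mapsto a^{-1}x$ permutes each $G$-orbit $u$, one reads off a natural isomorphism $P_{*}({}^{a}X)\cong P_{*}(X)$. Property $(iii)$ is where the adjunction does the work: for finitely generated $X,Y$ I would compute
\[
\Hom_{\CC/G}(P_{*}X,P_{*}Y)\;\cong\;\Hom_{\CC}(X,P^{*}P_{*}Y)\;\cong\;\Hom_{\CC}\Big(X,\bigoplus_{a\in G}{}^{a}Y\Big)\;\cong\;\bigoplus_{a\in G}\Hom_{\CC}(X,{}^{a}Y),
\]
where the last step uses that $X$ has finite support, so that $\Hom_{\CC}(X,-)$ commutes with the (locally finite) coproduct. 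Reindexing via the identity $\Hom_{\CC}(X,{}^{a}Y)\cong\Hom_{\CC}({}^{a^{-1}}X,Y)$ yields the required isomorphism $\bigoplus_{a\in G}\Hom_{\CC}({}^{a}X,Y)\xrightarrow{\ \sim\ }\Hom_{\CC/G}(P_{*}X,P_{*}Y)$, and I would check that it is induced by $P_{*}$ by tracing the unit of the adjunction.

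Next I would prove that $P_{*}$ sends indecomposables to indecomposables. Setting $Y=X$ indecomposable above gives a $\k$-algebra isomorphism $\End_{\CC/G}(P_{*}X)\cong\bigoplus_{a\in G}\Hom_{\CC}(X,{}^{a}X)$ with convolution (crossed-product) multiplication. The summand at $a=1$ is the local ring $\End_{\CC}(X)$; for $a\neq 1$ the free action forces ${}^{a}X\ncong X$, so every map in $\Hom_{\CC}(X,{}^{a}X)$ is a non-isomorphism, and any product of two such cross-terms factors through a non-isomorphic indecomposable and hence lands in $\rad\End_{\CC}(X)$. Thus the augmentation $\phi\mapsto \overline{\phi_{1}}$ onto the division ring $\End_{\CC}(X)/\rad\End_{\CC}(X)$ is a surjective ring homomorphism, and a standard radical computation identifies $\rad\End_{\CC}(X)\oplus\bigoplus_{a\neq 1}\Hom_{\CC}(X,{}^{a}X)$ as the Jacobson radical of the finite-dimensional algebra $\End_{\CC/G}(P_{*}X)$, which is therefore local; i.e. $P_{*}X$ is indecomposable. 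Property $(ii)$ of Definition \ref{k-iso}, namely $P_{*}^{-1}(P_{*}X)=GX$, then follows: an isomorphism $P_{*}X\cong P_{*}Y$ corresponds under the Hom-isomorphism to an element of $\bigoplus_{a}\Hom_{\CC}({}^{a}X,Y)$ which, modulo radical maps, must have an invertible component, forcing $Y\cong{}^{a}X$ for some $a$. This completes $(i)$.

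For part $(ii)$, injectivity of the induced map from $G$-orbits of indecomposables of $\mmod\CC$ to indecomposables of $\mmod(\CC/G)$ is exactly property $(ii)$ just proved, so the crux is density: every $N\in\ind(\CC/G)$ is isomorphic to some $P_{*}X$ with $X\in\ind\CC$. Here local representation-finiteness enters. I would consider the equivariant module $P^{*}N$, which satisfies ${}^{a}P^{*}N\cong P^{*}N$ because $P\circ A(a)=P$; local representation-finiteness guarantees that $P^{*}N$ decomposes as a direct sum of finite-length indecomposable $\CC$-modules, on which $G$ acts by freely permuting the summands. Choosing any summand $X$, the adjunction identifies the inclusion $X\hookrightarrow P^{*}N$ with a morphism $\theta\colon P_{*}X\to N$; since both $P_{*}X$ (by part $(i)$) and $N$ are indecomposable, I would show $\theta$ is an isomorphism by checking that $P^{*}\theta$ restricted to the $a=1$ summand recovers the split inclusion $X\hookrightarrow P^{*}N$, so that $\theta$ is neither zero nor radical. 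Controlling the decomposition of the infinite-support module $P^{*}N$ and extracting a single finitely generated preimage is the main obstacle, and is precisely where the hypothesis ``locally representation-finite'' is indispensable (for general locally bounded $\CC$ one obtains only a precovering). Combining injectivity and density yields the asserted bijection on $G$-orbits.
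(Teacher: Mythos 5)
The statement you were asked to prove is not proved in the paper at all: it is imported as a citation of Gabriel \cite[Proposition 3.1, Lemma 3.5, Theorem 3.6]{G}, so your proposal can only be measured against Gabriel's original argument. Your outline does reproduce that argument's skeleton correctly: the Hom-isomorphism via the adjunction $P_*\dashv P^*$, the formula $P^*P_*Y\cong\bigoplus_{a\in G}{}^aY$, and the fact that finite support of $X$ lets $\Hom_{\CC}(X,-)$ pass through the coproduct are all exactly right, and your derivation of property $(ii)$ of Definition \ref{k-iso} (an isomorphism $P_*X\cong P_*Y$ must have an invertible component because $\End_{\CC}(X)$ is local) is sound.

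There is, however, a genuine gap at the heart of the indecomposability claim. What your argument actually establishes is that $I:=\rad\End_{\CC}(X)\oplus\bigoplus_{a\neq 1}\Hom_{\CC}(X,{}^aX)$ is a two-sided ideal of $\End_{\CC/G}(P_*X)$ with division-ring quotient; this does \emph{not} imply locality. In $A=\k\times\k$ the ideal $I=\k\times 0$ is two-sided, consists of non-units, and has division-ring quotient, yet $A$ is not local. The missing idea --- and the point where finite length genuinely enters --- is that your $I$ is a \emph{nil} ideal: every component of every element of $I$ is a non-isomorphism between indecomposable modules of the common finite length $\ell(X)$, so by the Harada--Sai lemma any product of $2^{\ell(X)}$ elements of $I$ vanishes; only a nil (hence radical-contained) ideal with semisimple quotient identifies $I$ with the Jacobson radical and forces $\End_{\CC/G}(P_*X)$ to be local. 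A second, smaller soft spot is the end of part $(ii)$: the assertion that $\theta$ is ``neither zero nor radical'' because $P^*\theta$ restricts to a split inclusion needs justification, since nothing you proved shows that $P^*$ reflects radical morphisms (note that the Hom-isomorphism is unavailable for the pair $(P_*X,N)$ precisely because $N$ is not yet known to be in the image of $P_*$). A cleaner finish: split $P^*N=X\oplus C$ (Azumaya, using that all indecomposable summands have local endomorphism rings), apply $P_*$ and use $P_*P^*N\cong\bigoplus_{a\in G}N$, so that the indecomposable $P_*X$ is a direct summand of a direct sum of copies of $N$ and hence $P_*X\cong N$ by Krull--Schmidt--Azumaya. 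Finally, you are right that the decomposition of the infinitely supported module $P^*N$ into finite-length indecomposables is where local representation-finiteness is indispensable, but be aware that this decomposition theorem is itself the deep content of Gabriel's Theorem 3.6 rather than a routine consequence of the hypothesis, so invoking it leaves the hardest part of the cited result unproved.
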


We recall that for a $\CC$-module $M$, the support of $M$, denoted as ${\supp} M$, refers to the full subcategory of $\CC$ consisting of all objects $x$ in $\CC$ such that $M(x) \neq 0$. Following \cite{DS2}, a locally bounded $\k$-category $\CC$ is termed locally support-finite if, for every object $x \in \CC$, the full subcategory formed by the points of all $\supp M$, where $M \in \ind \CC$ and $M(x) \neq 0$, is finite. Clearly, every finite category is locally support-finite.
By Remark \ref{flength}, every finitely generated $\CC$-module has a finite support. Hence every locally representation-finite $\k$-category is locally support-finite. We should mention that there are locally representation-infinite $\k$-categories which are locally support-finite; the reader may consult \cite{DS}.\\

\begin{sproposition}\cite[Theorem]{DLS}\label{loc-supp-fin} Let $\CC$ be a locally support-finite category, and let $G$ act freely on $\ind \CC$. Then $\CC/G$ is also locally support-finite, and the push-down functor $P_{*}:\mmod \CC \to \mmod (\CC G)$ is dense.
\end{sproposition}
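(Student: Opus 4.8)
Throughout write $\CB=\CC/G$, and for an object $x\in\CC$ put
$B(x)=\bigcup\{\supp M\mid M\in\ind\CC,\ M(x)\neq0\}$, which is a finite set of objects precisely because $\CC$ is locally support-finite. The one elementary fact that drives everything is
\[(\ast)\qquad L\in\ind\CC,\ z\in\supp L\ \Longrightarrow\ \supp L\subseteq B(z),\]
valid because $L$ is itself one of the modules defining $B(z)$. In particular every indecomposable $\CC$-module is finitely supported. I would prove density first and then derive local support-finiteness of $\CB$ as a short consequence.

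For density, fix an indecomposable $N\in\mmod\CB$; since every object of $\mmod\CB$ is a finite direct sum of indecomposables (Remark \ref{flength}) and $P_*$ preserves finite direct sums, it suffices to realise each such $N$ as a push-down. Consider the pull-up $W=P^{*}N$, whose support is the $G$-invariant set $T=P^{-1}(\supp N)$, a finite union of $G$-orbits since $\supp N$ is finite. The crux is the following \textbf{Sub-claim}: $W=P^{*}N$ decomposes as a direct sum $\bigoplus_i L_i$ of finitely generated indecomposable $\CC$-modules. Granting this, each $L_i$ is finitely supported by $(\ast)$, so $P_*L_i$ is a finitely supported, hence finitely generated, \emph{indecomposable} $\CB$-module (Theorem \ref{Corres}$(i)$, using the freeness of the $G$-action). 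As $P_*$ preserves direct sums, $\bigoplus_i P_*L_i\cong P_*P^{*}N$, and a direct computation analogous to the one in \ref{push-down} shows $P_*P^{*}N\cong N^{(G)}$, the direct sum of $|G|$ copies of $N$. Comparing $\bigoplus_i P_*L_i\cong N^{(G)}$ via the Krull--Remak--Schmidt--Azumaya theorem (all summands have local endomorphism rings, being of finite length) forces $P_*L_i\cong N$ for every $i$; in particular $N\cong P_*L_{i_0}$ with $L_{i_0}$ finitely generated, which is density.

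The Sub-claim is the real obstacle, and the only place where local support-finiteness of $\CC$ is truly used. The plan is to decompose $W$ along its support-connectedness: form the graph on the objects of $T$ with an edge $z\!-\!z'$ whenever some morphism between $z$ and $z'$ acts nonzero on $W$, so that $W=\bigoplus_c W_c$ splits as a $\CC$-module indexed by the connected components $c$ (cross-component actions vanish by construction). Each $W_c$ is support-connected, and it remains to show that $W_c$ is finite-dimensional; a finite-length $W_c$ then decomposes into indecomposables, each finitely supported by $(\ast)$, giving the Sub-claim. The hard part will be exactly the finiteness of the components: local boundedness of $\CC$ makes the graph locally finite, but that alone does not bound component size, since a connected, locally support-finite, representation-infinite $\CC$ exists. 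What must rescue us is that $W$ lives only on the finite union of orbits $T$ lying over the \emph{finite} set $\supp N$; combining this with the uniform containment $(\ast)$ — equivalently, passing to a finite convex subcategory of $\CC$ that captures $N$ and reducing to a finite Galois covering — should yield finite components. This interplay between the finiteness of $\supp N$ and the local support-finiteness of $\CC$ is the technical heart of the statement, and is the content of the cited theorem \cite{DLS}.

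Finally, local support-finiteness of $\CB$ follows cleanly from density. Fix $Gx\in\CB$ and let $N\in\ind\CB$ with $N(Gx)\neq0$; by density $N\cong P_*M$ for some $M\in\ind\CC$. From $P_*M(Gz)=\bigoplus_{w\in Gz}M(w)$ one reads off $\supp P_*M=P(\supp M)$, and $N(Gx)\neq0$ means $\supp M$ meets the orbit $Gx$, say $y\in\supp M\cap Gx$. By $(\ast)$ we have $\supp M\subseteq B(y)$, and writing $y=ax$ gives $B(y)=aB(x)$, hence $P(B(y))=P(B(x))$ because $P\circ A(a)=P$. Therefore $\supp N=P(\supp M)\subseteq P(B(x))$, a fixed finite set independent of $N$. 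Thus the union of the supports of all indecomposable $\CB$-modules not vanishing at $Gx$ is contained in the finite set $P(B(x))$, which is exactly the local support-finiteness of $\CC/G$.
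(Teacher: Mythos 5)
Note first that the paper itself offers no proof of this proposition: it is imported from \cite{DLS} by citation, so your attempt is measured against the literature rather than an internal argument. Two of your three pieces are sound. The closing paragraph, deducing local support-finiteness of $\CC/G$ from density via $\supp M\subseteq B(y)$ and $P(B(ax))=P(B(x))$, is correct; and the reduction of density to your Sub-claim --- using $P_*P^*N\cong N^{(G)}$, preservation of indecomposability (Theorem \ref{Corres}), finite length (Remark \ref{flength}), and Krull--Remak--Schmidt--Azumaya --- is a legitimate skeleton. The problem is that the Sub-claim is not a technical lemma but the entire content of the theorem, and the mechanism you propose for it --- split $W=P^*N$ along connected components of the support graph and argue the components are finite --- is not merely unproved but false. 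Take $\CC$ to be the repetitive category of $\k$: objects $x_i$, $i\in\Z$, arrows $\alpha_i\colon x_i\to x_{i+1}$ with $\alpha_{i+1}\alpha_i=0$, so that a $\CC$-module is exactly a complex of $\k$-vector spaces. This $\CC$ is locally representation-finite (the indecomposables are the simples $S_i$ and the two-term modules $M_i=(\k\xrightarrow{\,1\,}\k)$ concentrated at $x_i,x_{i+1}$), hence locally support-finite; $G=\Z$ acts freely by shift on $\ind\CC$, and $\CC/G\cong\k[t]/(t^2)$. Let $N$ be the regular module $\k[t]/(t^2)$, indecomposable with one-point support. Then $W=P^*N$ is the complex $\cdots\to\k^2\xrightarrow{d}\k^2\xrightarrow{d}\k^2\to\cdots$ with every differential of rank one; every $\alpha_i$ acts nonzero on $W$, so your support graph on $T=P^{-1}(\supp N)$ (which is all of $\CC$) is connected and infinite, and your decomposition $W=\bigoplus_c W_c$ consists of the single infinite-dimensional summand $W$ itself. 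Finiteness of components therefore fails even with $\supp N$ a single object and $\CC$ locally representation-finite; no interplay between the finiteness of $\supp N$ and local support-finiteness can repair a claim that is false.

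Note that the Sub-claim itself is true in this example: $d^2=0$ and $\operatorname{rk}d=1$ force $W$ to be an exact complex, hence $W\cong\bigoplus_{i\in\Z}M_i$, and indeed $P_*M_i\cong N$. But this decomposition cuts across the support-connected structure and cannot be read off from your graph: what is needed is a decomposition theory for (possibly infinite-dimensional) modules over locally support-finite categories, in which finitely generated direct summands are extracted locally using the finite-dimensionality of the restrictions $W|_{B(x)}$ --- this is where local support-finiteness genuinely enters, and it is exactly what Dowbor--Skowro\'nski establish (see \cite{DS2}, \cite{DLS}). This is precisely the step you defer with ``should yield finite components \dots is the content of the cited theorem \cite{DLS}.'' Since the cited theorem is the statement being proved, that deferral is circular, and your proof has a genuine gap at its only hard point.
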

\s \label{lifting} Let $\CC$ be a $G$-category with a free $G$-action on $\ind \CC$. Based on Theorem \ref{Corres}, the functor $P_{*}: \mmod \CC \to \mmod(\CC/G)$ is a Galois $G$-precovering. Consequently, for any two finitely generated $\CC$-modules $X$ and $Y$, $P_{*}$ induces the following $\k$-isomorphism
\[\bigoplus_{a \in G} \Hom_{\mathcal{C}}(X,{}^a Y) \rightarrow \Hom_{\mathcal{C}/G}(P_{*}(X), P_{*}(Y)).\]

Thus, every morphism $\theta: P_{*}(X) \to P_{*}(Y)$ can be expressed as $\theta = \sum_{a \in G} P_{*}(f_a)$, where $f_a: X \to {}^a Y$, and only finitely many of these morphisms $f_a$ are non-zero. In this context, we say that $(f_a: X \to {}^a Y)_a$ is a lifting of $\theta$ along $P_{*}$.

\section{Induced Galois $G$-(pre)covering on subcategories}
Throughout this paper, $\CC$ denotes a locally bounded $G$-category. A full subcategory $\CU$ of $\mmod \CC$ is called $G$-equivariant if ${}^a \CU = \CU$ for all $a \in G$. Let us recall the notion of $n$ -Auslander-Reiten translation on $\mmod \CC$.

Let $D:= \Hom_{\k}(-, {\k}) \colon \Mod {\k} \to \Mod {\k}$ be the usual ${\k}$-dual functor. Then it induces the following contravariant functors
\[D:= D \circ (-):\Mod \CC \lrt \Mod \CC^{\rm{op}}\ \text{and} \ D:= D \circ (-):\Mod \CC^{\rm{op}} \lrt \Mod \CC \]
defined by $M \mapsto DM:= D \circ M$, which are also denoted by $D$.
It follows from \cite[Proposition 2.6]{AR3} that the above functors can be restricted to the dualities
\[D:\mmod \CC\lrt \mmod \CC^{\rm{op}}\ \text{and} \ D:\mmod \CC^{\rm{op}}\lrt \mmod \CC,\]
respectively.

Let $\Omega: \underline{\rm mod}\mbox{-}\CC \lrt \underline{\rm mod}\mbox{-}\CC$, resp. $\Omega^-: \overline{\rm mod}\mbox{-}\CC \lrt \overline{\rm mod}\mbox{-}\CC$, denote the syzygy functor, resp. cosyzygy functor, where $\underline{\rm mod}\mbox{-}\CC$, resp. $\overline{\rm mod}\mbox{-}\CC$, is the stable category of $\mmod\CC$ modulo projective objects, resp. the costable category of $\mmod\CC$ modulo injective objects.

For $n\geq 1$, $n$-Auslander-Reiten translations are defined in \cite{I} as follows
\begin{align*}
\tau_n:= \tau \Omega^{n-1}: \underline{\rm mod}\mbox{-}\CC \lrt \overline{\rm mod}\mbox{-}\CC, \\
\tau^-_n:= \tau^-\Omega^{-(n+1)} : \overline{\rm mod}\mbox{-}\CC \lrt \underline{\rm mod}\mbox{-}\CC.
\end{align*}
To compute, for every functor $X$ of $\mmod \CC$ take a projective resolution and an injective resolution
\[ P_n \st{f} \lrt P_{n-1} \lrt \cdots \lrt P_0 \lrt X \lrt 0 \]
and
\[0 \lrt X \lrt I^0 \lrt \cdots \lrt I^{n-1} \st{g} \lrt I^n\]
respectively. Then
\[ \tau_n X = \tau(\Ker f) \ \ \text{and} \ \ \tau_n^- X = \tau^-(\Coker g).\]

\subsection{Induced Galois $ G$-(pre)covering on $ \CP_n$ and $ \CI_n$}
Let $n \geq1$. A subcategory $\CC$ of $\mmod \La$ is called $n$-cluster tilting if $\CU$ is functorially finite and $\CU = {}^{\perp_{n-1}}\CU = \CU^{\perp_{n-1}}.$
In \cite[Definition 3.2]{IS} the authors introduced the notion of an $n$-precluster tilting subcategory as a generalization of an $n$-cluster tilting subcategory.
A subcategory $\CU$ of $\mmod \La$ is called an $n$-precluster tilting subcategory if it is a functorially finite generator-cogenerator subcategory of $\mmod \La$, and satisfies the following conditions.
\begin{itemize}
 \item [$(i)$] $\tau_n(\CU) \subseteq \CU$ \ and \ $\tau_n^-(\CU) \subseteq \CU$,
 \item [$(ii)$] $\Ext^i_{\La}(\CU, \CU) =0$, for $0 <i <n$.
\end{itemize}
If $\CC$ admits an additive generator $M$, we say that $M$ is an $n$-precluster tilting module.\\

The following definition is motivated by the notion of an $n$-precluster tilting subcategory \cite{IS}.

\begin{sdefinition}\label{n-precluster}
An additively closed $G$-equivariant subcategory $\CU$ of $\mmod \CC$ is called $n$-precluster tilting subcategory if
	\begin{itemize}
		\item[$(i)$] $\CU$ is a generator-cogenerator subcategory of $\mmod \CC$ (i.e. for every object $x$ of $\CC$, ${\CC}(-, x)$ and $D{\CC}(x, -)$ belong to $\CU$);
		\item[$(ii)$] $\tau_n(\CU) \subseteq \CU$ and $\tau_n^-(\CU)\subseteq \CU$;
		\item [$(iii)$] $\Ext^i_{\CC}(\CU, \CU)=0$ for all $0<i<n$;
		\item[$(iv)$] $\CU$ is functorially finite.
	\end{itemize}
Moreover, if there is a $\CC$-module $M$ such that $\CU= \add M$, resp. $\CU= \add \{{}^a M \mid a \in G\}$, then $M$ is called an $ n$-preculster tilting $\CC$-module, resp. $(G, n)$-preculster tilting $\CC$-module.
\end{sdefinition}
$\tau_n$-selfinjective algebras were introduce in \cite{IS} as an algebra with an $n$-precluster tilting module.

\begin{sdefinition}
A locally bounded $G$-catgeory $\CC$ is called $\tau_n$-selfinjective if $ \CC$ admits an $ n$-preculster tilting module.
\end{sdefinition}

We have the following two subcategories of $\mmod \CC$

\[ \CP_n(\CC)= \add \{\tau_n^{-i} ({\CC}(-,x))\mid i\geq 0 , \ x \in \CC\}\]
\[ \CI_n(\CC)= \add \{ \tau_n^i(D {\CC}(x, -))\mid i\geq 0, \ x \in \CC\}\]

Moreover, for a subcategory $\CX$ of $\mmod \CC$ , the following full subcategory can be defined:
\[\CX^{\perp_{n-1}}=\{ M \in \mmod \CC \mid \Ext_{\CC}^i(\CX, M)=0 \ \ \text{for all} \ \ 0<i<n\},\]
\[{}^{\perp_{n-1}} \CX=\{ M \in \mmod \CC \mid \Ext_{\CC}^i(M, \CX)=0 \ \ \text{for all} \ \ 0<i<n\}.\]

Now we are ready to prove the first main result of this section.

\begin{stheorem}\label{P_n(C)}
Let $\CC$ be a locally bounded $G$-category and $G$ acts freely on $\ind \CC$. Then
\begin{itemize}
	\item[$(i)$] the push-down functor can be restricted to the following functors
	\[P_*: \CP_n(\CC) \lrt \CP_n(\CC/G), \ \text{and} \ P_*: \CI_n (\CC) \lrt \CI_n(\CC/G)\]
	that are dense;
	\item[$(ii)$] the push-down functor induces a Galois $G$-covering
	\[P_*: \ind (\CP_n(\CC))\lrt \ind (\CP_n(\CC/G)).\]
	In particular, $\ind (\CP_n(\CC/G))\simeq (\ind (\CP_n(\CC)))/G$.
\end{itemize}

\end{stheorem}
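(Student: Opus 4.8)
The plan is to reduce the whole statement to one compatibility: that the push-down functor commutes with the higher translations, i.e. there are natural isomorphisms $P_*\tau_n^-\cong\tau_n^- P_*$ and $P_*\tau_n\cong\tau_n P_*$ on $\mmod\CC$. Before addressing this, I would record the elementary properties of $P_*$ on which everything rests. From its objectwise definition as a finite direct sum, $P_*$ is exact; by the formula $P_*(X)(u)=\bigoplus_{x\in u}X(x)$ it carries the representable $\CC(-,x)$ to $(\CC/G)(-,Px)$ and, via the compatibility $D({}^aX)\cong{}^a(DX)$ together with the analogous push-down on $\CC^{\op}$, it carries $D\CC(x,-)$ to $D(\CC/G)(Px,-)$. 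Hence $P_*$ preserves projectives and injectives. Since $\tau_n^-=\tau^-\Omega^{-(n+1)}$ and $\tau_n=\tau\Omega^{n-1}$ are assembled from $\tau,\tau^-$ and (co)syzygies, the desired commutativity factors through the three separate statements $P_*\Omega\cong\Omega P_*$, $P_*\Omega^-\cong\Omega^- P_*$ and $P_*\tau\cong\tau P_*$ (with its dual).

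Granting this, part $(i)$ is nearly formal. For a generator of $\CP_n(\CC)$ I compute
\[P_*\bigl(\tau_n^{-i}(\CC(-,x))\bigr)\cong\tau_n^{-i}\bigl(P_*\CC(-,x)\bigr)\cong\tau_n^{-i}\bigl((\CC/G)(-,Px)\bigr)\in\CP_n(\CC/G),\]
and since $P_*$ is additive this gives $P_*(\CP_n(\CC))\subseteq\CP_n(\CC/G)$; the case of $\CI_n$ is dual. For density I use that $P\colon\CC\to\CC/G$ is surjective on objects, so every object of $\CC/G$ is $Px$ and every generator $\tau_n^{-i}((\CC/G)(-,u))$ of $\CP_n(\CC/G)$ equals $P_*(\tau_n^{-i}(\CC(-,x)))$. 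Thus an arbitrary $W\in\CP_n(\CC/G)$ is a direct summand of some $P_*(V_0)$ with $V_0\in\CP_n(\CC)$. By Theorem \ref{Corres}$(i)$ the functor $P_*$ sends indecomposables to indecomposables, so writing $V_0$ as a sum of indecomposables and invoking Krull--Schmidt in $\mmod(\CC/G)$ (valid by Remark \ref{flength}), $W\cong P_*(V)$ for the sub-sum $V\in\CP_n(\CC)$ of the corresponding summands of $V_0$; this proves density, and the $\CI_n$ case is symmetric.

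For part $(ii)$, Theorem \ref{Corres}$(i)$ confines $P_*$ to a functor $\ind\CP_n(\CC)\to\ind\CP_n(\CC/G)$, and the density just proved makes it surjective on objects. The three conditions of Definition \ref{k-iso} are inherited from the fact, also contained in Theorem \ref{Corres}$(i)$, that $P_*\colon\mmod\CC\to\mmod(\CC/G)$ is already a Galois $G$-precovering: the Hom-isomorphisms $\bigoplus_{a\in G}\Hom_\CC(X,{}^aY)\xrightarrow{\sim}\Hom_{\CC/G}(P_*X,P_*Y)$ restrict to objects of $\CP_n$, and the fibre condition holds because the $G$-orbit of an indecomposable of $\CP_n(\CC)$ is exactly its $P_*$-preimage. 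The closing bijection $\ind(\CP_n(\CC/G))\simeq(\ind\CP_n(\CC))/G$ is then the object-level reading of this covering, precisely as in Theorem \ref{Corres}$(ii)$.

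The main obstacle is the commutativity $P_*\tau\cong\tau P_*$. Since $\tau=D{\rm Tr}$ is computed from a minimal projective presentation followed by the transpose and the $\k$-dual, two points need care. First, I must verify that $P_*$ sends a minimal projective presentation of $X$ to a minimal one of $P_*X$; minimality could in principle be lost when the orbit sum amalgamates radical maps, so this step uses that $P_*$ preserves indecomposable projectives together with the covering Hom-isomorphism to keep the differentials inside the radical. Second, I must intertwine the transpose with $P_*$ through $D$, which is legitimate because $D$ is $G$-equivariant. Once minimality is secured, applying $P_*$ termwise to the defining sequences yields the isomorphism for $\tau$ and, dually, for $\tau^-$, while $P_*\Omega\cong\Omega P_*$ and $P_*\Omega^-\cong\Omega^- P_*$ follow at once from exactness and the preservation of projectives and injectives.
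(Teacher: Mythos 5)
Your proposal is correct and takes essentially the same route as the paper: reduce everything to the commutation of $P_*$ with $\tau_n$ and $\tau_n^-$, get the inclusions from $P_*\CC(-,x)\cong \CC/G(-,Px)$ and exactness/preservation of projectives, prove density using surjectivity of $P$ on objects together with preservation of indecomposables and Krull--Schmidt, and deduce $(ii)$ by restricting the Galois $G$-precovering of Theorem \ref{Corres}$(i)$. The only divergence is that where you sketch a proof that $P_*$ commutes with $\tau$ (via preservation of minimal presentations, radicals, and $G$-equivariance of $D$), the paper simply cites \cite[3.2]{BG} for the fact that the push-down preserves the transpose; your sketch is a correct outline of what that reference establishes.
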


\begin{proof}
$(i)$ It is enough to show that 	$P_{*}(\CP_n(\CC))\simeq\CP_n(\CC/G)$ and
$P_{*}(\CI_n(\CC))\simeq\CI_n(\CC/G)$. To this end, consider a $\CC$-module $M$ together with a projective resolution
\[ \cdots \lrt P_2 \st{g_2}\lrt P_1 \st{g_1} \lrt P_0 \st{g_0} \lrt M \lrt 0.\]

Since $P_{*}$ is an exact functor which takes projective $\CC$-modules to projective $\CC/G$-modules, we have the following projective resolution of $P_{*}(M)$
\[ \cdots \lrt P_{*}(P_2) \st{P_{*}(g_2)} \lrt P_{*}(P_1) \st{P_{*}(g_1)} \lrt P_{*}(P_0) \st{P_{*}(g_0)} \lrt P_{*}(M) \lrt 0.\]

In view of [3.2]\cite{BG}, the push-down functor $P_{*}$ preserves the transpose functor. Therefore, $P_{*}(\tau_n M)\cong \tau_n P_{*}(M)$ and $P_{*}(\tau^-_n M)\cong \tau^-_n P_{*}(M)$.
		
It follows directly from the definition that, for each $x \in \CC$, $P_{*}\CC(-,x)\cong \CC/G(-, Px)$. This fact together with the above argument implies that
\[P_{*}(\CP_n(\CC)) \subseteq \CP_n(\CC/G) \ \text{and} \ P_{*}(\CI_n(\CC))\subseteq \CI_n(\CC/G).\]

For the converse inclusion observe that any indecomposable projective $\CC/G$-module is of the form $\CC/G(-, Px)$, because $P: \CC \lrt \CC/G$ is a Galois $G$-covering functor.
		
 Let $X$ be an object of $\CP_n(\CC/G)$. First we assume that $X$ is of the following form
\[ \oplus_{j=1}^n \tau_n^{-i_j} (\CC/G(-, Px_j)).\]
We have the following isomorphisms
		\[ \begin{array}{llll}
		 	X & = \oplus_{j=1}^n \tau_n^{-i_j} (\CC/G(-, Px_j))\\
		 	 	& \cong \oplus_{j=1}^n \tau_n^{-i_j} (P_*(\CC(-, x_j))) \\
		 	 	& \cong \oplus_{j=1}^n P_*( \tau_n^{-i_j} (\CC(-, x_j)))\\
		 	 	& \cong P_*( \oplus_{j=1}^n \tau_n^{-i_j} (\CC(-, x_j))).
		 \end{array}\]
Observe that the last two isomorphisms follows from the fact that the push-down functor $P_*$ commutes with $\tau_n^-$ and direct sums. That is $X$ lies in $P_*(\CP_n(\CC))$. Since the push-down functor $P_*$ preserves direct summand, this proof shows that the functor $P_*: \CP_n(\CC) \lrt \CP_n(\CC/G)$ is dense.

$(ii)$ In view of part $(i)$, we can take
$\ind (\CP_n(\CC/G))=\{P_*(X) \mid X \in \ind (\CP_n(\CC))\}.$ Since $P_*: \CP_n(\CC) \lrt \CP_n(\CC/G)$ is $G$-precovering, by \cite[Lemma 2.6]{BL}, $P_*$ sends decomposable objects to decomposable objects. Hence, by Theorem \ref{Corres} $(i)$, every object $X$ in $\CP_n(\CC)$ is indecomposable if and only if $P_*(X)$ is indecomposable in $\CP_n(\CC/G)$. Now the result follows from part $(i)$.
\end{proof}

\subsection{Induced Galois $G$-(pre)covering on $n$-precluster tilting subcategories}
In this subsection, it is proved that the push-down functor $P_*: \mmod \CC \lrt \mmod (\CC/G)$ preserves $n$-precluster tilting subcategories. Using this result, we investigate the relation between $\tau_n$-selfinjectiveness of $\CC$ and $\CC/G$.

\begin{slemma}\label{approximation}
 Let $\CC$ be a locally support-finite $G$-category with a free $G$-action on $\ind \CC$. Assume that $\CU$ is a functorially finite subcategory of $\mmod \CC$ which is $G$-equivariant and is closed under finite direct sums. Then $P_*(\CU)$ is a functorially finite subcategory of $\mmod(\CC/G)$.
\end{slemma}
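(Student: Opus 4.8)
The plan is to reduce the statement to the existence, for every object $N$ of $\mmod(\CC/G)$, of both a right and a left $P_*(\CU)$-approximation. Since $\CC$ is locally support-finite, Proposition \ref{loc-supp-fin} guarantees that $\CC/G$ is locally support-finite and, crucially, that $P_*$ is \emph{dense}; hence I may fix a $\CC$-module $M$ with $P_*(M)\cong N$. This density is precisely where the locally-support-finite hypothesis enters, and it is what lets me transport the approximation problem in $\mmod(\CC/G)$ back to $\mmod\CC$, where $\CU$ is assumed functorially finite. The $G$-equivariance of $\CU$ will be used to absorb the $G$-twists produced by the lifting isomorphism.

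For contravariant finiteness I would take a right $\CU$-approximation $f\colon U \lrt M$ in $\mmod\CC$ (available since $\CU$ is functorially finite) and claim that $P_*(f)\colon P_*(U) \lrt P_*(M)\cong N$ is a right $P_*(\CU)$-approximation. To verify this, let $W\in\CU$ and let $g\colon P_*(W) \lrt N$ be arbitrary. Using the Galois $G$-precovering isomorphism recorded in \ref{lifting}, I write $g=\sum_{a\in G}P_*(g_a)$ with $g_a\colon W \lrt {}^aM$ and only finitely many $g_a$ nonzero. Because $\CU$ is $G$-equivariant, ${}^af\colon {}^aU \lrt {}^aM$ is again a right $\CU$-approximation of ${}^aM$, so each $g_a$ factors as $g_a={}^af\circ t_a$ with $t_a\colon W \lrt {}^aU$. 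Applying $P_*$ and invoking the canonical identification $P_*({}^aX)\cong P_*(X)$ (which comes from $P\circ A(a)=P$), each $P_*(g_a)$ factors through $P_*(f)$, whence $g=P_*(f)\circ\bigl(\sum_{a}P_*(t_a)\bigr)$ factors through $P_*(f)$. Since $P_*(U)\in P_*(\CU)$, this produces the desired approximation.

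The left $P_*(\CU)$-approximation is obtained dually: I take a left $\CU$-approximation $f\colon M \lrt U$, and for any $W\in\CU$ and $g\colon N\cong P_*(M) \lrt P_*(W)$ I lift $g=\sum_{a}P_*(g_a)$ with $g_a\colon M \lrt {}^aW$, factor each $g_a$ through $f$ (using ${}^aW\in\CU$), and push down to obtain $g=\bigl(\sum_{a}P_*(s_a)\bigr)\circ P_*(f)$. The hard part will not be any single deep step but rather the careful bookkeeping of the natural isomorphisms $P_*({}^aX)\cong P_*(X)$ together with the verification that each lifting has only finitely many nonzero components, so that the finite sums $\sum_{a}P_*(t_a)$ and $\sum_{a}P_*(s_a)$ genuinely define morphisms in $\mmod(\CC/G)$; this finiteness is exactly what the lifting statement in \ref{lifting} provides. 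Finally I would record that $P_*(\CU)$ is additively closed, using that $P_*$ preserves finite direct sums and, by Theorem \ref{Corres}$(i)$, sends indecomposables to indecomposables, so that $P_*(\CU)$ is a bona fide functorially finite subcategory of $\mmod(\CC/G)$.
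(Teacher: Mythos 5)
Your proposal is correct and follows essentially the same route as the paper's proof: use density of $P_*$ (Proposition \ref{loc-supp-fin}) to write the target as $P_*(M)$, lift a test morphism along the Galois $G$-precovering as a finite sum $\sum_a P_*(g_a)$ via \ref{lifting}, factor each component through a $\CU$-approximation using $G$-equivariance, and push the factorization back down. The only (cosmetic) difference is bookkeeping: you place the $G$-twist on the approximated module and factor through the twisted approximations ${}^a f$, while the paper places the twist on the test object from $\CU$ (so each ${}^a M'$ lies in $\CU$ by equivariance) and factors through the single approximation $f$; both yield the same finite-sum factorization.
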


\begin{proof}
We only prove that $P_*(\CU)$ is contravariantly finite, the covariantly finite case follows similarly. Consider an object $Y$ of $\mmod(\CC/G)$. Since $P_*$ is dense, by Proposition \ref{loc-supp-fin}, $Y \cong P_*(X)$ for some $X \in \mmod \CC$. By assumption there is an object $M$ in $\CU$ together with a right $\CU$-approximation $f: M \lrt X$ in $\mmod \CC$. We claim that
$P_{*}(f): P_{*}(M) \lrt P_{*}(X)$ is a right $P_{*}(\CU)$-approximation in $\mmod(\CC/G)$ .

Let there is a morphism $g: P_{*}(M') \lrt P_{*}(X) $ with $M' \in \CU$. By \ref{lifting}, $g: P_{*}(M') \lrt P_{*}(X) $ can be written as
\[g= \sum_{a \in G} P_{*}({u}_a),\]
where ${u}_a: {}^a M' \lrt X$ and only finitely many of these maps ${u}_a$ being non-zero. Hence, there is a morphism $u: \oplus_{a \in G'} {}^a M' \lrt X$ where
$u=(u_a)_{a \in G'}$ and $G'$ is a finite subgroup of $G$ such that $a \in G'$ if and only if $u_a$ is non-zero. Since $\CU$ is $G$-equivariant and is closed under finite direct sums, $\oplus_{a \in G'} {}^a M'$ belongs to $\CU$. Therefore, for each $a \in G'$, there is a morphism $v_a: {}^a M' \lrt M$ making the following diagram commutative
\[\xymatrix{ M \ar[r]^f & X \\ {}^a M'. \ar[u]^{v_a} \ar[ru]_{u} & }\]
Take $h= \sum_{a \in G'} P_{*}({v}_a)$. Then we have a morphism $h: P_{*}(M') \lrt P_{*}(M)$ such that
\[ \begin{array}{lllll}
 P_{*}(f) \circ h & = P_{*}(f) \circ (\sum_{a \in G'} P_{*}({v}_a)) \\
 & = \sum_{a \in G'} (P_{*}(f) \circ P_{*}({v}_a)) \\
 & = \sum_{a \in G'} P_{*}(f \circ {v}_a)\\
 & = \sum_{a \in G'} P_{*}({u}_a)\\
 & = g.
 \end{array}\]
The proof now is complete.
\end{proof}

We need the following lemma.

\begin{slemma}\cite[Lemma 3.5]{DI}\label{DILemma}
Let $\CC$ be a locally bounded $G$-category with free $G$-action on $\ind \CC$. Then there is a functorial isomorphism
\[\Ext_{\CC/G}^i (P_{*}(X), P_{*}(Y)) \cong \oplus_{a \in G}\Ext_{\CC}^i(X, {}^a Y),\]
for every $i \geq 0$ and $X, Y \in \mmod \CC$.
\end{slemma}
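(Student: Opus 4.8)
The plan is to reduce the statement to the degree-zero case — which is exactly the Galois $G$-precovering isomorphism recorded in \ref{lifting} — by computing both Ext groups from a single projective resolution of $X$. First I would fix a projective resolution
\[ \cdots \lrt P_1 \st{g_1}\lrt P_0 \st{g_0}\lrt X \lrt 0 \]
in $\mmod \CC$, with each $P_j$ a finite direct sum of representables $\CC(-,x)$. Since $P_*$ is exact and sends $\CC(-,x)$ to the representable (hence projective) module $\CC/G(-,Px)$, the complex $P_*(P_\bullet)\lrt P_*(X)$ is a projective resolution of $P_*(X)$ in $\mmod(\CC/G)$. Consequently $\Ext^i_{\CC/G}(P_*(X),P_*(Y))$ is the $i$-th cohomology of the cochain complex $\Hom_{\CC/G}(P_*(P_\bullet),P_*(Y))$, and likewise $\Ext^i_{\CC}(X,{}^a Y)$ is the $i$-th cohomology of $\Hom_{\CC}(P_\bullet,{}^a Y)$ for every $a$.

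Next I would apply \ref{lifting} degreewise. For each $j$ it provides a $\k$-isomorphism
\[ \bigoplus_{a\in G}\Hom_{\CC}(P_j,{}^a Y)\;\cong\;\Hom_{\CC/G}(P_*(P_j),P_*(Y)), \]
and because $P_j$ and $Y$ are finitely generated over a locally bounded category they have finite support (Remark \ref{flength}), so on the left only finitely many summands are nonzero and the direct sum is well defined. The decisive point is that this family of isomorphisms is natural in the first argument — which it is, being part of the Galois $G$-precovering structure of $P_*$ from Theorem \ref{Corres} — so it is compatible with the differentials $\Hom(g_j,-)$ on both sides and therefore assembles into an isomorphism of cochain complexes
\[ \bigoplus_{a\in G}\Hom_{\CC}(P_\bullet,{}^a Y)\;\cong\;\Hom_{\CC/G}(P_*(P_\bullet),P_*(Y)). \]

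Finally, since cohomology commutes with direct sums, passing to $H^i$ gives
\[ \Ext^i_{\CC/G}(P_*(X),P_*(Y))\;\cong\;\bigoplus_{a\in G}H^i\big(\Hom_{\CC}(P_\bullet,{}^a Y)\big)\;=\;\bigoplus_{a\in G}\Ext^i_{\CC}(X,{}^a Y), \]
which is the asserted isomorphism; functoriality in $X$ and $Y$ follows from the naturality of the degree-zero isomorphism together with the independence of the resolution up to chain homotopy. The step I expect to be the main obstacle is precisely the chain-level naturality in the first variable: one must check that the identifications of \ref{lifting} genuinely intertwine the transposed differentials $\Hom(g_j,{}^a Y)$ with $\Hom_{\CC/G}(P_*(g_j),P_*(Y))$, since this is what upgrades a collection of vector-space isomorphisms to an isomorphism of complexes and hence of Ext. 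A smaller point to keep honest is the exactness and projectivity-preservation of $P_*$ used in the first step; both hold because $P_*$ is computed by the finite orbit sums $P_*(Z)(u)=\oplus_{x\in u}Z(x)$ and carries representables to representables.
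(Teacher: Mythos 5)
Your proof is correct, but note that the paper does not actually prove this lemma: it is imported wholesale as a citation to \cite[Lemma 3.5]{DI}, so there is no in-paper argument to compare against. Your resolution-based argument is the standard one and is essentially the proof given in the cited source: $P_*$ is exact and carries $\CC(-,x)$ to $\CC/G(-,Px)$, so it preserves projective resolutions, and the degree-zero covering isomorphism of \ref{lifting} then lifts degreewise. The naturality check you flag as the main obstacle is indeed the crux, and it goes through for the reason you suggest: the isomorphism sends $(f_a)_a$ to $\sum_a P_*(f_a)$, and for any $g\colon P_{j+1}\lrt P_j$ one has $\bigl(\sum_a P_*(f_a)\bigr)\circ P_*(g)=\sum_a P_*(f_a\circ g)$ by functoriality of $P_*$, so the identifications intertwine the differentials $\Hom(g_j,{}^aY)$ and $\Hom_{\CC/G}(P_*(g_j),P_*(Y))$ and give an isomorphism of Hom-complexes. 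One small point: your worry about only finitely many summands being nonzero is not actually needed for the direct sum to make sense (it is needed only to see that the covering map lands in, rather than merely maps to, the full Hom-space, which is part of the precovering statement you quote), so it is harmless but inessential.
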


\begin{stheorem}\label{Main1}
Let $\CC$ be a locally bounded $G$-category with free $G$-action on $\ind \CC$.
	\begin{itemize}
		\item[$(i)$] If $\CU$ is a $G$-equivariant $n$-precluster tilting subcategory of $\mmod \CC$ such that $P_{*}(\CU)$ is functorially finite in $\mmod(\CC/G)$, then $P_{*}(\CU)$ is an $n$-precluster tilting subcategory of $\mmod(\CC/G)$.
		\item[$(ii)$] If $\CC$ is, in addition, locally support-finite and $\CU$ is a $G$-equivariant $n$-precluster tilting subcategory of $\mmod \CC$, then $P_{*}(\CU)$ is an $n$-precluster tilting subcategory of $\mmod(\CC/G)$.
	\end{itemize}
\end{stheorem}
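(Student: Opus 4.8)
The plan is to verify, one by one, the four defining conditions of the (classical) notion of $n$-precluster tilting subcategory for $P_*(\CU):= \add\{P_*(M)\mid M\in\CU\}$ inside $\mmod(\CC/G)$: functorial finiteness, the generator-cogenerator property, stability under $\tau_n$ and $\tau_n^-$, and vanishing of $\Ext^i_{\CC/G}$ for $0<i<n$. Three of these transfer directly from $\CU$ via the machinery already assembled, while functorial finiteness is a hypothesis in $(i)$ and is supplied by Lemma \ref{approximation} in $(ii)$. Since $\tau_n$, $\tau_n^-$ and each $\Ext^i_{\CC/G}(-,-)$ are additive and respect direct summands, it suffices to check the conditions on the generators $P_*(M)$, $M\in\CU$, and then pass to the additive closure; this reduces the whole argument to statements about the objects $P_*(M)$.

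For the generator-cogenerator condition I would use that $P\colon\CC\to\CC/G$ is a Galois $G$-covering, hence surjective on objects; thus every indecomposable projective $\CC/G$-module has the form $\CC/G(-,Px)\cong P_*\CC(-,x)$ and, dually, every indecomposable injective has the form $D(\CC/G)(Px,-)\cong P_*(D\CC(x,-))$, exactly as established in the proof of Theorem \ref{P_n(C)}. Since $\CU$ is a generator-cogenerator of $\mmod\CC$, both $\CC(-,x)$ and $D\CC(x,-)$ lie in $\CU$, so their push-downs lie in $P_*(\CU)$. For the $\tau_n$-stability condition I would invoke the commutation isomorphisms $P_*(\tau_n M)\cong\tau_n P_*(M)$ and $P_*(\tau_n^- M)\cong\tau_n^- P_*(M)$ from Theorem \ref{P_n(C)}: for $M\in\CU$ one has $\tau_n M,\tau_n^- M\in\CU$, whence $\tau_n P_*(M)$ and $\tau_n^- P_*(M)$ again lie in $P_*(\CU)$.

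The conceptual heart of the argument is the $\Ext$-vanishing condition, and this is the one place where $G$-equivariance of $\CU$ is indispensable. For $M,N\in\CU$ and $0<i<n$, Lemma \ref{DILemma} gives
\[\Ext_{\CC/G}^i(P_*(M),P_*(N))\cong\bigoplus_{a\in G}\Ext_{\CC}^i(M,{}^aN).\]
Because $\CU$ is $G$-equivariant we have ${}^aN\in\CU$ for every $a\in G$, and because $\CU$ is $n$-precluster tilting each summand $\Ext_{\CC}^i(M,{}^aN)$ vanishes in this range; hence $\Ext_{\CC/G}^i(P_*(M),P_*(N))=0$ for $0<i<n$, and the condition for $P_*(\CU)$ follows after passing to additive closures. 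Note that without $G$-equivariance the terms ${}^aN$ need not lie in $\CU$, so the direct sum would not obviously vanish; this is precisely where the hypothesis does its work.

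Finally, for functorial finiteness there is nothing to prove in $(i)$, since it is assumed; in $(ii)$ the subcategory $\CU$ is functorially finite, $G$-equivariant and (being additively closed) closed under finite direct sums, and $\CC$ is locally support-finite, so Lemma \ref{approximation} applies verbatim to give that $P_*(\CU)$ is functorially finite, and $(ii)$ reduces to $(i)$. The step I expect to require the most care is the bookkeeping ensuring that checking the three transferable conditions on the generators $P_*(M)$ genuinely extends to the additive closure; but since $P_*$ preserves indecomposability and direct summands (Theorem \ref{Corres}$(i)$ together with \cite[Lemma 2.6]{BL}) and $\tau_n$, $\tau_n^-$, $\Ext^i$ are all additive, this reduction is routine. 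The substantive inputs are thus exactly the commutation of $P_*$ with the higher Auslander-Reiten translations and the $\Ext$-formula of Lemma \ref{DILemma}.
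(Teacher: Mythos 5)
Your proposal is correct and follows essentially the same route as the paper's own proof: the generator-cogenerator property from surjectivity of $P$ on objects together with $P_*(\CC(-,x))\cong \CC/G(-,Px)$ and $P_*(D\CC(x,-))\cong D(\CC/G(Px,-))$, stability under $\tau_n$ and $\tau_n^-$ from the commutation isomorphisms of Theorem \ref{P_n(C)}, the $\Ext$-vanishing from Lemma \ref{DILemma} combined with $G$-equivariance, and functorial finiteness from the hypothesis in $(i)$ and Lemma \ref{approximation} in $(ii)$. Your explicit bookkeeping about passing to additive closures is a minor refinement that the paper leaves implicit.
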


\begin{proof}
$(i)$ First we check that $P_{*}(\CU)$ is a generator-cogenerator subcategory of $\mmod(\CC/G)$. Since the functor $P: \CC \lrt \CC/G$ is surjective on objects, it is enough to show that $P_{*}(\CU)$ contains $\CC/G(-, Px)$ and $D(\CC/G(Px,-))$ for each $x \in \CC$. By definition of the push-down functor, $P_{*}(\CC(-, x))\cong \CC/G(-, Px)$ and $P_{*}(D\CC(x,-))\cong D(\CC/G(Px,-))$. Thus $P_*(\CU)$ contains $\CC/G(-, Px)$ and $D(\CC/G(Px, -))$ for every object $x$ of $\CC$. This means that $P_*(\CU)$ is a generator-cogenerator subcategory of $\mmod(\CC/G)$.
	
In view of the proof of Proposition \ref{P_n(C)}, there exist natural isomorphisms $P_{*} \tau_n \cong \tau_n P_{*}$ and $P_{*}\tau^-_n \cong \tau^-_n P_{*}$. Hence
\[\tau_n(P_{*}(\CC)) \subseteq P_{*}(\CC) \ \text{and} \ \tau^-_n(P_{*}(\CC)) \subseteq P_{*}(\CC).\]
Moreover, by Lemma \ref{DILemma}, for every $Y_1 , Y_2 \in \CU$ and every $0<i<n$, there is a functorial isomorphism
\[\Ext_{\CC/G}^i (P_{*}(Y_1), P_{*}(Y_2)) \cong \oplus_{a \in G}\Ext_{\CC}^i(Y_1, {}^a Y_2).\]
Since $\CU$ is $n$-precluster tilting and $G$-equivariant, $\Ext_{\CC}^i(Y_1, {}^a Y_2)=0$ for all $a \in G$. This means that $\Ext_{\CC/G}^i (P_{*}(Y_1), P_{*}(Y_2))=0$ as well. The proof now is complete.
	
$(ii)$ This follows from Lemma \ref{approximation} and part$(i)$.
\end{proof}

\begin{stheorem}\label{Main2}
Let $\CC$ be a locally bounded $G$-category with free $G$-action on $\ind \CC$. If $\CV \subseteq P_{*}(\mmod \CC) $ is an $n$-precluster tilting subcategory of $\mmod(\CC/G)$, then
\[P_{*}^{-1}(\CV)=\{X \in \mmod \CC| P_{*}(X)\in \CV\}\]
is a $G$-equivariant $n$-precluster tilting subcategory of $\mmod \CC$.
\end{stheorem}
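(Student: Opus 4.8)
The plan is to show that $\CU := P_{*}^{-1}(\CV)$ satisfies every clause of Definition \ref{n-precluster}. All of the conditions except functorial finiteness transport directly through $P_{*}$, using its standard properties together with the hypothesis $\CV \subseteq P_{*}(\mmod \CC)$; the functorial finiteness is the only genuinely new point, and will be the main obstacle. Throughout I use that $P_{*}$ is exact, commutes with finite direct sums and preserves summands, sends $\CC(-,x)$ to $\CC/G(-,Px)$ and $D\CC(x,-)$ to $D(\CC/G(Px,-))$, and satisfies $P_{*}({}^{a}X)\cong P_{*}(X)$ for all $a\in G$ (the push-down only sees $G$-orbits).

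For the routine clauses: since $\CV$ is additively closed, $P_{*}(X\oplus Y)\cong P_{*}(X)\oplus P_{*}(Y)$ shows $\CU$ is closed under sums, and the fact that $\CV$ is closed under summands shows $\CU$ is closed under summands. For $G$-equivariance, $X\in\CU$ gives $P_{*}({}^{a}X)\cong P_{*}(X)\in\CV$, so ${}^{a}X\in\CU$ and ${}^{a}\CU=\CU$. For the generator-cogenerator property, $P_{*}(\CC(-,x))\cong\CC/G(-,Px)$ and $P_{*}(D\CC(x,-))\cong D(\CC/G(Px,-))$ are projective resp. injective, hence (as $\CV$ is a generator-cogenerator and $P$ is surjective on objects) lie in $\CV$, so $\CC(-,x),D\CC(x,-)\in\CU$ for all $x$. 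For the translation clauses, the isomorphisms $P_{*}\tau_{n}\cong\tau_{n}P_{*}$ and $P_{*}\tau_{n}^{-}\cong\tau_{n}^{-}P_{*}$ established in the proof of Proposition \ref{P_n(C)}, together with $\tau_{n}(\CV)\subseteq\CV$ and $\tau_{n}^{-}(\CV)\subseteq\CV$, give $P_{*}(\tau_{n}X)\cong\tau_{n}P_{*}(X)\in\CV$ for $X\in\CU$, i.e. $\tau_{n}(\CU)\subseteq\CU$, and similarly for $\tau_{n}^{-}$. For the $\Ext$-vanishing, Lemma \ref{DILemma} gives, for $X,Y\in\CU$ and $0<i<n$,
\[ \Ext_{\CC/G}^{i}(P_{*}(X),P_{*}(Y))\cong\bigoplus_{a\in G}\Ext_{\CC}^{i}(X,{}^{a}Y); \]
the left-hand side vanishes because $P_{*}(X),P_{*}(Y)\in\CV$, so every summand vanishes and in particular the summand $a=1$ yields $\Ext_{\CC}^{i}(X,Y)=0$.

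The \emph{main obstacle} is functorial finiteness, since this does not follow by merely transporting the defining property of $\CV$ through $P_{*}$: one must lift approximations. I will prove contravariant finiteness, the covariant case being symmetric. Fix $X\in\mmod\CC$ and choose a right $\CV$-approximation $\theta\colon W\to P_{*}(X)$ with $W\in\CV$; since $\CV\subseteq P_{*}(\mmod\CC)$ we may write $W=P_{*}(M)$, and then $P_{*}(M)=W\in\CV$ forces $M\in\CU$. Using the lifting of \ref{lifting}, write $\theta=\sum_{a\in G}P_{*}(f_{a})$ with $f_{a}\colon M\to{}^{a}X$, nonzero only for $a$ in a finite set $G'$. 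Set $h_{a}:={}^{a^{-1}}f_{a}\colon{}^{a^{-1}}M\to X$; since $\CU$ is $G$-equivariant and additively closed, $\bigoplus_{a\in G'}{}^{a^{-1}}M\in\CU$, and I claim
\[ h:=(h_{a})_{a\in G'}\colon\bigoplus_{a\in G'}{}^{a^{-1}}M\lrt X \]
is a right $\CU$-approximation. To verify this, take $N\in\CU$ and $\phi\colon N\to X$; then $P_{*}(\phi)=\theta\circ\psi$ for some $\psi\colon P_{*}(N)\to P_{*}(M)$, and lifting $\psi=\sum_{b}P_{*}(g_{b})$ with $g_{b}\colon N\to{}^{b}M$ and applying the composition rule $P_{*}(f_{a})\circ P_{*}(g_{b})=P_{*}({}^{b}f_{a}\circ g_{b})$ (which is just functoriality of $P_{*}$ together with $P_{*}({}^{b}f_{a})=P_{*}(f_{a})$) expresses $\theta\psi$ as a sum of pushed-down maps $N\to{}^{ba}X$. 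Comparing with the unique lift of $\phi$ under $\bigoplus_{c}\Hom_{\CC}(N,{}^{c}X)\cong\Hom_{\CC/G}(P_{*}N,P_{*}X)$ and matching the component $c=1$ isolates the pairs with $b=a^{-1}$, yielding $\phi=\sum_{a\in G'}h_{a}\circ g_{a^{-1}}=h\circ(g_{a^{-1}})_{a\in G'}$; thus $\phi$ factors through $h$. The delicate part is exactly this last step: keeping track of how the composition formula for liftings permutes the orbit index, so that the $c=1$ component reassembles into a genuine factorization through the finite direct sum $\bigoplus_{a\in G'}{}^{a^{-1}}M$.
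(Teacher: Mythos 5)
Your proof is correct, and for most clauses it coincides with the paper's: $G$-equivariance from $P_*({}^aX)\cong P_*(X)$, closure under $\tau_n$ and $\tau_n^-$ from the commutation $P_*\tau_n\cong\tau_n P_*$ established in the proof of Theorem \ref{P_n(C)}, and $\Ext$-vanishing by extracting the $a=1$ summand from the isomorphism of Lemma \ref{DILemma}. Your generator--cogenerator argument is in fact more direct than the paper's, which detours through $P^*P_*(X)\cong\bigoplus_{a\in G}{}^aX$ and a Krull--Schmidt-type splitting, whereas you simply note $P_*(\CC(-,x))\cong\CC/G(-,Px)\in\CV$ and invoke the definition of $P_*^{-1}(\CV)$ (together with isomorphism-closure of $\CV$); you also explicitly check additive closure of $\CU$, which the paper leaves tacit. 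The genuine divergence is at functorial finiteness: the paper disposes of it in one line by citing Lemma 3.7 of \cite{DI}, while you prove it outright, lifting a right $\CV$-approximation $\theta\colon P_*(M)\to P_*(X)$ to $h=({}^{a^{-1}}f_a)_{a\in G'}\colon\bigoplus_{a\in G'}{}^{a^{-1}}M\to X$ and verifying the approximation property via the composition rule for liftings and injectivity of the covering isomorphism $\bigoplus_{c\in G}\Hom_\CC(N,{}^cX)\cong\Hom_{\CC/G}(P_*N,P_*X)$; your bookkeeping ${}^b({}^aX)={}^{ba}X$ and the matching of the identity component are exactly right. Your argument is essentially a proof of the cited lemma --- the mirror image of the paper's own Lemma \ref{approximation}, passing from approximations downstairs to approximations upstairs rather than conversely --- and, importantly, it uses no density or local support-finiteness hypothesis, which is precisely why Theorem \ref{Main2} holds for an arbitrary locally bounded $\CC$. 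What your route buys is a self-contained treatment of the one step the paper outsources; what the citation buys the paper is brevity.
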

\begin{proof}
Let $\CU = P_{*}^{-1}(\CV)$. Since $P_{*}(X) \cong P_{*}({}^a X)$ for each $a \in G$, $\CU$ is $G$-equivariant.
Consider an object $x$ of $\CC$. Since $\CV$ is a generator-cogenerator subcategory of $\mmod(\CC/G)$, it contains $\CC/G$-modules $\CC/G(-, Px)$ and $D(\CC/G(Px,-))$. By definition, there is a $\CC$-module $X$ such that $P_{*}(X)\cong \CC/G(-, Px)\cong P_{*}(\CC(-,x))$. Furthermore, by applying the pull-up functor $P^*$ on the isomorphism $P_*(X) \cong P_*(\CC(-,x))$, in view of \ref{push-down}, we have the following canonical isomorphism
\[ \oplus _{a \in G} {}^a X \cong \oplus_{a \in G} {}^a \CC(-, x).\]
 Hence $\CC(-,x)$ is a direct summand of $\oplus _{a \in G} {}^a X$ and so there is an object $a \in G$ and a direct summand $Y$ of $X$ such that $\CC(-,x)= {}^a Y$, because $\CC(-, x)$ is an indecomposable $\CC$-module. This means that $\CC(-,x)$ belongs to $\CU$. The same argument can be applied to see that $D\CC(x,-)$ belongs to $\CU$. Therefore, $\CU$ is a generator-cogenerator subcategory of $\mmod \CC$.

Now, let $X$ be an object of $\CU$. By the proof of Proposition \ref{P_n(C)}, $P_{*} \tau_n (X) \cong \tau_n P_{*}(X)$. Since $\CV$ is an $n$-precluster tilting subcategory, $\tau_n P_{*}(X) \in \tau_n( \CV) \subseteq \CV$. This, in turn, implies that $\tau_n (X) \in \CU$ and so $\tau_n(\CU) \subseteq \CU$. A similar argument works to show that $\tau_n^-(\CU) \subseteq \CU$.

For any two objects $Y_1 $ and $Y_2$ of $\CU$ and $0<i<n$, $\Ext_{\CC}^i(Y_1, Y_2)$ is a direct summand of $\oplus_{a \in G}\Ext_{\CC}^i(Y_1, {}^a Y_2)$ and, by Lemma \ref{DILemma},
$$\Ext_{\CC/G}^i (P_{*}(Y_1), P_{*}(Y_2)) \cong \oplus_{a \in G}\Ext_{\CC}^i(Y_1, {}^a Y_2).$$
By definition of $n$-precluster tilting subcategories, $\Ext_{\CC/G}^i (P_{*}(Y_1), P_{*}(Y_2))=0$, and so
\[\oplus_{a \in G}\Ext_{\CC}^i(Y_1, {}^a Y_2)=0\]
for all $0<i<n$. Thus, $\Ext_{\CC}^i(Y_1, Y_2)=0$ for all $0<i<n$, as well.

Finally, it follows from Lemma 3.7 of \cite{DI} that $\CU$ is functorially finite. This completes the proof.
\end{proof}

\begin{scorollary}\label{tau_nSelfinj}
Let $\CC$ be a locally support-finite $G$-category with free $G$-action on $\ind \CC$. Then $\CC$ admits a $(G, n)$-precluster tilting module if and only if $\CC/G$ is $\tau_n$-selfinjective.
\end{scorollary}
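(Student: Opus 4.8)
The plan is to recast the statement in the language of subcategories and then read it off from Theorems \ref{Main1} and \ref{Main2}, using the dictionary that a $(G,n)$-precluster tilting $\CC$-module $M$ is precisely a module for which $\add\{{}^a M \mid a \in G\}$ is a $G$-equivariant $n$-precluster tilting subcategory of $\mmod \CC$, while an $n$-precluster tilting $\CC/G$-module $N$ is one for which $\add N$ is an $n$-precluster tilting subcategory of $\mmod(\CC/G)$. The two recurring tools will be the isomorphism $P_*({}^a X) \cong P_*(X)$ for all $a \in G$ (already used in the proof of Theorem \ref{Main2}), and the fact that for locally support-finite $\CC$ the dense functor $P_*$ induces a bijection between $G$-orbits of indecomposables of $\mmod \CC$ and indecomposables of $\mmod(\CC/G)$, preserving and reflecting indecomposability by Theorem \ref{Corres}$(i)$.

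For the forward implication I would start from a $(G,n)$-precluster tilting module $M$, so that $\CU := \add\{{}^a M \mid a \in G\}$ is a $G$-equivariant $n$-precluster tilting subcategory of $\mmod \CC$. Since $\CC$ is locally support-finite, Theorem \ref{Main1}$(ii)$ tells us that $P_*(\CU)$ is an $n$-precluster tilting subcategory of $\mmod(\CC/G)$. Because $P_*$ commutes with finite direct sums and preserves direct summands, and $P_*({}^a M) \cong P_*(M)$, one gets $P_*(\CU) = \add\{P_*({}^a M) \mid a \in G\} = \add P_*(M)$. Thus $P_*(M)$ is an $n$-precluster tilting $\CC/G$-module, i.e.\ $\CC/G$ is $\tau_n$-selfinjective.

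For the converse I would take an $n$-precluster tilting $\CC/G$-module $N$, so $\CV := \add N$ is an $n$-precluster tilting subcategory of $\mmod(\CC/G)$. Local support-finiteness makes $P_*$ dense (Proposition \ref{loc-supp-fin}), so $\CV \subseteq P_*(\mmod \CC)$ holds automatically and Theorem \ref{Main2} gives that $\CU := P_*^{-1}(\CV)$ is a $G$-equivariant $n$-precluster tilting subcategory of $\mmod \CC$. It then remains to produce a single module $M$ with $\CU = \add\{{}^a M \mid a \in G\}$. Writing $N \cong \oplus_{i=1}^r N_i$ into indecomposables, I would use the orbit bijection to pick, for each $i$, an indecomposable $M_i \in \mmod \CC$ with $P_*(M_i) \cong N_i$, unique up to the $G$-action, and set $M := \oplus_{i=1}^r M_i$. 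Checking $\CU = \add\{{}^a M \mid a \in G\}$ then proceeds by decomposing an arbitrary $X \in \CU$ into indecomposables, pushing down, matching each (indecomposable) summand $P_*(X_j)$ to some $N_i$, and lifting back to a unique translate ${}^a M_i$; this gives $M$ as a $(G,n)$-precluster tilting module.

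The main obstacle is exactly this last identification in the converse: upgrading $P_*^{-1}(\CV)$ from a $G$-equivariant $n$-precluster tilting subcategory to the additive $G$-orbit closure of one module. This is where local support-finiteness is indispensable, since it supplies both density of $P_*$ and the bijection on indecomposables; without it one controls $P_*^{-1}(\CV)$ only up to the essential image of $P_*$ and cannot guarantee that every indecomposable of $\CU$ lies in a single $G$-orbit. The decomposition-and-lifting argument must therefore be carried out carefully to ensure that no indecomposable summand of an object of $\CU$ escapes $\add\{{}^a M \mid a \in G\}$.
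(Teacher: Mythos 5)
Your proof is correct, and its overall skeleton matches the paper's: the forward direction (Theorem \ref{Main1}(ii) applied to $\CU=\add\{{}^a M\mid a\in G\}$, then $P_*(\CU)=\add P_*(M)$) is essentially identical to what the paper does. Where you genuinely diverge is the key step of the converse, namely upgrading $\CU=P_*^{-1}(\add Y)$ to the additive $G$-orbit closure of a single module. You decompose everything into indecomposables and invoke the orbit bijection for locally support-finite categories (uniqueness up to $G$-translation of indecomposable lifts), a fact the paper states in its introduction citing \cite{DLS, DS} but never formalizes in the body — Theorem \ref{Corres}(ii) as stated only covers the locally representation-finite case, so your appeal to ``Theorem \ref{Corres}(i) plus the bijection'' leans on a result imported from the literature rather than proved in the text. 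The paper instead avoids indecomposable decompositions entirely: given $M\in\CU$ with $P_*(M)\in\add P_*(X)$, it applies the pull-up functor and the canonical isomorphism $P^*P_*(M)\cong\oplus_{a\in G}{}^a M$ from \ref{push-down} to get $\oplus_{a\in G}{}^a M$ as a summand of $\oplus_{a\in G}{}^a(\oplus_I X)$, and then splits $M$ off using Warfield's Krull--Schmidt theorem for infinite direct sums \cite{W}. The paper's route is more self-contained relative to the tools it develops (it needs only density plus the adjunction formula plus \cite{W}), while yours is arguably more transparent representation-theoretically but delegates the crucial uniqueness-of-lifts statement to covering theory for locally support-finite categories; both are valid, and your closing remarks correctly identify local support-finiteness as the indispensable hypothesis in either version.
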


\begin{proof}
Let $M\in \mmod \CC$ be a $(G, n)$-precluster tilting module. Then $\CU= \add \{{}^a M \mid a \in G\}$ is an $n$-precluster tilting subcategory of $\mmod \CC$. So $P_{*}(\CU)$ is an $n$-precluster tilting subcategory of $\mmod(\CC/G)$, by Theorem \ref{Main1} $(i)$. Moreover, it can be easily checked that $P_{*}(\CU)\cong \add P_{*}(M)$. That is $\CC/G$ is $\tau_n$-selfinjective.

For the converse, suppose that $Y$ is an $n$-precluster tilting $\CC/G$-module. By Proposition \ref{loc-supp-fin}, the push-down functor is dense and so $Y \cong P_{*}(X)$ for some $\CC$-module $X$. Take $\CU:=P_{*}^{-1}(\add Y)$ which is $n$-precluster tilting by Theorem \ref{Main2}. It is enough to show that $\CU= \add \{{}^a X \mid a \in G\}$. Clearly $\add \{{}^a X \mid a \in G\} \subseteq \CU$. Now, assume that a $\CC$-module $M$ belongs to $\CU$. Then $P_{*}(M) \in \add Y$. Since $P_{*}$ preserves direct summands, we may assume that $P_{*}(M) = \oplus_{I} P_{*}(X)$, where $I$ is a finite set. By \ref{push-down}, we have the following canonical isomorphism
\[ \oplus_{a \in G} {}^a M\cong \oplus_{a \in G} {}^a (\oplus_{I} X). \]
Hence, $M$ is a direct summand of $\oplus_{a \in G} {}^a (\oplus_{I} X)$ and so there is a finite subset $H$ of $G$ such that
\[M \cong \oplus_{a \in H} {}^a (\oplus_{I } X),\]
thanks to \cite[Theorem 1]{W}. This means that $M$ belongs to $\add \{{}^a X \mid a \in G\}$. Therefore, $\CU \subseteq \add \{{}^a X \mid a \in G\}$. This completes the proof.
\end{proof}

The same argument as in the Proposition 3.5 of \cite{IS} works to prove the following propositions. So we skip their proofs.

\begin{sproposition}\label{Prop1}
	Let $\CC$ be a locally bounded $k$-category. Let $\CI_n:= \CI_n(\CC)$ and $\CP_n:=\CP_n(\CC)$. Then the following conditions are equivalent.
	\begin{itemize}
		\item[$(i)$] $\CC$ is $\tau_n$ selfinjective.
		\item[$(ii)$] $\CI_n$ is of finite type, $\CI_n \subseteq {}^{\perp_{n-1}} \{\CC(-, x) \mid x \in \CC\}$ and $\Ext^i_{\CC} (\CI_n, \CI_n) =0$ for $0 <i <n$.
		\item[$(iii)$] $\{\CC(-,x) \mid x \in \CC\}\subseteq \CI_n$ and $\Ext^i_{\CC} (\CI_n, \CI_n) =0$ for $0 <i <n$.
		\item[$(iv)$] $\CP_n$ is of finite type, $\CP_n \subseteq \{D\CC(x,- )\mid x \in \CC\}^{\perp_{n-1}}$ and $\Ext^i_{\CC}(\CP_n, \CP_n) =0$ for $0 <i <n$.
		\item[$(v)$] $\{ D \CC(x,-) \mid x \in \CC\} \subseteq \CP_n$ and $\Ext^i_{\CC}(\CP_n, \CP_n) =0$ for $0 <i <n$.
	\end{itemize}
\end{sproposition}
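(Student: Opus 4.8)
The plan is first to reduce the five conditions to a single chain by means of the $\k$-duality $D\colon\mmod\CC\to\mmod\CC^{\op}$ recalled before Definition \ref{n-precluster}. This duality exchanges projectives with injectives, intertwines the syzygy and cosyzygy functors, and hence carries $\tau_n$ to $\tau_n^-$; consequently it sends $\CI_n(\CC)$ to $\CP_n(\CC^{\op})$ and $\CP_n(\CC)$ to $\CI_n(\CC^{\op})$, and it turns a left-orthogonal class of projectives into a right-orthogonal class of injectives. Since $D$ also takes an $n$-precluster tilting subcategory of $\mmod\CC$ to one of $\mmod\CC^{\op}$, the category $\CC$ is $\tau_n$-selfinjective if and only if $\CC^{\op}$ is. Under this dictionary, conditions $(iv)$ and $(v)$ for $\CC$ are exactly conditions $(ii)$ and $(iii)$ for $\CC^{\op}$. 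It therefore suffices to establish $(i)\Rightarrow(ii)\Rightarrow(iii)\Rightarrow(i)$.

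For $(i)\Rightarrow(ii)$ I would take an $n$-precluster tilting subcategory $\CU=\add M$. As $\CI_n$ is by construction the smallest additively closed subcategory that contains all injectives $D\CC(x,-)$ and is closed under $\tau_n$, and $\CU$ enjoys both properties, we get $\CI_n\subseteq\CU$. Finiteness of $M$ forces $\CI_n$ to have only finitely many indecomposable summands, so $\CI_n$ is of finite type; the vanishing $\Ext^i_\CC(\CI_n,\CI_n)=0$ for $0<i<n$ is inherited from $\Ext^i_\CC(\CU,\CU)=0$; and because $\CU$ is a generator, each $\CC(-,x)$ lies in $\CU$, whence $\Ext^i_\CC(\CI_n,\CC(-,x))=0$ for $0<i<n$, i.e. $\CI_n\subseteq{}^{\perp_{n-1}}\{\CC(-,x)\mid x\in\CC\}$.

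The substance is in $(ii)\Rightarrow(iii)$ and $(iii)\Rightarrow(i)$, where I would reproduce the staircase argument of \cite[Proposition 3.5]{IS}. For $(ii)\Rightarrow(iii)$, finiteness of $\CI_n$ forces, for every injective $D\CC(x,-)$, the chain $\bigl(\tau_n^i D\CC(x,-)\bigr)_{i\geq0}$ to have only finitely many nonzero terms; let $T$ be its top, so $\tau_n T=0$. The vanishing of $\tau_n T$ gives $\pd T\leq n-1$, and then the hypothesis $T\in{}^{\perp_{n-1}}\{\CC(-,x)\}$ rules out $0<\pd T<n$, so $T$ is projective. A count of indecomposables, both injectives and projectives being indexed by the objects of $\CC$, shows that these tops exhaust the indecomposable projectives, giving $\{\CC(-,x)\mid x\in\CC\}\subseteq\CI_n$. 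For $(iii)\Rightarrow(i)$ I would verify that $\CI_n$ is itself $n$-precluster tilting: it contains the injectives by construction and the projectives by $(iii)$, so it is a generator--cogenerator; it is $\tau_n$-closed by construction; it is $\tau_n^-$-closed because $\tau_n^-$ sends each $\tau_n^iD\CC(x,-)$ with $i\geq1$ to $\tau_n^{i-1}D\CC(x,-)$ and annihilates the injective generators, using that $\tau_n$ and $\tau_n^-$ restrict to mutually inverse equivalences between $\underline{\rm mod}\mbox{-}\CC$ and $\overline{\rm mod}\mbox{-}\CC$ here; and since $\tau_n$ kills projectives, each staircase terminates at a projective, so $\CI_n$ is of finite type and thus functorially finite. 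An additive generator of $\CI_n$ is then an $n$-precluster tilting module, so $\CC$ is $\tau_n$-selfinjective.

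I expect the genuine obstacle to be the two finiteness points that \cite{IS} settles over an Artin algebra and that must be re-established for $\mmod\CC$: the termination of the $\tau_n$-staircases together with the identification of their endpoints as projectives through the higher Auslander--Reiten duality of \cite{I}, and the implication from ``of finite type'' to functorial finiteness. Both hinge on the fact that over a locally bounded $\k$-category every finitely generated module has finite length and finite support (Remark \ref{flength}); this is precisely what allows the finite-dimensional arguments of \cite{IS} to be transported, and checking that the higher Auslander--Reiten formula and the stable equivalence induced by $\tau_n$ remain available in this generality is the crux of the matter.
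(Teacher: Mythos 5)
Your proposal is correct and follows essentially the same route as the paper: the paper in fact gives no argument for this proposition at all, stating only that the same argument as in Proposition 3.5 of \cite{IS} works and skipping the proof, and your reconstruction---the $\k$-duality collapsing $(iv)$, $(v)$ onto $(ii)$, $(iii)$, followed by the cycle $(i)\Rightarrow(ii)\Rightarrow(iii)\Rightarrow(i)$ via the $\tau_n$-staircases---is precisely that intended argument. The transport issues you flag at the end (termination of the staircases, availability of higher Auslander--Reiten duality and the stable inverse property of $\tau_n$, $\tau_n^-$, and passing from finite type to functorial finiteness over a locally bounded category via Remark \ref{flength}) are exactly the points the paper leaves implicit.
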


\begin{sproposition}\label{Prop2}
	Let $\CC$ be a locally bounded $G$-category with free $G$-action on $\ind \CC$. Let $\CI_n:= \CI_n(\CC)$ and $\CP_n:=\CP_n(\CC)$. Then the following conditions are equivalent.
	\begin{itemize}
		\item[$(i)$] $\CC$ admits a $(G, n)$-preculster tilting module.
		\item[$(ii)$] $\CI_n$ is of locally finite type, $\CI_n \subseteq {}^{\perp_{n-1}} \{\CC(-, x) \mid x \in \CC\}$ and $\Ext^i_{\CC} (\CI_n, \CI_n) =0$ for $0 <i <n$.
		\item[$(iii)$] $\{\CC(-,x) \mid x \in \CC\}\subseteq \CI_n$ and $\Ext^i_{\CC} (\CI_n, \CI_n) =0$ for $0 <i <n$.
		\item[$(iv)$] $\CP_n$ is of locally
		finite type, $\CP_n \subseteq \{D\CC(x,- )\mid x \in \CC\}^{\perp_{n-1}}$ and $\Ext^i_{\CC}(\CP_n, \CP_n) =0$ for $0 <i <n$.
		\item[$(v)$] $\{ D \CC(x,-) \mid x \in \CC\} \subseteq \CP_n$ and $\Ext^i_{\CC}(\CP_n, \CP_n) =0$ for $0 <i <n$.
	\end{itemize}
\end{sproposition}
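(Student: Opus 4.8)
The plan is to establish the equivalences $(i)\Leftrightarrow(ii)\Leftrightarrow(iii)$ directly and then to deduce $(i)\Leftrightarrow(iv)\Leftrightarrow(v)$ by the $\k$-duality $D$, upgrading \cite[Proposition 3.5]{IS} from an Artin algebra to a locally bounded $G$-category. Two preliminary observations make this upgrade routine on the homological side. First, because $G$ acts by $\k$-linear autoequivalences, its action commutes with $D$, $\Omega$, $\Omega^{-}$, $\tau$ and $\tau^{-}$, hence with $\tau_n$ and $\tau_n^{-}$, and it permutes the (co)generators by ${}^a\CC(-,x)\cong\CC(-,ax)$ and ${}^aD\CC(x,-)\cong D\CC(ax,-)$; thus $\CP_n$ and $\CI_n$ are $G$-equivariant additive subcategories of $\mmod\CC$ and every operation used below respects the $G$-action. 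Second, the apparatus recalled above — the duality $D$, the translations $\tau_n,\tau_n^{-}$, and the subcategories $\CP_n,\CI_n$ — exists over any locally bounded $\k$-category just as over an Artin algebra, so each homological step of \cite{IS} can be repeated verbatim with $\Ext_\CC$ in place of $\Ext_\La$.

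On the homological core I would argue as follows. For $(i)\Rightarrow(iii)$, a $(G,n)$-precluster tilting module $M$ gives the $G$-equivariant $n$-precluster tilting subcategory $\CU=\add\{{}^aM\mid a\in G\}$; since $\CU$ is a generator-cogenerator stable under $\tau_n$ and $\tau_n^{-}$ one gets $\CP_n\subseteq\CU$ and $\CI_n\subseteq\CU$, and the identification established in \cite{IS} upgrades these to $\CU=\CP_n=\CI_n$, whence $\{\CC(-,x)\}\subseteq\CI_n$ and $\Ext^i_\CC(\CI_n,\CI_n)=\Ext^i_\CC(\CU,\CU)=0$ for $0<i<n$. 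The reverse passage, which also produces the module, is the content of \cite[Proposition 3.5]{IS}: once $\CI_n$ contains every $\CC(-,x)$ and has vanishing self-extensions in degrees $0<i<n$, one checks that $\CI_n$ is itself an $n$-precluster tilting subcategory (it contains the injectives and is $\tau_n$-stable by construction, the remaining $\tau_n^{-}$-stability and functorial finiteness following as in \cite{IS}), so in particular $\Ext^i_\CC(\CI_n,\CC(-,x))=0$, i.e. $\CI_n\subseteq{}^{\perp_{n-1}}\{\CC(-,x)\}$. I would transcribe these steps unchanged, keeping $\Ext$ over $\CC$.

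The genuinely new point, and the one I expect to require the most care, is the translation of ``finite type'' into ``locally finite type'' and, correspondingly, of the additive generator of \cite{IS} into a $(G,n)$-additive generator. Here I would prove that for a $G$-equivariant additive subcategory $\CX$ of $\mmod\CC$ the following are equivalent: $\CX$ is of locally finite type; $\CX$ has only finitely many $G$-orbits of indecomposables; and $\CX=\add\{{}^aM\mid a\in G\}$ for a single $\CC$-module $M$. The passage from local finiteness to orbit-finiteness is exactly where local boundedness and freeness of the $G$-action enter: for a fixed object $x$, freeness forces ${}^aN(x)\neq0$ for only finitely many $a\in G$ for each indecomposable $N$, so the local count at $x$ and the number of orbits meeting the neighbourhood of $x$ determine one another; choosing one representative from each orbit and taking their finite direct sum produces the desired module $M$. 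Feeding this equivalence into the homological core converts the finiteness assertion of $(ii)$ and the conclusion of $(i)$ into their local forms, completing $(i)\Leftrightarrow(ii)\Leftrightarrow(iii)$.

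Finally, $(iv)$ and $(v)$ follow without a separate argument. The duality $D\colon\mmod\CC\to\mmod\CC^{\op}$ interchanges $\CP_n(\CC)$ with $\CI_n(\CC^{\op})$ and $\CC(-,x)$ with the indecomposable injectives of $\mmod\CC^{\op}$, sends $\tau_n$ to $\tau_n^{-}$, and is compatible with the induced free $G$-action on $\CC^{\op}$; since ``$\CC$ admits a $(G,n)$-precluster tilting module'' is self-dual, applying the already-proved equivalence $(i)\Leftrightarrow(ii)\Leftrightarrow(iii)$ to $\CC^{\op}$ and transporting along $D$ yields $(i)\Leftrightarrow(iv)\Leftrightarrow(v)$. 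The only ingredient beyond \cite[Proposition 3.5]{IS} is the orbit-counting equivalence of the previous paragraph; every remaining step is that proof with ``finite type'' systematically replaced by ``locally finite type''.
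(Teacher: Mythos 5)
The paper itself gives no proof of this proposition (it merely asserts that ``the same argument as in Proposition 3.5 of \cite{IS} works'' and skips it), so your reconstruction must be judged on its own correctness; your duality reduction of $(iv)$, $(v)$ to $(ii)$, $(iii)$ over $\CC^{\op}$ and your observation that the $G$-action commutes with $D$, $\Omega$, $\tau$, hence with $\tau_n$, $\tau_n^-$, are fine. The genuine gap sits exactly at the point you yourself flag as the new one. The equivalence you claim for a $G$-equivariant additive subcategory $\CX$ --- locally finite type $\Leftrightarrow$ finitely many $G$-orbits of indecomposables $\Leftrightarrow$ $\CX=\add \{{}^aM\mid a\in G\}$ for a single finitely generated $M$ --- is false in general: local finiteness does not imply orbit-finiteness. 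Take $\CC$ to be an infinite disjoint union of copies of $\k$ (or $\k A_\infty^\infty$ modulo radical square) with $G$ trivial; then $\CI_n=\mmod\CC$ is locally of finite type, conditions $(ii)$--$(v)$ all hold, yet $(i)$ fails, because $\add\{{}^aM\mid a\in G\}$ for finitely generated $M$ contains only finitely many $G$-orbits of indecomposables while any $n$-precluster tilting subcategory must contain every $\CC(-,x)$. Your justification (``the local count at $x$ and the number of orbits meeting the neighbourhood of $x$ determine one another'') only bounds the orbits of modules supported on $Gx$; to bound \emph{all} orbits you need every indecomposable of $\CX$ to meet the $G$-orbit of a fixed finite set of objects, i.e.\ you need the objects of $\CC$ to form finitely many $G$-orbits (equivalently, $\CC/G$ is a finite category). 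That hypothesis is forced by $(i)$ but is not supplied by $(ii)$--$(v)$; it is tacitly assumed in the paper (the standard covering setup where $\CC/G$ is a finite-dimensional algebra), and without adding it explicitly the implications $(ii)\Rightarrow(i)$ and $(iii)\Rightarrow(i)$ are not --- and cannot be --- established.

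There is a second, more local, error in your $(i)\Rightarrow(iii)$: from $\CP_n\subseteq\CU$ and $\CI_n\subseteq\CU$ you claim ``the identification established in \cite{IS} upgrades these to $\CU=\CP_n=\CI_n$.'' This identification is false: for $\La$ selfinjective and $n=1$, $\CU=\mmod \La$ is a finite-type $1$-precluster tilting subcategory, while $\CI_1=\add \La$ is strictly smaller. What is true, and what is the actual engine of \cite[Proposition 3.5]{IS}, is a counting argument on $\tau_n$-orbits of indecomposable injectives: each such orbit consists of pairwise non-isomorphic indecomposables, contains at most one projective (at its end, since $\tau_n$ kills projectives and its image avoids injectives), and comparing the number of orbits with the equal (finite) numbers of indecomposable injectives and projectives forces every orbit to terminate in a projective. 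It is this count that yields $\{\CC(-,x)\mid x\in\CC\}\subseteq\CI_n$ (equivalently $\CP_n=\CI_n$) together with the finiteness of $\CI_n$; the precluster tilting subcategory $\CU$ itself plays no role in it. You never reproduce this argument, and it is precisely the step that must be transcribed $G$-orbit-wise --- an equality of two finite cardinals --- and that again requires $\CC/G$ to have finitely many objects. So a correct write-up needs (a) the explicit standing hypothesis that the objects of $\CC$ form finitely many $G$-orbits, and (b) the orbit-wise version of the Iyama--Solberg counting argument in place of the false identification $\CU=\CI_n$.
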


\begin{scorollary}
Let $\CC$ be a locally support-finite $G$-category with free $G$-action on $\ind \CC$. Then the following conditions are equivalent.
\begin{itemize}
		\item[$(i)$] $\CI_n(\CC)$ is of locally finite type, $\CI_n(\CC) \subseteq {}^{\perp_{n-1}} \{\CC(-, x) \mid x \in \CC\}$ and $\Ext^i_{\CC} (\CI_n(\CC), \CI_n(\CC)) =0$ for $0 <i <n$.
		\item[$(ii)$] $\CI_n(\CC/G)$ is of finite type, $\CI_n(\CC/G) \subseteq {}^{\perp_{n-1}} \{\CC/G(-, u) \mid u \in \CC/G\}$ and \\
		$\Ext^i_{\CC/G} (\CI_n(\CC/G), \CI_n(\CC/G)) =0$ for $0 <i <n$.
		\item[$(iii)$] $\CP_n(\CC)$ is of locally
		finite type, $\CP_n(\CC) \subseteq \{D\CC(x,- )\mid x \in \CC\}^{\perp_{n-1}}$ and $\Ext^i_{\CC}(\CP_n(\CC), \CP_n(\CC)) =0$ for $0 <i <n$.
		\item[$(iv)$] $\CP_n(\CC/G)$ is of finite type, $\CP_n(\CC/G) \subseteq \{D(\CC/G(u,- ))\mid u \in \CC/G\}^{\perp_{n-1}}$ and \\ $\Ext^i_{\CC/G}(\CP_n(\CC/G), \CP_n(\CC/G)) =0$ for $0 <i <n$.
\end{itemize}
\end{scorollary}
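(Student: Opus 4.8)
The plan is to recognize the four listed conditions as instances of the criteria already proved in Propositions \ref{Prop1} and \ref{Prop2}, and then to connect the statements about $\CC$ with those about $\CC/G$ through Corollary \ref{tau_nSelfinj}; no genuinely new homological input is required.

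First I would note that, because $\CC$ is locally support-finite, Proposition \ref{loc-supp-fin} ensures that $\CC/G$ is again locally support-finite, hence in particular a locally bounded $k$-category. This is the point that must be checked, since it is what makes the \emph{$k$-linear} Proposition \ref{Prop1} (rather than the $G$-equivariant Proposition \ref{Prop2}) available for the quotient category $\CC/G$.

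Applying Proposition \ref{Prop2} to $\CC$ itself, condition $(i)$ of the present corollary is exactly condition $(ii)$ of that proposition (with $\CI_n = \CI_n(\CC)$, read with the ``locally finite type'' hypothesis), while condition $(iii)$ is exactly its condition $(iv)$ (with $\CP_n = \CP_n(\CC)$). Thus both $(i)$ and $(iii)$ are equivalent to the assertion that $\CC$ admits a $(G, n)$-precluster tilting module. Dually, applying Proposition \ref{Prop1} to the locally bounded $k$-category $\CC/G$, condition $(ii)$ of the corollary is its condition $(ii)$ and condition $(iv)$ is its condition $(iv)$ (now with the genuine ``finite type'' hypotheses, and with $\CC/G(-,u)$ and $D(\CC/G(u,-))$ in place of the representables over $\CC$). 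Hence both $(ii)$ and $(iv)$ are equivalent to the assertion that $\CC/G$ is $\tau_n$-selfinjective.

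It remains only to bridge the two sides. Corollary \ref{tau_nSelfinj}, whose hypotheses (a locally support-finite $G$-category with free $G$-action on $\ind \CC$) are precisely those in force here, states that $\CC$ admits a $(G, n)$-precluster tilting module if and only if $\CC/G$ is $\tau_n$-selfinjective. Concatenating the three equivalences then gives $(i) \iff (iii)$, $(ii) \iff (iv)$, and $(i) \iff (ii)$, whence all four conditions are equivalent. The only real obstacle is bookkeeping: keeping the ``locally finite type'' hypotheses attached to $\CC$ and the ``finite type'' hypotheses attached to $\CC/G$, and invoking Proposition \ref{Prop1} rather than Proposition \ref{Prop2} for the quotient category.
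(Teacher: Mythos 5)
Your proposal is correct and follows exactly the paper's own argument: the paper's proof is the one-line remark that Corollary \ref{tau_nSelfinj} together with Propositions \ref{Prop1} and \ref{Prop2} implies the result, and your write-up simply makes explicit how each of the four conditions matches a condition of those propositions (Proposition \ref{Prop2} for $\CC$, Proposition \ref{Prop1} for $\CC/G$) before bridging with Corollary \ref{tau_nSelfinj}. Your added observation that $\CC/G$ is again locally bounded (indeed locally support-finite) is a useful bit of bookkeeping that the paper leaves implicit.
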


\begin{proof}
Corollary \ref{tau_nSelfinj} together with Propositions \ref{Prop1} and \ref{Prop2} implies the result
\end{proof}

\begin{sremark}\label{Remark3.12}
Let $\CU$ be a functorially finite subcategory of $\mmod \CC$ that satisfies conditions $(i)$ and $(iii)$. The same argument as in the proof of Proposition 3.8 of \cite{IS} works to see that $\CU$ is an $n$-precluster tilting subcategory if and only if ${}^{\perp_{n-1}} \CU= \CU^{\perp_{n-1}}$. Set
\[\CZ(\CU):={}^{\perp_{n-1}} \CU= \CU^{\perp_{n-1}}.\]
\end{sremark}

\begin{sproposition}\label{Z(U)}
Let $\CC$ be a locally bounded $G$-category with free $G$-action on $\ind \CC$.
\begin{itemize}
\item[$(i)$] If $\CU$ is a $G$-equivariant $n$-precluster tilting subcategory of $\mmod \CC$ such that $P_{*}(\CU)$ is functorially finite in $\mmod(\CC/G)$, then $P_*$ induces a $G$-precovering $P_*: \CZ(\CU) \lrt \CZ(\CU/G)$.
\item[$(ii)$] If $\CC$ is, in addition, locally support-finite and $\CU$ is a $G$-equivariant $n$-precluster tilting subcategory of $\mmod \CC$, then $P_*: \CZ(\CU) \lrt \CZ(\CU/G)$ is dense.
\end{itemize}
\end{sproposition}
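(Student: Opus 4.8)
The plan is to reduce both parts to the $\Ext$-transfer formula of Lemma \ref{DILemma}, combined with the fact that the ambient push-down $P_*\colon \mmod\CC\to\mmod(\CC/G)$ is already a Galois $G$-precovering by Theorem \ref{Corres}$(i)$. First I would record the two structural facts that make the statement meaningful. Under the hypotheses of $(i)$ (resp. $(ii)$), $P_*(\CU)$ is an $n$-precluster tilting subcategory of $\mmod(\CC/G)$ by Theorem \ref{Main1}$(i)$ (resp. $(ii)$), so by Remark \ref{Remark3.12} the subcategory $\CZ(\CU/G):={}^{\perp_{n-1}}P_*(\CU)=P_*(\CU)^{\perp_{n-1}}$ is well defined. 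Moreover, since each autoequivalence ${}^a(-)$ of $\mmod\CC$ preserves $\Ext$-groups and $\CU$ is $G$-equivariant, the description $\CZ(\CU)=\CU^{\perp_{n-1}}$ shows that ${}^aM\in\CZ(\CU)$ whenever $M\in\CZ(\CU)$; hence $\CZ(\CU)$ is itself $G$-equivariant and the $G$-action on $\mmod\CC$ restricts to it.

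For part $(i)$ the key step is to check the image condition $P_*(\CZ(\CU))\subseteq\CZ(\CU/G)$. Take $M\in\CZ(\CU)=\CU^{\perp_{n-1}}$ and $U\in\CU$. For $0<i<n$, Lemma \ref{DILemma} gives
\[\Ext^i_{\CC/G}(P_*(U),P_*(M))\cong\bigoplus_{a\in G}\Ext^i_\CC(U,{}^aM).\]
Since $\CZ(\CU)$ is $G$-equivariant, every ${}^aM$ lies in $\CU^{\perp_{n-1}}$, so each summand on the right vanishes; therefore $P_*(M)\in P_*(\CU)^{\perp_{n-1}}=\CZ(\CU/G)$. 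Because $\CZ(\CU)$ and $\CZ(\CU/G)$ are full subcategories and $\CZ(\CU)$ is $G$-equivariant, the three defining conditions of a Galois $G$-precovering for the restricted functor $P_*\colon\CZ(\CU)\to\CZ(\CU/G)$, namely the compatibility $P_*\circ{}^a(-)\cong P_*$, the orbit condition on objects, and the $\k$-isomorphisms $\bigoplus_{a\in G}\Hom_\CC(X,{}^aY)\cong\Hom_{\CC/G}(P_*X,P_*Y)$, are simply inherited from the ambient precovering of Theorem \ref{Corres}$(i)$. This settles $(i)$.

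For part $(ii)$ it remains to prove density. Let $N\in\CZ(\CU/G)=P_*(\CU)^{\perp_{n-1}}$. Since $\CC$ is locally support-finite, $P_*$ is dense by Proposition \ref{loc-supp-fin}, so $N\cong P_*(M)$ for some $M\in\mmod\CC$. Applying Lemma \ref{DILemma} again, for every $U\in\CU$ and $0<i<n$ we obtain
\[0=\Ext^i_{\CC/G}(P_*(U),N)\cong\bigoplus_{a\in G}\Ext^i_\CC(U,{}^aM),\]
and isolating the summand $a=1$ yields $\Ext^i_\CC(U,M)=0$. Hence $M\in\CU^{\perp_{n-1}}=\CZ(\CU)$ with $P_*(M)\cong N$, which is exactly the density of $P_*\colon\CZ(\CU)\to\CZ(\CU/G)$.

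I do not expect a serious obstacle here: the whole argument is a transfer of $\Ext$-vanishing through Lemma \ref{DILemma}. The two points deserving care are, first, confirming that $\CZ(\CU)$ is genuinely $G$-equivariant, which is precisely what forces all the right-hand summands to vanish in part $(i)$; and second, verifying that the precovering axioms really do restrict verbatim to the full $G$-equivariant subcategory $\CZ(\CU)$ rather than requiring a fresh proof. Both become routine once the image condition $P_*(\CZ(\CU))\subseteq\CZ(\CU/G)$ has been established.
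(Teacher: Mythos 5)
Your proposal is correct and follows essentially the same route as the paper's own proof: both parts reduce to the $\Ext$-transfer isomorphism of Lemma \ref{DILemma}, with Theorem \ref{Main1} guaranteeing that $P_*(\CU)$ is $n$-precluster tilting (so $\CZ(\CU/G)$ makes sense) and Proposition \ref{loc-supp-fin} supplying density in part $(ii)$. The only cosmetic difference is that you work with $\CZ(\CU)=\CU^{\perp_{n-1}}$ and vanish summands $\Ext^i_{\CC}(U,{}^aM)$ via the $G$-equivariance of $\CZ(\CU)$, whereas the paper works with ${}^{\perp_{n-1}}\CU$ and vanishes $\Ext^i_{\CC}(X,{}^aX')$ via the $G$-equivariance of $\CU$ itself; by Remark \ref{Remark3.12} these are the same argument, and your extra verification that the precovering axioms restrict to the full subcategories is a point the paper leaves implicit.
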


\begin{proof}
$(i)$	In view of \cite[Lemma 3.5]{DI}, $P_*(\CU)\simeq \CU/G$. By Theorem \ref{Main1}, $\CU/G$ is an $n$-precluster tilting subcategory of $\mmod(\CC/G)$. Hence we have $\CZ(\CU/G)={}^{\perp_{n-1}} \CU/G= \CU/G^{\perp_{n-1}}$. Let $X$ be an object of $\CZ(\CU)$. According to Lemma \ref{DILemma}, for every $X' \in \CU$
\[\Ext^i_{\CC/G}(P_*(X), P_*(X')) \cong \oplus_{a \in G} \Ext^i_{\CC}(X, {}^a X').\]
This implies that for every $0<i<n$, $\Ext^i_{\CC/G}(P_*(X), P_*(X'))=0$. That is $P_*(X)$ lies in $\CZ(\CU/G)$.
	
$(ii)$ Consider an object $Y$ of $\CZ(\CU/G)$. Since $P_*: \mmod \CC \lrt \mmod(\CC/G)$ is dense, by Proposition \ref{loc-supp-fin}, there is a $\CC$-module $X$ in $\mmod \CC$ such that $P_*(X)\cong Y$. Let $U$ be an object of $\CU$. There is an isomorphism
\[\Ext^i_{\CC/G}(P_*(X), P_*(U)) \cong \oplus_{a \in G} \Ext^i_{\CC}(X, {}^a U)\]
by Lemma \ref{DILemma}. By assumption $\Ext^i_{\CC/G}(P_*(X), P_*(U)) =0$ for all $0<i<n$. This in turn implies that $\oplus_{a \in G} \Ext^i_{\CC}(X, U)$ and so its direct summand $ \Ext^i_{\CC}(X, {}^a U)$ vanishes for every $0<i<n$.
\end{proof}

Toward the end of this subsection, we aim to characterize $\CZ(\CU)$ in terms of Gorenstein projective functors. We do this by applying relative homological algebra techniques introduced by Auslander and Solberg \cite{AS1}. Let us start with some background.

For a subcategory $\CU$ of $\mmod \CC$, let $F_{\CU}$ be an additive sub-bifunctor of $\Ext_{\CC}^i(-,-)$, that is, for each pair of $\CC$-modules $X$ and $Y$, $F_{\CU}(Y,X)$ is consisting of all short exact sequences
\[0 \lrt X \lrt Z \lrt Y \lrt 0\]
that remains exact after applying the Hom-functor $\Hom_{\CC}(U, -)$, for all $U \in \CU$. These kind of short exact sequences are called $F_{\CU}$-exact.

A $\CC$-module $P$ in $\mmod \CC$ is called $F_{\CU}$-projective, if all $F_{\CU}$-exact sequence
\[0 \lrt X \lrt Y \lrt P \lrt 0\]
splits. The full subcategory of $\mmod \CC$ consisting of all $F_{\CU}$-projective $\CC$-modules will be denoted by $\CP(F_{\CU})$. In a duall manner, one can define $F_{\CU}$-injective functors in $\mmod \CC$. Let $\CI(F_{\CU})$ denote the full subcategory of $\mmod \CC$ consisting of all $F_{\CU}$-injectives.
It follows from \cite[Proposition 1.10]{AS1} that
\[\CP(F_{\CU})= \add \{ \CU, \prj \CC\} \ \ \text{and} \ \ \CI(F_{\CU})= \add \{ \tau \CU, \inj \CC\}.\]
Assume that $\mmod \CC$ has enough $F_{\CU}$-projectives and $F_{\CU}$-injectives, that is, for every $X \in \mmod \CC$ there are $F_{\CU}$-exact sequences
\[ 0\lrt K \lrt P \lrt X \lrt 0 \]
and
\[ 0 \lrt X \lrt I \lrt C \lrt 0 \]
in $\mmod \CC$ with $P \in \CP(F_{\CU})$ and $I \in \CI(F_{\CU})$. Hence, a $F_{\CU}$-projective resolution of $X$ is a $F_{\CU}$-exact sequence
\[ \cdots \lrt P _2 \lrt P_1 \lrt P_0 \lrt X \lrt 0 \]
with $P_i \in \CP(F_{\CU})$. Similarly, we have a $F_{\CU}$-injective resolution for every object of $\mmod \CC$.

Consider a pair of finitely generated $\CC$-modules $X$ and $Y$, along with a $F_{\CU}$-projective resolution ${\bf P}_X$ for $X$ and a $F_{\CU}$-injective resolution ${\bf I}_Y$ for $Y$. It has been shown in \cite[Section 2]{AS1} that the $i$-th homology group of the sequence $\Hom_{\CC}({\bf P}_Y, X)$ is isomorphic to the $i$-th homology group of the sequence $\Hom_{\CC}(Y, {\bf I}_X)$ for every $i \geq 1$. Consequently, we can define the $i$-th $F_{\CU}$-relative extension group $\Ext_{F_{\CU}}^i(Y, X)$ as the $i$-th homology group of the sequence $\Hom_{\CC}({\bf P}_Y, X)$ or equivalently, as the $i$-th homology group of the sequence $\Hom_{\CC}(Y, {\bf I}_X)$. For further details, the reader may refer to \cite[Section 2]{AS1} or \cite[Section 2.1]{IS}.

\begin{slemma}
Let $\CU$ be an $n$-precluster tilting subcategory of $\mmod \CC$ and $F=F_{\CU}$. Then $${}^{\perp_F} \CU = \CZ(\CU),$$ where
${}^{\perp_F} \CU = \{X \in \mmod \CC \mid \Ext_F^i(X, \CU)=0 \ \ \text{for all} \ \ i>0\}$.
\end{slemma}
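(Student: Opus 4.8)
The plan is to prove the two inclusions ${}^{\perp_F}\CU \subseteq \CZ(\CU)$ and $\CZ(\CU) \subseteq {}^{\perp_F}\CU$ separately, translating between ordinary $\Ext$-vanishing (defining $\CZ(\CU) = \CU^{\perp_{n-1}}$) and relative $\Ext_F$-vanishing. The bridge between the two notions is the structure of $F_{\CU}$-projective resolutions: by the quoted result from \cite[Proposition 1.10]{AS1}, the relative projectives are $\CP(F_{\CU}) = \add\{\CU, \prj\CC\}$, and an $F_{\CU}$-projective resolution of a module $X$ is built from modules in $\add\{\CU,\prj\CC\}$ with $F_{\CU}$-exact (not merely exact) connecting sequences.

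First I would unwind the definition. An object $X$ lies in ${}^{\perp_F}\CU$ precisely when $\Ext_F^i(X,\CU)=0$ for all $i>0$. The key computational device is that $\Ext_F^i(X,U)$ is the $i$-th homology of $\Hom_{\CC}({\bf P}_X, U)$, where ${\bf P}_X$ is an $F_{\CU}$-projective resolution of $X$. I would compare this with the ordinary $\Ext^i_{\CC}(X,U)$, which is computed from an ordinary projective resolution. The natural comparison map in relative homological algebra gives that $\Ext_F^i$ agrees with $\Ext^i_{\CC}$ in a controlled range; more precisely, since $\CU$ is $n$-precluster tilting, condition $(iii)$ of Definition~\ref{n-precluster} ($\Ext^i_{\CC}(\CU,\CU)=0$ for $0<i<n$) forces the relative and absolute extension groups to coincide for $\CU$-coefficients in the relevant degrees. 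Thus $\Ext_F^i(X,\CU)=0$ for all $i>0$ should be shown equivalent to $\Ext^i_{\CC}(X,\CU)=0$ for $0<i<n$, which is exactly the condition $X \in \CU^{\perp_{n-1}} = \CZ(\CU)$.

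Concretely, for the inclusion $\CZ(\CU)\subseteq {}^{\perp_F}\CU$, I would take $X\in\CZ(\CU)$ and an $F_{\CU}$-projective resolution, then argue degreewise: because the connecting sequences are $F_{\CU}$-exact and $\CU$ is a generator-cogenerator with self-orthogonality up to degree $n-1$, the complex $\Hom_{\CC}({\bf P}_X,U)$ has vanishing higher homology. The point is that the $F_{\CU}$-projective dimension interacts with the $(n-1)$-orthogonality: the syzygies produced in an $F_{\CU}$-resolution stay inside a range where $\Ext^i_{\CC}(-,\CU)$-vanishing propagates. For the reverse inclusion, I would use that $\prj\CC$ contributes no higher $\Ext$ and peel off the relative resolution one term at a time, reading off from $\Ext_F^i(X,\CU)=0$ that the ordinary $\Ext^i_{\CC}(X,\CU)$ must vanish in the range $0<i<n$.

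The main obstacle I expect is controlling the precise degree-shifting in the comparison between $\Ext_F$ and $\Ext_{\CC}$: relative $\Ext$ can differ from absolute $\Ext$ beyond the orthogonality range, so I must verify that the $n$-precluster tilting hypotheses (especially $\tau_n(\CU)\subseteq\CU$ together with $\Ext^i_{\CC}(\CU,\CU)=0$ for $0<i<n$) are exactly what is needed to make the two vanishing conditions equivalent, rather than merely one-directional. I would lean on the relative homological machinery of \cite[Section 2]{AS1} and the characterization of $\CP(F_{\CU})$ to pin this down, and I anticipate that the identity ${}^{\perp_{n-1}}\CU = \CU^{\perp_{n-1}}$ from Remark~\ref{Remark3.12} will be essential for closing the argument symmetrically.
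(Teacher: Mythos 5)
Your first ingredient is the right one and matches the paper: since $\Ext^i_{\CC}(\CU,\CU)=0$ for $0<i<n$, the relative and absolute extension groups agree in that range (the paper cites \cite[Proposition 1.3]{L} for exactly this), which gives ${}^{\perp_F}\CU \subseteq \CZ(\CU)$ immediately and also gives $\Ext^i_F(X,\CU)=0$ for $0<i<n$ when $X\in\CZ(\CU)$. But there is a genuine gap in your treatment of the degrees $i\geq n$, and your proposed mechanism for it would not work. You try to extract the vanishing of $\Ext^i_F(X,\CU)$ for $i\geq n$ from the $F_{\CU}$-projective resolution of $X$, claiming that ``the syzygies produced in an $F_{\CU}$-resolution stay inside a range where $\Ext^i_{\CC}(-,\CU)$-vanishing propagates.'' Nothing of the sort is available: an object $X\in\CZ(\CU)$ typically has infinite relative projective dimension (in the Iyama--Solberg picture, $\CZ(\CU)$ corresponds to Gorenstein projectives over the associated Auslander--Gorenstein algebra), and ordinary $\Ext$-vanishing against $\CU$ in degrees $0<i<n$ says nothing whatsoever about $\Ext_F$ in degrees $\geq n$ if you only look at the resolution of $X$. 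Your own stated hope, that the two vanishing conditions are ``equivalent'' via degree-shifting plus Remark~\ref{Remark3.12}, cannot be realized this way; indeed Remark~\ref{Remark3.12} plays no role in the paper's proof.

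The missing idea is that the vanishing in high degrees comes from the \emph{coefficient} object, not from $X$: every $U\in\CU$ has finite $F$-injective dimension, at most $n-1$. Concretely, take the ordinary injective coresolution truncated as
\[
0 \lrt U \lrt I^0 \lrt \cdots \lrt I^{n-2} \lrt \Omega^{-(n-1)}(U) \lrt 0 .
\]
This sequence is $F$-exact because $\Ext^i_{\CC}(\CU,\CU)=0$ for $0<i<n$; the terms $I^j$ are $F$-injective since $\inj\CC \subseteq \CI(F)$; and, crucially, the last term $\Omega^{-(n-1)}(U)$ is $F$-injective because $\tau_n^-(U)=\tau^-\Omega^{-(n-1)}(U)\in\CU$ (condition $(ii)$ of Definition~\ref{n-precluster}, in the form $\tau_n^-(\CU)\subseteq\CU$ --- note it is this condition, not $\tau_n(\CU)\subseteq\CU$ as you single out) together with the Auslander--Solberg description $\CI(F)=\add\{\tau\CU,\,\inj\CC\}$, so that $\Omega^{-(n-1)}(U)\cong\tau\tau^-\Omega^{-(n-1)}(U)$ up to injective summands lies in $\add\{\tau\CU,\inj\CC\}$. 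Computing $\Ext_F^i(X,U)$ from this finite $F$-injective coresolution (using the balance of relative $\Ext$ quoted in the paper) yields $\Ext_F^i(X,U)=0$ for all $i\geq n$ and \emph{all} $X$, which combined with the comparison in degrees $0<i<n$ closes the inclusion $\CZ(\CU)\subseteq{}^{\perp_F}\CU$. Without this finite-relative-injective-dimension argument, your proof of that inclusion does not go through.
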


\begin{proof}
By definition, we have
\[\Ext^i_{\CC}(\CU, \CU) = 0, \quad \text{for all } 0 < i < n.\]
Thus, by \cite[Proposition 1.3]{L}, for each $\CC$-module $X$ in $\mmod\CC$, we obtain
\[\Ext^i_{F}(X, \CU) = \Ext_{\CC}^i(X, \CU), \quad \text{for all } 0 < i < n.\]

This immediately implies the inclusion
\[{}^{\perp_F} \CU \subseteq \CZ(\CU).\]

Conversely, assume that $X \in \CZ(\CU)$. By the previous remark, we have
\[\Ext^i_{F}(X, \CU) = 0, \quad \text{for all } 0 < i < n.\]
Next, consider an object $U \in \CU$ along with an injective resolution
\[0 \longrightarrow U \longrightarrow I^0 \longrightarrow I^1 \longrightarrow \cdots \longrightarrow I^{n-2} \longrightarrow \Omega^{-(n-1)}(U) \longrightarrow 0,\]
which is $F$-exact due to the condition
\[\Ext^i_{\CC}(\CU, \CU) = 0, \quad \text{for all } 0 < i < n.\]

Since $\tau_n^-(\CU) \subseteq \CU$ and
\[\CI(F) = \add\{\tau(\CU), D\CC(-,x) \mid x \in \CC\},\]
it follows that $\Omega^{-(n-1)}(X)$ belongs to $\mathcal{I}(F)$. Consequently, we obtain
\[\Ext^i_{F}(X, \CU) = 0, \quad \text{for all } i \geq n.\]

Thus, $X \in {}^{\perp_F} \CU$, and we conclude that
\[\CZ(\CU) \subseteq {}^{\perp_F} \CU.\]
\end{proof}

Let $\CU$ be a subcategory of $\mmod \CC$. A totally acyclic complex of projective $\CU$-modules is an exact sequence
\[{\bf P}: \cdots \lrt P_{i+1} \lrt P_{i} \lrt P_{i-1} \lrt \cdots\]
of projective functors in $\mmod \CU$, such that the induced complex $\Hom({\bf P}, \CU(-, U))$ is exact for every object $U$ in $\CU$. Since $\CU$ is an additive subcategory, every finitely generated projective $\CU$-module can be expressed as $\CU(-, U) = \Hom_{\CC}(-, U)|_{\CU}$ for some $U \in \CU$.

A finitely presented $\CU$-module $X$ is called Gorenstein projective if there exists a totally acyclic complex
\[{\bf P}: \cdots \lrt P_{i+1} \lrt P_{i} \lrt P_{i-1} \lrt \cdots\]
of projectives such that $X = \Ker(P_i \lrt P_{i-1})$ for some $i \in \Z$. We denote the full subcategory of $\mmod \CU$ consisting of all Gorenstein projective objects by $\Gprj \CU$. Note that if $\La$ is a Gorenstein Artin algebra, then $\Gprj \La = {\rm CM}\mbox{-} \La$, as referenced in \cite{IS}.

\begin{sproposition}\label{equivalence}
Let $\CC$ be a locally bounded $k$-category and $\CU$ be an $n$-precluster tilting subcategory of $\mmod \CC$. Then there is an equivalence between $\CZ(\CU)$ and $\Gprj \CU$.
\end{sproposition}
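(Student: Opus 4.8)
The plan is to realise the equivalence through the restricted Yoneda functor
\[\Phi := \Hom_{\CC}(-,-)|_{\CU} \colon \mmod \CC \lrt \mmod \CU, \qquad \Phi(X) = \CU(-, X).\]
First I would record its formal properties. By the Yoneda lemma $\Phi$ restricts to an equivalence $\CU \xrightarrow{\sim} \prj \CU$ onto the projective $\CU$-modules, with $\Hom_{\mmod \CU}(\CU(-,U'), \CU(-,U)) \cong \Hom_{\CC}(U', U)$. Since $\CU$ is a generator (it contains every $\CC(-,x)$, hence $\prj\,\CC \subseteq \CU$), the functor $\Phi$ is fully faithful, this being the functor-category analogue of the standard fact that $\Hom(M,-)$ is fully faithful for a generator $M$. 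The crucial compatibility with the relative structure $F := F_{\CU}$ is that, because $\CP(F_{\CU}) = \add\{\CU, \prj\,\CC\} = \CU$, the functor $\Phi$ carries every $F$-exact sequence to a short exact sequence of $\CU$-modules, and hence every $F$-projective resolution of $X$ to a projective resolution of $\Phi(X)$; dually, $\Hom_{\CC}(-,U)$-exactness translates, via $\CU \simeq \prj\,\CU$, into exactness of $\Hom_{\mmod \CU}(-, \CU(-,U))$. In particular $\Phi$ identifies complete $\CU$-resolutions in $\mmod \CC$ (acyclic complexes in $\CU$ that remain exact under both $\Hom_{\CC}(U,-)$ and $\Hom_{\CC}(-,U)$ for all $U \in \CU$) with totally acyclic complexes of projectives in $\mmod \CU$.

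With this dictionary, the core of the argument is to identify $\CZ(\CU)$ with the syzygies of complete $\CU$-resolutions. Recall from the preceding lemma that $\CZ(\CU) = {}^{\perp_F} \CU$, that is, $X \in \CZ(\CU)$ iff $\Ext^i_{F}(X, \CU) = 0$ for all $i > 0$. For the inclusion $\Phi(\CZ(\CU)) \subseteq \Gprj \CU$ I would, for $X \in \CZ(\CU)$, build a complete $\CU$-resolution with $X$ as a syzygy. The left half comes from iterating right $\CU$-approximations: these exist because $\CU$ is functorially finite, they are epimorphisms because $\CU$ is a generator, the resulting short exact sequences are $F$-exact, and the syzygies stay in $\CZ(\CU) = {}^{\perp_F}\CU$ by dimension shifting of $\Ext_{F}$. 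Applying $\Phi$ yields a genuine projective resolution of $\Phi(X)$. The right half is obtained dually from left $\CU$-approximations, which are monomorphisms since $\CU$ is a cogenerator. Splicing and applying $\Phi$ produces a totally acyclic complex of projectives with $\Phi(X)$ as a syzygy, so $\Phi(X) \in \Gprj \CU$.

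For the reverse inclusion and essential surjectivity I would start from $G \in \Gprj \CU$ together with a totally acyclic complex $\mathbf{P}$ of projective $\CU$-modules having $G$ as a syzygy. Since $\Phi$ restricts to the equivalence $\CU \simeq \prj \CU$, the complex $\mathbf{P}$ lifts to a complex $\mathbf{T}$ in $\CU \subseteq \mmod \CC$ with $\Phi(\mathbf{T}) = \mathbf{P}$. Because $\prj\,\CC \subseteq \CU$ and $\Hom_{\CC}(\CC(-,x), \mathbf{T}) = \mathbf{T}(x)$, exactness of $\mathbf{P}$ forces $\mathbf{T}$ to be exact in $\mmod \CC$, while total acyclicity of $\mathbf{P}$ gives exactness of $\Hom_{\CC}(\mathbf{T}, U)$ for all $U \in \CU$. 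Thus $\mathbf{T}$ is a complete $\CU$-resolution, each of its short exact pieces is $F$-exact, and the syzygy $X$ corresponding to $G$ satisfies $\Ext^i_{F}(X, \CU) = 0$ for all $i > 0$, i.e. $X \in {}^{\perp_F}\CU = \CZ(\CU)$; moreover $\Phi$ preserves the $F$-exact pieces, so $\Phi(X) \cong G$. Together with full faithfulness this yields the equivalence $\Phi \colon \CZ(\CU) \xrightarrow{\sim} \Gprj \CU$.

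The hardest step, and the main obstacle, is the forward construction of a complete $\CU$-resolution for an arbitrary $X \in \CZ(\CU)$: one must choose the right half so that the spliced complex is not merely exact but totally acyclic, i.e. stays exact after $\Hom_{\CC}(-,U)$, with cosyzygies remaining in $\CZ(\CU)$. This is exactly where the defining axioms of an $n$-precluster tilting subcategory enter, namely $\tau_n(\CU) \subseteq \CU$, $\tau_n^-(\CU) \subseteq \CU$, and $\Ext^i_{\CC}(\CU, \CU) = 0$ for $0 < i < n$, which render the relative structure $(\mmod \CC, F)$ Gorenstein: the $F$-injectives $\CI(F_{\CU}) = \add\{\tau \CU, \inj\,\CC\}$ have bounded $F$-projective dimension and conversely. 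I would import this structure, precisely the computation already used in the preceding lemma where $\Omega^{-(n-1)}(U) \in \CI(F_{\CU})$, from the relative homological framework of Auslander--Solberg \cite{AS1} and Iyama--Solberg \cite{IS}, to guarantee that every object of ${}^{\perp_F}\CU$ admits a complete $\CU$-resolution, thereby completing the forward inclusion.
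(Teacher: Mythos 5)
Your proposal follows essentially the same route as the paper's proof: the same restricted Yoneda functor $\Phi(X)=\Hom_{\CC}(-,X)|_{\CU}$, the same identification $\CZ(\CU)={}^{\perp_F}\CU$ from the preceding lemma, and the same deferral of the hardest step---the existence of a $\Hom_{\CC}(\CU,-)$-exact coresolution of any $X\in{}^{\perp_F}\CU$ with cosyzygies staying in ${}^{\perp_F}\CU$---to Auslander--Solberg relative homological algebra (the paper cites \cite{AS2}, you cite \cite{AS1} and \cite{IS}). The only variations are cosmetic: you get full faithfulness from the standard generator/projectivization fact instead of the paper's diagram chase with approximation (co)resolutions, and you prove density by lifting an entire totally acyclic complex of projective $\CU$-modules rather than a single kernel, both of which are sound.
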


\begin{proof}
First we prove that there is an equivalence $\Phi: {}^{\perp_F}\CU \lrt \Gprj \CU$ such that for each object $X$ of ${}^{\perp_F} \CU$, $\Phi(X)= \Hom_{\CC}(-,X)|_{\CU}$. By Yoneda Lemma, for every functor $U$ of $\CU$ and every $\CC$-module $X$,
\[ \Hom (\CU(-,U) , \Hom_{\CC}(-,X)|_{\CU}) \cong \Hom_{\CC}(U,X).\]
Consider an object $X$ of $ {}^{\perp_F} \CU$. Since $\CU$ is functorially finite, there is an $\Hom_{\CC}( \CU,-)$-exact sequence
\[\cdots \lrt U_2 \lrt U_1 \lrt U_0 \lrt X \lrt 0 \]
such that for every $i\geq 0$, $U_i \in \CU$. Then we get an exact sequence
\begin{equation}\label{resolution}
 \cdots \lrt \Hom_{\CC}(-, U_1)|_{\CU} \lrt \Hom_{\CC}(-, U_0)|_{\CU} \lrt \Hom_{\CC}(-, X)|_{\CU} \lrt 0.
\end{equation}
By applying the functor $\Hom(-, \CU(-,U))$, for $ U \in \CU$, on the above exact sequence we have the following commutative diagram of exact sequences
 \begingroup\scriptsize\[\xymatrix@C=0.4cm{ 0 \ar[r] & \Hom(\Hom_{\CC}(-, X)|_{\CU}, \CU(-,U)) \ar[r] \ar[d] & \Hom(\CU(-, U_0), \CU(-,U)) \ar[r] \ar[d] &\Hom(\CU(-, U_1), \CU(-,U)) \ar[d] & \\ 0 \ar[r] &\Hom_{\CC}(X,U) \ar[r] & \Hom_{\CC}(U_0, U) \ar[r] & \Hom_{\CC}(U_1, U) & }\] \endgroup
that implies an isomorphism $\Hom(\Hom_{\CC}(-,X)|_{\CU}, \CU(-, U)) \cong \Hom_{\CC}(X, U)$.

By the same method that used in \cite{AS2} for the module case one can deduce that for every object $X$ of ${}^{\perp_F}\CU$ there is a $\Hom_{\CC}(\CU, -)$-exact sequence
 \begin{equation*}
 	0 \lrt X \lrt U^0 \st{d^0}\lrt U^1 \st{d^1}\lrt U^2 \st{d^2}\lrt \cdots
 \end{equation*}
 with $\im d^i \in {}^{\perp_F}\CU$, for all $i\geq 0$,
 that for every object $X'$ of ${}^{\perp_F}\CU$, induces the commutative diagram
\begingroup\scriptsize\[\xymatrix@C=0.4cm{ 0 \ar[r] & \Hom(\Hom_{\CC}(-, X')|_{\CU}, \Hom_{\CC}(-,X)|_{\CU}) \ar[r] \ar[d] & \Hom(\CU(-, X'), \Hom_{\CC}(-,U^0)|_{\CU}) \ar[r] \ar[d] &\Hom(\CU(-, X'), \Hom_{\CC}(-,U^1)|_{\CU}) \ar[d] & \\
0 \ar[r] &\Hom_{\CC}(X',X) \ar[r] & \Hom_{\CC}(X', U^0) \ar[r] & \Hom_{\CC}(X', U^1) & }\]\endgroup
of exact sequences.
Hence, for every two objects $X$ and $X'$ of ${}^{\perp_F}\CU$
\[ \Hom (\Hom_{\CC}(X,-)|_{\CU}, \Hom_{\CC}(X',-)|_{\CU}) \cong \Hom_{\CC}(X', X).\]
A similar argument as above can be applied to see that
\[\Ext^i_{\CU}(\Hom_{\CC}(-,X)|_{\CU}, \CU(-,U)) \cong \Ext^i_F(X,U),\]
for every $\CC$-module $X$, every object $U$ of $\CU$ and all $i>0$. This implies that the exact sequence \ref{resolution} remains exact after applying the Hom-functor $\Hom(-, \CU(-,U))$ for each object $U$ of $\CU$. Moreover, consider a $\Hom_{\CC}(\CU, -)$-exact sequence
\begin{equation}\label{coresolution}
	0 \lrt X \lrt U^0 \st{d^0}\lrt U^1 \st{d^1}\lrt U^2 \st{d^2}\lrt \cdots
\end{equation}
with $\im d^i \in {}^{\perp_F}\CU$, for all $i\geq 0$. Then we an exact sequence
\[0 \lrt \Hom_{\CC}(-, X) |_{\CU} \lrt \CU(-, U^0) \lrt \CU(-, U^1) \lrt \CU(-,U^2) \lrt \cdots\]
that remains exact after applying the Hom-functor $\Hom(-, \CU(-,U))$ for each object $U$ of $\CU$. Therefore, $\Phi(X) = \Hom_{\CC}(-, X)|_{\CU}$ belongs to $\Gprj \CU$.

Now, let $Y$ be an object of $\Gprj \CU$. Hence, we have an exact sequence
$$\xymatrix{ 0 \ar[r] & Y \ar[r] & \CU(-, U^0) \ar[r]^{\CU(-, f)} & \CU(-, U^1) }.$$
Set $K:= \Ker f$. There is an exact sequence
\[ \xymatrix{ 0 \ar[r] &\Hom_{\CC}(-, K)|_{\CU} \ar[r] & \CU(-, U^0) \ar[r]^{\CU(-, f)} & \CU(-, U^1) }\] that yields $Y = \Hom_{\CC}(-, K)|_{\CU}$.
Since $Y= \Hom_{\CC}(-, K)|_{\CU}$ belongs to $\Gprj \CU$, we deduce that $\Ext_{\CU}^i(\Hom_{\CC}(-, K)|_{\CU}, \CU(-, U))=0$ for all object $U$ in $\CU$ and all $i >0$. Thus $\Ext^i_F(K, U)=0$, for all $U \in \CU$ and all $i>0$ and so $K$ belongs to ${}^{\perp_F}\CU$. This means that the functor $\Phi$ is dense. Therefore, the proof is complete.
\end{proof}

\subsection{Induced Galois $G$-(pre)covering on $n$-minimal Auslander-Gorenstein categories}
Let $\CU$ be a $G$-equivariant $n$-precluster tilting subcategory of $\mmod \CC$. According to the proof of Theorem 3.8 of \cite{Gre}, the category $\mmod \CU$ is an $n$-minimal Auslander-Gorenstein category, as defined in Definition \ref{Def}. It will be shown in Proposition \ref{Prop4.5} that the push-down functor $P_*: \mmod \CC \to \mmod(\CC/G)$ induces a $G$-precovering functor $\mmod P_*: \mmod \CU \to \mmod (\CU/G)$. Furthermore, the restriction of this functor, $\mmod P_*|: \Gprj \CU \to \Gprj (\CU/G)$, is dense.
\begin{sdefinition}
Let $\CA$ be an abelian category. We say that $\CA$ has dominant dimension at least $n+1$ and denote it by ${\rm dom.dim } (\CA) \geq n+1$, if every projective object of $\CA$ has an injective resolution whose first $n$ terms are projective objects.
\end{sdefinition}

\begin{sdefinition}\label{Def}
An abelian category $\CA$ is called an $n$-minimal Auslander-Gorenstein category if it satisfies the following conditions:
 \begin{itemize}
 \item[$(i)$] ${\rm dom.dim}(\CA) \geq n + 1$,
 \item[$(ii)$] every projective object has an injective dimension less than or equal to $n + 1$.
 \end{itemize}
\end{sdefinition}

For any two objects $x$ and $y$ in a locally bounded $k$-category $\CC$, let $\CR(\CC(x,y))$ represent the subspace of $\CC(x,y)$ formed by the non-invertible morphisms. A locally bounded $k$-category $ \CC $ is defined as square-free if, for every pair of objects $ x $ and $ y $ in $ \CC $, the quotient $\CR(\CC(x,y))/\CR^2(\CC(x,y))$ has a $ k $-dimension less than or equal to 1. It is known that if $\CC$ is a locally bounded $G$-category, then $\CC/G$ is so, see \cite[Proposition 1.2]{G}. It can be readily verified that if $ \CC $ is a locally bounded $G$-category and $ \CC/G $ is square-free, then $ \CC $ is also square-free. For further details, see \cite[Section 3]{BG}.

Bongartz and Gabriel demonstrated that if $ \CC $ is a locally bounded $G$-category with a free $ G $-action on $ \ind \CC $ such that $ \CC/G $ is square-free, then the category $ \mmod \CC $ is an Auslander category if and only if $\mmod(\CC/G) $ is also an Auslander category \cite[Proposition 3.5]{BG}. The result presented below can be viewed as a generalization of this fact.

\begin{sproposition}\label{BonGab}
Let $\CC$ be a locally bounded $G$-category with free $G$-action on $\ind \CC$ such that $\CC/G$ is square-free. Then $\mmod \CC$ is an $n$-minimal Auslander-Gorenstein category if and only if so is $\mmod(\CC/G)$.
\end{sproposition}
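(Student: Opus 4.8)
The plan is to reduce both conditions of Definition \ref{Def} to statements about the indecomposable projectives $\CC(-,x)$ (resp. $\CC/G(-,u)$) and to transport them across the push-down functor. The structural input I would assemble first is that $P_*$ is exact, that $P_*(\CC(-,x)) \cong \CC/G(-,Px)$ and $P_*(D\CC(x,-)) \cong D(\CC/G(Px,-))$ (as in the proof of Theorem \ref{Main1}), so $P_*$ carries projectives to projectives, injectives to injectives, and projective-injectives to projective-injectives; and since $P \colon \CC \lrt \CC/G$ is surjective on objects, every indecomposable projective, injective, or projective-injective of $\mmod(\CC/G)$ arises as $P_*$ of the corresponding object upstairs. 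Freeness of the $G$-action gives $P_*(S_x) \cong S_{Px}$ for the simple $S_x$ at $x$, so $P_*$ preserves semisimplicity; combined with a dimension count from Lemma \ref{DILemma} in degree $0$, namely $\dim \Hom_{\CC/G}(S_{Px}, P_*N) = \sum_{a \in G}\dim \Hom_{\CC}(S_x, {}^a N)$, this shows that $P_*$ preserves socles, hence injective envelopes, and therefore sends a minimal injective coresolution upstairs to a minimal injective coresolution downstairs.

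Next I would record the reflection principle coming from Theorem \ref{Corres}$(i)$ and the $G$-precovering property: if $M$ is indecomposable and $P_*(M)$ is projective (resp. injective), then $P_*(M) \cong P_*(N)$ for an indecomposable projective (resp. injective) $N$, whence $M \cong {}^a N$ for some $a \in G$, so $M$ itself is projective (resp. injective). Thus $P_*$ both preserves and reflects projective-injectivity. I would also reformulate condition $(i)$: $\mathrm{dom.dim}(\CA) \geq n+1$ holds for a projective $P$ iff there exists an exact sequence $0 \lrt P \lrt W^0 \lrt \cdots \lrt W^{n-1} \lrt C \lrt 0$ with each $W^j$ projective-injective, since the terms of the minimal injective coresolution are summands of the $W^j$ and summands of projective-injectives are projective-injective.

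With these tools the forward implication is immediate: assuming $\mmod\CC$ is $n$-minimal Auslander-Gorenstein, for each $x$ I take a finite injective coresolution of $\CC(-,x)$ of length $\leq n+1$ whose first $n$ terms are projective-injective, and apply the exact functor $P_*$; the images are exact, of length $\leq n+1$, with projective-injective first $n$ terms, so $\CC/G(-,Px) \cong P_*(\CC(-,x))$ satisfies $(i)$ and $(ii)$. For the converse, the injective-dimension bound transfers by Ext: for any $Y \in \mmod\CC$, Lemma \ref{DILemma} gives $\Ext^{n+2}_{\CC/G}(P_*Y, \CC/G(-,Px)) \cong \bigoplus_{a \in G}\Ext^{n+2}_{\CC}(Y, \CC(-,ax))$, and vanishing of the left-hand side forces the $a=1$ summand $\Ext^{n+2}_{\CC}(Y, \CC(-,x))$ to vanish, so $\id_{\CC}\CC(-,x) \leq n+1$. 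For the dominant-dimension bound I take the minimal injective coresolution of $\CC(-,x)$ upstairs, push it down to the minimal injective coresolution of $\CC/G(-,Px)$, read off that its first $n$ terms are projective-injective because $\mmod(\CC/G)$ is $n$-minimal Auslander-Gorenstein, and then reflect projective-injectivity back to conclude that the first $n$ terms upstairs are projective-injective.

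The step I expect to be the main obstacle is precisely this last transfer of the dominant-dimension condition in the converse direction: it hinges on $P_*$ preserving minimality of injective coresolutions and on reflecting projective-injectivity, and it is here that the square-free hypothesis on $\CC/G$ (which also forces $\CC$ to be square-free, \cite[Section 3]{BG}) is used, guaranteeing the clean indecomposable correspondence of projective-injective modules on which the reflection argument rests, in the spirit of \cite[Proposition 3.5]{BG}. Checking carefully that under these hypotheses $P_*$ neither creates nor destroys projective-injective summands is the delicate point of the proof.
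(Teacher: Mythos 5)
Your proposal is correct, and its skeleton coincides with the paper's proof: both reduce to the representables $\CC(-,x)$, push injective coresolutions down through the exact functor $P_*$ (using $P_*(\CC(-,x))\cong \CC/G(-,Px)$ and $P_*(D\CC(x,-))\cong D(\CC/G(Px,-))$), reflect projectivity and injectivity back upstairs, and obtain the injective dimension bound from Lemma \ref{DILemma} by extracting the $a=1$ summand. The differences are nevertheless worth recording. Where the paper outsources preservation and reflection of (projective-)injectivity to \cite[Propositions 3.2 and 3.4]{BG}, you prove them internally, and you add the socle argument ($P_*(S_x)\cong S_{Px}$ plus the degree-zero case of Lemma \ref{DILemma}) showing that $P_*$ preserves injective envelopes and hence carries minimal injective coresolutions to minimal ones. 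This extra care addresses a real soft spot: the paper pushes down an arbitrary injective resolution of $\CC(-,x)$ and asserts that ``by assumption'' its first $n$ terms are projective, which is only legitimate for the \emph{minimal} coresolution downstairs, since an arbitrary coresolution may contain split injective non-projective summands no matter how large the dominant dimension is; your minimality lemma is precisely what makes this step airtight. On the other hand, you misplace the role of the square-free hypothesis. In the Galois setting, reflection of projectivity/injectivity costs nothing: if $M$ is indecomposable and $P_*(M)\cong P_*(N)$ with $N$ an indecomposable projective (or injective), then $P^*P_*(M)\cong\oplus_{a\in G}{}^aM$ together with the Krull--Remak--Schmidt--Azumaya theorem (or \cite[Theorem 1]{W}) yields $M\cong{}^aN$ for some $a\in G$; this is exactly how the paper argues in Theorem \ref{Main2} and Corollary \ref{tau_nSelfinj}, with no square-freeness, and it is also the justification you should supply for your step ``whence $M\cong{}^aN$'' in place of the bare appeal to Theorem \ref{Corres}$(i)$, which only gives preservation of indecomposability. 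Square-freeness enters the paper's proof only through the citations to \cite{BG}, whose propositions concern arbitrary covering functors rather than Galois ones; your self-contained route in fact indicates that, for Galois coverings, this hypothesis plays no role in the argument.
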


\begin{proof}
First, note that according to Proposition 3.4 of \cite{BG}, a $\CC$-module $ M $ in $\mmod \CC$ is injective if and only if $ P_*(M) $ is injective as well.

Assuming that $\mmod(\CC/G)$ is $ n $-minimal Auslander-Gorenstein, consider a projective $\CC$-module $ \CC(-, x) $ along with an injective resolution:
\[0 \to \CC(-, x) \to I^0 \to I^1 \to \cdots \to I^{n-1} \to I^n \to \cdots.\]

Since the push-down functor is exact, this gives us an injective resolution:
\[0 \to P_*(\CC(-, x)) \to P_*(I^0) \to P_*(I^1) \to \cdots \to P_*(I^{n-1}) \to P_*(I^n) \to \cdots\]
of the projective object $ P_*(\CC(-, x)) \cong \CC/G(-, Px) $. By assumption, the first $ n $ terms of this resolution are projective objects. Thus, the first $ n $ terms of the injective resolution of $ \CC(-, x) $ are also projective, thanks to Proposition 3.2 of \cite{BG}. This implies that the dominant dimension of $\mmod \CC$ is at least $ n+1 $.

Next, we show that every projective $\CC$-module $ P = \CC(-, x) $ has an injective dimension less than or equal to $ n+1 $. To demonstrate this, it suffices to verify that $\Ext^i_{\CC}(-, P) = 0$ for all $ i > n+2 $.

By Lemma \ref{DILemma}, for each $\CC$-module $ M $ in $\mmod \CC$,
\[\Ext_{\CC/G}^i(P_*(M), P_*(P)) \cong \bigoplus_{a \in G} \Ext^i_{\CC}(M, {}^a P).\]

By assumption, $\Ext_{\CC/G}^i(P_*(M), P_*(P)) = 0$ for all $ i > n+2 $. Therefore, $\bigoplus_{a \in G} \Ext^i_{\CC}(M, {}^a P)$ vanishes, which implies that its direct summand $\Ext^i_{\CC}(M, P)$ also vanishes for $ i > n+2 $. This completes the proof of the `if' direction.

A similar argument can be made to show the `only if' part.
\end{proof}

An analogue of the higher Auslander-Solberg correspondence for exact categories was stated and proved in \cite{Gre}. If we consider the abelian category $\mmod \CC$ as an exact category, we can directly deduce the following theorem from \cite[Theorem 3.8]{Gre}.

\begin{stheorem}
For every integer $ n \geq 0 $, the mapping $ \mathcal{U} \mapsto \text{mod} \, \mathcal{U} $ establishes a bijective correspondence between:
\begin{itemize}
 \item[$(i)$] the equivalence classes of $ n $-precluster tilting subcategories of $ \mmod\mathcal{C} $ for some locally bounded $ k $-category $ \mathcal{C} $,
 \item[$(ii)$] the equivalence classes of $ n $-minimal Auslander-Gorenstein categories.
\end{itemize}
\end{stheorem}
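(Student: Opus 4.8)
The plan is to deduce the statement by specializing the exact-category form of the higher Auslander--Solberg correspondence, \cite[Theorem 3.8]{Gre}, to the abelian category $\mmod\CC$ equipped with its canonical exact structure. The first task is therefore to confirm that $\mmod\CC$ is an admissible input for that theorem. For a locally bounded $k$-category $\CC$ the category $\mmod\CC$ is a Hom-finite, idempotent-complete, Krull--Schmidt abelian category in which finitely generated and finitely presented modules coincide and every object has finite length (Remark \ref{flength}); it has enough projectives, namely $\{\CC(-,x)\mid x\in\CC\}$, and enough injectives, namely $\{D\CC(x,-)\mid x\in\CC\}$, and the duality $D\colon\mmod\CC\to\mmod\CC^{\op}$ exhibits it as a dualizing $k$-variety. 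These are exactly the hypotheses under which the translates $\tau_n,\tau_n^-$ and the whole $n$-precluster tilting formalism are defined, so $\mmod\CC$ sits inside the class of exact categories treated in \cite{Gre}.

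Next I would check that the two sides of the correspondence match the ones in \cite{Gre} verbatim. On the source side, an $n$-precluster tilting subcategory of $\mmod\CC$ in the sense of Definition \ref{n-precluster} (taken with trivial group, i.e. the notion of \cite{IS}) is precisely Gre's $n$-precluster tilting subcategory of the exact category $\mmod\CC$: functorial finiteness, the generator--cogenerator property, stability under $\tau_n$ and $\tau_n^-$, and the vanishing $\Ext^i_\CC(\CU,\CU)=0$ for $0<i<n$ transcribe without change. On the target side, $\mmod\CU$ being an $n$-minimal Auslander--Gorenstein category in the sense of Definition \ref{Def} is the same condition used in \cite{Gre}; that the assignment $\CU\mapsto\mmod\CU$ indeed lands there is already recorded at the opening of this subsection (following the proof of \cite[Theorem 3.8]{Gre}). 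Hence the forward map is well defined.

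It remains to establish bijectivity on equivalence classes. Injectivity is the easy direction: by the Yoneda lemma the projective objects of $\mmod\CU$ are exactly $\{\CU(-,U)\mid U\in\CU\}$, so $\prj(\mmod\CU)\simeq\CU$ as additive categories; consequently any equivalence $\mmod\CU\simeq\mmod\CU'$ restricts to an equivalence $\CU\simeq\CU'$, and equivalent subcategories plainly produce equivalent functor categories. For surjectivity, given an $n$-minimal Auslander--Gorenstein category $\CA$ I would set $\CU:=\prj\CA$ and invoke \cite[Theorem 3.8]{Gre} to obtain both an equivalence $\CA\simeq\mmod\CU$ and the fact that $\CU$ is $n$-precluster tilting inside its ambient exact category. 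To meet our formulation one must realise that ambient category as $\mmod\CC$ for a genuine locally bounded $\CC$: the condition $\mathrm{dom.dim}(\CA)\geq n+1\geq 1$ guarantees that $\CA$ has enough projective-injective objects, and one recovers $\CC$ (respectively $\CC^{\op}$) as the additive category of those projective-injectives, with morphism spaces given by the corresponding $\Hom$-spaces in $\CA$ via Yoneda; here the modules $\CC(-,x)$ and $D\CC(x,-)$ reappear as the projective-injective objects lying in $\CU$.

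The main obstacle is exactly this last recovery step: verifying that the category $\CC$ extracted from the projective-injective objects of $\CA$ really is locally bounded in the sense of Definition \ref{LBB}, i.e. has local endomorphism rings, finite-dimensional morphism spaces, and finite neighborhoods. This is where the finiteness built into $\mmod\CC$ must be run in reverse: Hom-finiteness and the Krull--Schmidt property of $\CA$ yield the local endomorphism rings and the finite-dimensional $\Hom$-spaces, while the finite-length structure (Remark \ref{flength}) together with the bound on injective dimensions of projectives forces each object of $\CC$ to have only finitely many neighbors. Once local boundedness of $\CC$ is secured, the equivalence $\CA\simeq\mmod\CU$ with $\CU\subseteq\mmod\CC$ places $\CA$ in the image of the map, and the correspondence of \cite[Theorem 3.8]{Gre} transfers verbatim; all remaining verifications are routine transcriptions of the exact-category argument.
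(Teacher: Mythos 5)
Your proposal takes essentially the same route as the paper: the paper's entire argument is to view $\mmod\CC$ as an exact category and deduce the statement directly from \cite[Theorem 3.8]{Gre}, which is precisely your strategy. The additional verifications you sketch (matching the definitions on both sides, recovering $\CC$ from the projective-injective objects, and checking local boundedness) are exactly the details the paper leaves implicit in the phrase ``directly deduce,'' and your treatment of them is sound.
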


Note that a $G$-action $A$ on the category $\CC$ can be extended to a $G$-action $\mmod \bar{A}$ on the category $\mmod (\mmod \CC)$. Specifically, for each object $T$ in $\mmod (\mmod \CC)$ and each element $a$ of $G$, we define ${}_a T := (\mmod \bar{A}) (a) (T)$ as follows: for every finitely presented $\CC$-module $X$ and every morphism $f$, we set ${}_a T(X) = T({}^{a^{-1}} X)$ and ${}_a T(f) = T({}^{a^{-1}} f)$. Here, ${}^{a^{-1}} X$ and ${}^{a^{-1}} f$ are determined by the action of $G$ on $\mmod \CC$, as introduced in section \ref{action}.

According to \cite[Proposition 5.26]{HAK}, there exists a functor
\[\mmod P_*: \mmod (\mmod \CC) \to \mmod (\mmod(\CC/G)).\]
In fact, let
\[T= \text{Coker}(\Hom_{\CC}(-,X) \xrightarrow{\Hom_{\CC}(-,f)} \Hom_{\CC}(-,Y)),\]
then we have
\[\mmod P_*(T) = \text{Coker}(\Hom_{\CC}(-,P_* X) \xrightarrow{\Hom_{\CC}(-,P_* f)} \Hom_{\CC}(-,P_* Y)).\]

\begin{sproposition}\label{Prop4.5}
Let $\CC$ be a locally bounded $G$-category with free $G$-action on $\ind \CC$.
\begin{itemize}
\item[(i)] If $\CU$ is a $G$-equivariant $n$-precluster tilting subcategory of $\mmod \CC$ such that $P_{*}(\CU)$ is functorially finite in $\mmod(\CC/G)$, then $\mmod P_*: \mmod \CU \lrt \mmod (\CU/G)$ is a $G$-precovering of $n$-minimal Auslander-Gorenstein categories.
\item[(ii)] If $\CC$ is, in addition, locally support-finite and $\CU$ is a $G$-equivariant $n$-precluster tilting subcategory of $\mmod \CC$, then $\mmod P_*: \mmod \CU \lrt \mmod (\CU/G)$ is a Galois $G$-precovering of $n$-minimal Auslander-Gorenstein categories that its restriction \\ $\mmod P_*|: \Gprj \CU \lrt \Gprj (\CU/G)$ is dense.
\end{itemize}
\end{sproposition}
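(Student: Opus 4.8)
The plan is to handle the two parts separately, reducing each to machinery already assembled in the paper. For part $(i)$ I would first identify the source and the target as $n$-minimal Auslander-Gorenstein categories. By Theorem \ref{Main1}$(i)$ the subcategory $P_*(\CU)\simeq \CU/G$ (the identification $P_*(\CU)\simeq\CU/G$ being \cite[Lemma 3.5]{DI}) is an $n$-precluster tilting subcategory of $\mmod(\CC/G)$; hence, invoking the higher Auslander-Solberg correspondence of \cite[Theorem 3.8]{Gre} as recorded just before Definition \ref{Def}, both $\mmod\CU$ and $\mmod(\CU/G)$ are $n$-minimal Auslander-Gorenstein categories. The functor $\mmod P_*$ itself exists by \cite[Proposition 5.26]{HAK}, and the induced $G$-action on $\mmod\CU$ is the restriction of the action $T\mapsto {}_aT$ on $\mmod(\mmod\CC)$ described above, which is legitimate since $\CU$ is $G$-equivariant. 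It then remains to verify that $\mmod P_*$ is a $G$-precovering, i.e. to establish the $\k$-linear isomorphism
\[
\bigoplus_{a\in G}\Hom_{\mmod\CU}({}_aT,S)\ \xrightarrow{\ \sim\ }\ \Hom_{\mmod(\CU/G)}\bigl(\mmod P_*(T),\mmod P_*(S)\bigr)
\]
for all $T,S\in\mmod\CU$ (together with its left-hand analogue).

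I would prove this isomorphism first on representable functors $T=\CU(-,U)$, $S=\CU(-,V)$. There Yoneda collapses both sides to Hom-spaces in $\CU$ and in $\CU/G$; using ${}_a\CU(-,U)\cong\CU(-,{}^aU)$ one gets $\bigoplus_a\Hom_{\mmod\CU}({}_a\CU(-,U),\CU(-,V))\cong\bigoplus_a\Hom_{\CC}({}^aU,V)$, while $\mmod P_*$ sends representables to representables, so the right-hand side is $\Hom_{\CU/G}(P_*U,P_*V)$. These are identified by the Galois $G$-(pre)covering isomorphism of Definition \ref{k-iso}$(iii)$, since $P_*|\colon\CU\to\CU/G$ is the restriction of the Galois $G$-precovering $P_*$ of Theorem \ref{Corres}$(i)$. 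For general $T,S$ I would take presentations $\CU(-,U_1)\to\CU(-,U_0)\to T\to 0$ by representables, use that $\mmod P_*$ is right exact and commutes with the action, and run a five-lemma dévissage that propagates the isomorphism from representables to all objects; finiteness of the sum over $G$ is guaranteed because only finitely many liftings are nonzero (cf.\ \ref{lifting}).

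For part $(ii)$, the extra hypothesis that $\CC$ is locally support-finite makes $P_*(\CU)$ automatically functorially finite by Lemma \ref{approximation}, so part $(i)$ applies and $\mmod P_*$ is a $G$-precovering of $n$-minimal Auslander-Gorenstein categories; the density of the underlying push-down (Proposition \ref{loc-supp-fin}) is what lets one upgrade this to a Galois $G$-precovering. The remaining assertion is that the restriction $\mmod P_*|\colon\Gprj\CU\to\Gprj(\CU/G)$ is dense. Here I would use the equivalence $\Phi\colon\CZ(\CU)\to\Gprj\CU$, $X\mapsto\Hom_{\CC}(-,X)|_{\CU}$, of Proposition \ref{equivalence}, and its counterpart $\Phi'\colon\CZ(\CU/G)\to\Gprj(\CU/G)$, $Y\mapsto\Hom_{\CC/G}(-,Y)|_{\CU/G}$. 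The crux is the commutativity, up to natural isomorphism, of the square
\[
\mmod P_*\circ\Phi\ \cong\ \Phi'\circ P_*\colon\ \CZ(\CU)\lrt\Gprj(\CU/G),
\]
that is, $\mmod P_*\bigl(\Hom_{\CC}(-,X)|_{\CU}\bigr)\cong\Hom_{\CC/G}(-,P_*X)|_{\CU/G}$ for $X\in\CZ(\CU)$.

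To verify the square I would take a $\Hom_{\CC}(\CU,-)$-exact resolution $\cdots\to\CU(-,U_1)\to\CU(-,U_0)\to\Hom_{\CC}(-,X)|_{\CU}\to 0$, available from the proof of Proposition \ref{equivalence}, apply the exact functor $\mmod P_*$, and identify the resulting cokernel using that $P_*$ is exact and carries the approximation sequence $\cdots\to U_1\to U_0\to X\to 0$ to one computing $\Hom_{\CC/G}(-,P_*X)|_{\CU/G}$, noting $P_*X\in\CZ(\CU/G)$ by Proposition \ref{Z(U)}. Granting the square, density of $\mmod P_*|$ is immediate: $P_*\colon\CZ(\CU)\to\CZ(\CU/G)$ is dense by Proposition \ref{Z(U)}$(ii)$, and $\Phi,\Phi'$ are equivalences, hence dense. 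I expect the main obstacle throughout to be exactly this bookkeeping, namely extending the precovering Hom-isomorphism from representables to arbitrary objects and establishing the compatibility of $\mmod P_*$ with $\Phi$ and $\Phi'$, since both steps require tracking carefully how $\mmod P_*$ acts on presentations and how the induced $G$-action meshes with the orbit-category structure on $\CU/G$.
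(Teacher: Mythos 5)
Your proposal is correct and follows essentially the same route as the paper: part $(i)$ is reduced to Theorem \ref{Main1} together with \cite[Proposition 5.26]{HAK}, and part $(ii)$ rests on the same commutative square relating $P_*\colon \CZ(\CU)\lrt\CZ(\CU/G)$ (dense by Proposition \ref{Z(U)}) with the equivalences $\Phi$ of Proposition \ref{equivalence}. The only difference is one of detail: you spell out what the paper outsources or merely asserts, namely the $G$-precovering property of $\mmod P_*$ (via representables plus d\'evissage) and the commutativity of the square, both of which are sound.
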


\begin{proof}
$(i)$ This follows directly from Theorem \ref{Main1} and \cite[Proposition 5.26]{HAK}.
	
$(ii)$ We have the following commutative diagram
\[\xymatrix{
		\Gprj \CU \ar[rr]^{\mmod P_*|} && \Gprj (\CU/G) \\
		\CZ(\CU) \ar[u]^{\Phi} \ar[rr]^{P_*}&& \CZ(\CU/G) \ar[u]^{\Phi}
	}\]
such that $\Phi$ is an equivalence of categories, by Proposition \ref{equivalence}. Since by Proposition \ref{Z(U)}, $P_*: \CZ(\CU) \lrt \CZ(\CU/G)$ is dense, $\mmod P_*|: \Gprj \CU \lrt \Gprj (\CU/G)$ is also a dense functor as well.
\end{proof}

\begin{scorollary}
	Let $\CC$ be a locally support-finite $G$-category with free $G$-action on $\ind \CC$. If $\CU$ is a $G$-equivariant $n$-precluster tilting subcategory of $\mmod \CC$, then there is the following commutative diagram
	\[\xymatrix{& \CZ(\CU) \ar[rr]^{P_*}\ar[ld]_{\simeq} && \CZ(\CU/G) \ar[ld]_{\simeq}\\
		\Gprj \CU \ar[rr]^{\mmod P_*|} \ar[dd] && \Gprj \CU/G \ar[dd]
		\\
		& \CU \ar[rr]^{P_*} \ar@{^(->}[uu] \ar@{_(->}[ld] && \CU/G \ar@{^(->}[uu] \ar@{_(->}[ld] \\
		\mmod \CU \ar[rr]^{\mmod P_*} && \mmod \CU/G }\]
	with Galois $G$-precovering rows such that $P_*: \CU \lrt \CU/G$, $P_*: \CZ(\CU) \lrt \CZ(\CU/G) $ and $\mmod P_*|: \Gprj \CU \lrt \Gprj (\CU/G)$ are dense.
\end{scorollary}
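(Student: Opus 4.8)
The plan is to read the corollary as a synthesis of the three preceding propositions, so that almost every arrow and every claim is a citation, and then to isolate the one genuinely new verification. I would begin by observing that both $\CU$ and $\CZ(\CU)={}^{\perp_{n-1}}\CU=\CU^{\perp_{n-1}}$ are full $G$-equivariant subcategories of $\mmod\CC$: the $G$-equivariance of $\CU$ is part of the hypothesis, and that of $\CZ(\CU)$ follows since ${}^{a}(-)$ is an auto-equivalence, so ${}^{a}(\CU^{\perp_{n-1}})=({}^{a}\CU)^{\perp_{n-1}}=\CU^{\perp_{n-1}}$. Because the defining isomorphism $\bigoplus_{a\in G}\Hom_{\CC}(X,{}^{a}Y)\cong\Hom_{\CC/G}(P_*X,P_*Y)$ of Theorem \ref{Corres}$(i)$ restricts verbatim to any full $G$-equivariant subcategory, both the $\CU$-row $P_*\colon\CU\lrt\CU/G$ and the $\CZ$-row $P_*\colon\CZ(\CU)\lrt\CZ(\CU/G)$ are Galois $G$-precoverings. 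For the $\CU$-row density is immediate, since $\CU/G$ denotes the essential image $P_*(\CU)$, which is $n$-precluster tilting by Theorem \ref{Main1}; for the $\CZ$-row density is exactly Proposition \ref{Z(U)}$(ii)$.

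Next I would install the vertical equivalences $\Phi\colon\CZ(\CU)\lrt\Gprj\CU$ and $\Phi\colon\CZ(\CU/G)\lrt\Gprj(\CU/G)$ from Proposition \ref{equivalence}. The commutativity of the top square and the density of $\mmod P_*|\colon\Gprj\CU\lrt\Gprj(\CU/G)$ are precisely the content of the commutative diagram in Proposition \ref{Prop4.5}$(ii)$, and the $\mmod\CU$-row $\mmod P_*\colon\mmod\CU\lrt\mmod(\CU/G)$ is a Galois $G$-precovering by the same proposition. To see that the $\Gprj$-row is itself a Galois $G$-precovering I would transport the property across $\Phi$: a direct comparison of the two $G$-actions gives $\Phi({}^{a}X)\cong{}_{a}\Phi(X)$, so $\Phi$ is $G$-equivariant, and combining this with the commutativity of the top square, the Hom-isomorphism for $\mmod P_*|$ follows from that for $P_*\colon\CZ(\CU)\lrt\CZ(\CU/G)$ together with the fact that every object of $\Gprj\CU$ is isomorphic to some $\Phi(X)$.

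It then remains only to check the commutativity of the two lower squares, and this reduces to a single routine identity. Both vertical maps issuing from $\CU$, namely the inclusion $\CU\hookrightarrow\Gprj\CU$ and the Yoneda embedding $\CU\hookrightarrow\mmod\CU$, send an object $U$ to the representable $\CU(-,U)=\Hom_{\CC}(-,U)|_{\CU}$, and the explicit formula for $\mmod P_*$ recorded before Proposition \ref{Prop4.5} gives $\mmod P_*(\CU(-,U))\cong(\CU/G)(-,P_*U)$; hence both squares commute, the triangle $\CU\hookrightarrow\Gprj\CU\hookrightarrow\mmod\CU$ collapses to the Yoneda embedding, and the square relating the inclusions $\Gprj\hookrightarrow\mmod$ commutes trivially because $\mmod P_*|$ is by definition the restriction of $\mmod P_*$. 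I expect the main obstacle to be purely bookkeeping: keeping the two distinct $G$-actions --- the one on $\mmod\CC$ from \ref{action} and the one on $\mmod(\mmod\CC)$ introduced before Proposition \ref{Prop4.5} --- correctly aligned when verifying that $\Phi$ is $G$-equivariant, since this is exactly what licenses transporting the precovering property to the $\Gprj$-row. Everything else is a direct appeal to Theorem \ref{Main1} and Propositions \ref{equivalence}, \ref{Z(U)} and \ref{Prop4.5}.
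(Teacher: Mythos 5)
Your proposal is correct and coincides with the paper's intended argument: the paper states this corollary without a separate proof, as an immediate assembly of Theorem \ref{Main1} together with Lemma \ref{approximation} (which makes $P_{*}(\CU)$ functorially finite under the locally support-finite hypothesis), Proposition \ref{Z(U)}, Proposition \ref{equivalence}, and Proposition \ref{Prop4.5}, which is exactly the synthesis you carry out. Your additional verifications --- the $G$-equivariance of $\CZ(\CU)$, the identity $\Phi({}^{a}X)\cong{}_{a}\Phi(X)$ used to transport the precovering property to the $\Gprj$-row, and the Yoneda computation for the lower squares --- are precisely the routine details the paper leaves implicit.
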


\section{Induced Galois $ G$-(pre)covering on support tilting pairs}\label{tau_nTilting}
A full subcategory $\CU$ of $\mmod \CC$ is defined as $n$-cluster tilting if it is functorially finite in $\mmod \CC$ and satisfies the condition:
$${}^{\perp_{n-1}} \CU = \CU = \CU^{\perp_{n-1}}.$$

It has been established in \cite[Theorem 2.14]{DI} that if $\CC$ is a locally bounded $G$-category with a free $G$-action on $\ind \CC$, and if $\CM$ is a $G$-equivariant $n$-cluster tilting subcategory of $\mmod \CC$ such that $P_*(\CM)$ is a functorially finite subcategory of $\mmod(\CC/G)$, then $P_*(\CM)$ is an $n$-cluster tilting subcategory of $\mmod(\CC/G)$.

Zhou and Zhu, in \cite{ZZ}, introduced $\tau_n$-tilting objects for every projectively generated $n$-abelian category. This is equivalent to being an $n$-cluster tilting subcategory of an abelian category that has enough projectives.

\begin{definition}\label{t_n tilt}
Let $\CC$ be a locally bounded $G$-category and $\CM$ be a $G$-equivariant $n$-cluster tilting subcategory of $\mmod \CC$.
	
\begin{itemize}
	\item[$(i)$] An object $M$ of $\CM$ is called $(G, \tau_n)$-rigid, if $\Hom_{\CC}(M , {}^a \tau_n M)=0$ for all $a \in G$.
	\item[$(ii)$] A pair $(M, P)$ in $\CM$ with $P$ projective is called $(G, \tau_n)$-rigid pair, if $M$ is $(G, \tau_n)$-rigid and $\Hom_{\CC}(P, {}^a M)=0$ for all $a \in G$.
	\item[$(iii)$] A $(G, \tau_n)$-rigid pair $(M, P)$ in $\CM$ is called support $(G, \tau_n)$-tilting pair, if it satisfies the following two conditions
	\begin{enumerate}
		\item If $(M\oplus N, P)$ is a $(G, \tau_n)$-rigid pair for some $N \in \CM$, then there is a finite subset $H$ of $G$ such that $N \in \add \oplus_{a \in H} {}^a M$.
		\item For every indecomposable projective $\CC$-module $Q$, $Q \in \add {}^a P$ for some $a \in G$ if and only if $\Hom_{\CC}(Q, {}^a M)=0$ for all $a \in G$.
	\end{enumerate}
\end{itemize}
\end{definition}

\begin{sproposition}\label{tau_nTilt}
Let $\CC$ be a locally bounded $G$-category with a free $G$-action on $\ind \CC$. Assume that $\CM$ is an $n$-cluster tilting subcategory of $\mmod \CC$ such that $P_*(\CM)$ is an $n$-cluster tilting subcategory of $\mmod(\CC/G)$. If $(M,P)$ in $\CM$ is a support $(G, \tau_n)$-tilting pair, then $(P_*(M), P_*(P))$ is a support $\tau_n$-tilting pair in $P_*(\CM)$.
\end{sproposition}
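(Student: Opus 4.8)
The plan is to verify directly that $(P_*(M), P_*(P))$ satisfies every clause of the definition of a support $\tau_n$-tilting pair in $P_*(\CM)$ (the specialization of Definition \ref{t_n tilt} to the trivial group acting on $\CC/G$). The two facts that drive everything are the Hom-isomorphism attached to the Galois $G$-precovering $P_*$, namely
\[\Hom_{\CC/G}(P_*(X), P_*(Y)) \cong \bigoplus_{a \in G} \Hom_{\CC}(X, {}^a Y)\]
from \ref{lifting}, together with the natural isomorphism $P_* \tau_n \cong \tau_n P_*$ established in the proof of Proposition \ref{P_n(C)}. I shall also use that ${}^a(-)$ commutes with $\tau_n$ (the $G$-action is by autoequivalences preserving projectives and the transpose), that $P_*(\CC(-,x)) \cong \CC/G(-, Px)$, and that $P_*$ preserves indecomposability by Theorem \ref{Corres}. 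Since $\CM$ is additively closed and $P_*$ preserves and reflects indecomposable summands, every object of $P_*(\CM)$ is of the form $P_*(X')$ with $X' \in \CM$, so no local support-finiteness assumption is needed here.

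First I would dispose of rigidity. Combining the two isomorphisms gives
\[\Hom_{\CC/G}(P_*(M), \tau_n P_*(M)) \cong \bigoplus_{a \in G} \Hom_{\CC}(M, {}^a \tau_n M) = 0,\]
the vanishing being exactly the $(G, \tau_n)$-rigidity of $M$; hence $P_*(M)$ is $\tau_n$-rigid. Likewise $\Hom_{\CC/G}(P_*(P), P_*(M)) \cong \bigoplus_{a \in G} \Hom_{\CC}(P, {}^a M) = 0$, so $(P_*(M), P_*(P))$ is a $\tau_n$-rigid pair.

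The crux is condition (1). Suppose $(P_*(M) \oplus N', P_*(P))$ is a $\tau_n$-rigid pair with $N' \cong P_*(X')$, $X' \in \CM$. I would translate every vanishing coming from this hypothesis back to $\CC$ through the Hom-isomorphism: $\tau_n$-rigidity of $P_*(M) \oplus P_*(X')$ unpacks, via the four summand-pairs and $P_* \tau_n \cong \tau_n P_*$, into $\Hom_{\CC}(M, {}^a \tau_n X') = 0$, $\Hom_{\CC}(X', {}^a \tau_n M) = 0$ and $\Hom_{\CC}(X', {}^a \tau_n X') = 0$ for all $a$, while $\Hom_{\CC/G}(P_*(P), P_*(X')) = 0$ yields $\Hom_{\CC}(P, {}^a X') = 0$ for all $a$. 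Together with the rigidity already known for $(M,P)$, and using ${}^a(-) \circ \tau_n \cong \tau_n \circ {}^a(-)$, these say precisely that $(M \oplus X', P)$ is a $(G, \tau_n)$-rigid pair in $\CM$. Applying condition (1) for the support $(G, \tau_n)$-tilting pair $(M, P)$ then gives a finite $H \subseteq G$ with $X' \in \add \bigoplus_{a \in H} {}^a M$; pushing down and using $P_*({}^a M) \cong P_*(M)$ yields $N' \cong P_*(X') \in \add P_*(M)$, as required. I expect this lifting step to be the main obstacle, since it requires checking that all cross-terms vanish and that the full $G$-equivariant rigidity is recovered before the maximality clause for $(M, P)$ can be invoked.

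Finally, for condition (2), every indecomposable projective $\CC/G$-module has the form $P_*(Q)$ with $Q = \CC(-, x)$ an indecomposable projective $\CC$-module. The Hom-isomorphism gives that $\Hom_{\CC/G}(P_*(Q), P_*(M)) = 0$ is equivalent to $\Hom_{\CC}(Q, {}^a M) = 0$ for all $a$, which by condition (2) for $(M, P)$ is in turn equivalent to $Q \in \add {}^a P$ for some $a \in G$. It remains to match this with membership in $\add P_*(P)$: if $Q \in \add {}^a P$ then applying $P_*$ gives $P_*(Q) \in \add P_*({}^a P) = \add P_*(P)$; conversely, if $P_*(Q) \in \add P_*(P)$ then, $P_*(Q)$ being indecomposable, it is isomorphic to $P_*(\CC(-, y))$ for an indecomposable summand $\CC(-, y)$ of $P$, and $\CC/G(-, Px) \cong \CC/G(-, Py)$ forces $Px = Py$, i.e. $y = ax$ for some $a \in G$, whence $Q \cong {}^{a^{-1}} \CC(-, y) \in \add {}^{a^{-1}} P$. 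This closes the equivalence and completes the verification.
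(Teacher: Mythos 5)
Your proposal is correct and follows essentially the same route as the paper's proof: rigidity via the precovering Hom-isomorphism combined with $P_*\tau_n \cong \tau_n P_*$, lifting a rigid pair $(P_*(M)\oplus P_*(X'), P_*(P))$ back to a $(G,\tau_n)$-rigid pair $(M\oplus X', P)$ in $\mmod \CC$ so that the support $(G,\tau_n)$-tilting property of $(M,P)$ applies, and handling condition (2) through $P_*(\CC(-,x))\cong \CC/G(-,Px)$. The only cosmetic difference is in the converse half of condition (2), where the paper simply invokes the already-established rigidity of the pair $(P_*(M),P_*(P))$ to get $\Hom_{\CC/G}(\bar{Q},P_*(M))=0$ for $\bar{Q}\in \add P_*(P)$, whereas you give a slightly longer (but equally valid) Krull--Schmidt argument identifying $Q$ with a $G$-twist of a summand of $P$.
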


\begin{proof}
First observe that in view of the proof of Theorem \ref{P_n(C)}, $P_*(\tau_n M) \cong \tau_n P_*(M)$. Therefore there exist the isomorphisms
\[ \begin{array}{ll}
		\Hom_{\CC/G}(P_*(M), \tau_n P_*(M)) & \cong \Hom_{\CC/G}(P_*(M), P_*(\tau_n M))\\
		& \cong \oplus_{a \in G} \Hom_{\CC}(M, {}^a \tau_n M)
	\end{array}\]
that imply the $ \tau_n$-rigidity of $P_*(M)$. The same argument show that $(P_*(M), P_*(P))$ is a $\tau_n$-rigid pair.
	
Let there exist an object $P_*(N)$ of $P_*(\CM)$ such that $(P_*(M) \oplus P_*(N), P_*(P))$ is a $\tau_n$-rigid pair. There are the following isomorphisms
\[ \begin{array}{lll}
	\Hom_{\CC/G}(P_*(M) \oplus P_*(N), \tau_n(P_*(M) \oplus P_*(N))) & \cong \Hom_{\CC/G} (P_*(M\oplus N), \tau_n(P_*(M \oplus N)))\\
	& \cong \Hom_{\CC/G} (P_*(M\oplus N), P_*(\tau_n(M \oplus N)))\\
	& \cong \oplus_{a \in G} \Hom_{\CC}(M\oplus N , {}^a \tau_n (M \oplus N))	\end{array}\]
and
\[ \begin{array}{lll}
			\Hom_{\CC/G}(P_*(P) , P_*(M) \oplus P_*(N)) & \cong \Hom_{\CC/G} (P_*(P), P_*(M\oplus N)))\\
			& \cong \oplus_{a \in G} \Hom_{\CC}(P, {}^a (M \oplus N)). \end{array}\]

That is a pair $(M \oplus N, P)$ is a $(G, \tau_n)$-rigid pair in $\mmod \CC$. Hence, there is a finite subset $H$ of $G$ such that $N$ lies in $\add (\oplus_{a \in H} {}^a M)$. This, in turn, implies that $P_*(N)$ belongs to $\add P_*(M)$.
		
Finally, consider a finitely generated projective $\CC/G$-module ${\bar Q}$. Let $\Hom_{\CC/G}(\bar{Q}, P_*(M))=0$. Since every finitely generated projective $\CC/G$-module can be written as finite direct sums of $\CC/G(-,Px)$, where $x$ runs through objects of $\CC$, there is a finitely generated projective $\CC$-module $Q$ such that $P_*(Q)\cong \bar{Q}$. Therefore, we have an isomorphism
\[ \Hom_{\CC/G} (\bar{Q}, P_*(M))\cong \oplus_{a \in G} \Hom_{\CC}(Q, {}^a M)\]
that implies $\Hom_{\CC}(Q, {}^a M)=0$, for all $a \in G$. By definition, $Q$ lies in $\add {}^a P$ for some $a \in G$. Since the push-down functor $P_*$ preserves direct sums, $P_*(Q) \cong \bar{Q}$ belongs to $\add P_*(P)$.
Conversely, it is plain that $\Hom_{\CC/G}(\bar{Q}, P_*(M))=0$ for every $\bar{Q}\in \add P_*(P)$. So, we deduce that $(P_*(M), P_*(P))$ is a support $\tau_n$-tilting pair in $\mmod(\CC/G)$.
\end{proof}

\begin{corollary}\label{T_n-tiltpreserves}
Let $\CC$ be a locally bounded $G$-category with a free $G$-action on $\ind \CC$.
\begin{itemize}
\item[$(i)$] Assume that $\CM$ is a $G$-equivariant $n$-cluster tilting subcategory of $\mmod \CC$ such that $P_*(\CM)$ is a functorially finite subcategory of $\mmod(\CC/G)$. If $(M,P)$ in $\CM$ is a support $(G, \tau_n)$-tilting pair, then $(P_*(M), P_*(P))$ is a support $\tau_n$-tilting pair in $P_*(\CM)$.
		
\item[$(ii)$] Assume that, in addition, $\CC$ is locally support-finite and $\CM$ is an $n$-cluster tilting subcategory of $\mmod \CC$. If $(M,P)$ in $\CM$ is a support $(G, \tau_n)$-tilting pair, then $(P_*(M), P_*(P))$ is a support $\tau_n$-tilting pair in $P_*(\CM)$.
\end{itemize}
\end{corollary}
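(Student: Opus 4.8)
The plan is to reduce both parts to the immediately preceding Proposition \ref{tau_nTilt}. That proposition already delivers exactly the desired conclusion---that $(P_*(M), P_*(P))$ is a support $\tau_n$-tilting pair in $P_*(\CM)$---under the single extra hypothesis that $P_*(\CM)$ be an $n$-cluster tilting subcategory of $\mmod(\CC/G)$. So the whole problem reduces to checking, in each case, that the push-down image $P_*(\CM)$ is $n$-cluster tilting; once that is in place, Proposition \ref{tau_nTilt} applies and finishes the argument.

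For part $(i)$, I would simply quote the transfer result of Darp\"{o} and Iyama recorded just before Definition \ref{t_n tilt}, namely \cite[Theorem 2.14]{DI}: for a locally bounded $G$-category with free $G$-action on $\ind \CC$, a $G$-equivariant $n$-cluster tilting subcategory $\CM$ whose push-down $P_*(\CM)$ is functorially finite is carried by $P_*$ to an $n$-cluster tilting subcategory of $\mmod(\CC/G)$. These are precisely the hypotheses of $(i)$, so $P_*(\CM)$ is $n$-cluster tilting and Proposition \ref{tau_nTilt} applies verbatim.

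For part $(ii)$, the only missing ingredient is the functorial finiteness of $P_*(\CM)$, since now $\CC$ is assumed locally support-finite. Because an $n$-cluster tilting subcategory is by definition functorially finite, is $G$-equivariant, and is closed under finite direct sums, Lemma \ref{approximation} guarantees that $P_*(\CM)$ is functorially finite in $\mmod(\CC/G)$. With this established I fall back on part $(i)$ to conclude that $P_*(\CM)$ is $n$-cluster tilting, and hence that $(P_*(M),P_*(P))$ is a support $\tau_n$-tilting pair.

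There is no serious obstacle here: the corollary is a packaging of Proposition \ref{tau_nTilt} together with the two covering-theoretic inputs \cite[Theorem 2.14]{DI} and Lemma \ref{approximation}. The only point that must be watched is that the $G$-equivariance of $\CM$ is genuinely needed in both inputs---inside \cite[Theorem 2.14]{DI} to transport the $n$-cluster tilting structure, and in the hypothesis of Lemma \ref{approximation} to transport the approximation data---so I would make sure this assumption is carried over into part $(ii)$ rather than silently dropped.
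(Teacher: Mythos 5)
Your proof is correct and takes essentially the same route as the paper's: part $(i)$ is exactly \cite[Theorem 2.14]{DI} followed by Proposition \ref{tau_nTilt}, and part $(ii)$ supplies functorial finiteness of $P_*(\CM)$ via Lemma \ref{approximation} and then invokes part $(i)$. Your closing remark that the $G$-equivariance of $\CM$ must be understood as carried over into part $(ii)$ (it is what the paper's ``in addition'' presupposes, and both Lemma \ref{approximation} and \cite[Theorem 2.14]{DI} genuinely need it) is a careful and accurate reading.
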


\begin{proof}
$(i)$ It follows from Theorem 2.14 of \cite{DI} that $P_*(\CM)$ is an $n$-cluster tilting subcategory of $\mmod(\CC/G)$. The result follows from Proposition \ref{tau_nTilt}.

$(ii)$ Note that $P_*(\CM)$ is functorially finite thanks to Lemma \ref{approximation}. Now, $(i)$ yields the result.
\end{proof}

\begin{sproposition}\label{PullupT_ntilt}
Let $\CC$ be a locally bounded $G$-category with a free $G$-action on $\ind \CC$. Let $\CM$ be a $G$-equivariant subcategory of $\mmod \CC$ such that $P_*(\CM)$ is an $n$-cluster tilting subcategory of $\mmod(\CC/G)$. If $M$ is an object of $\CM$ such that $(P_*(M), P_*(P))$ is a support $\tau_n$-tilting pair in $P_*(\CM)$, then $(M, P)$ is a support $(G, \tau_n)$-tilting pair in $\CM$.
\end{sproposition}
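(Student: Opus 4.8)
The plan is to run the argument of Proposition~\ref{tau_nTilt} in reverse. The point is that the two structural tools used there---the natural isomorphism $P_*\tau_n\cong\tau_n P_*$ established in the proof of Proposition~\ref{P_n(C)}, and the Galois-precovering isomorphism $\Hom_{\CC/G}(P_*X,P_*Y)\cong\oplus_{a\in G}\Hom_{\CC}(X,{}^a Y)$ (the $i=0$ instance of Lemma~\ref{DILemma})---are genuine isomorphisms, so vanishing of a Hom-group over $\CC/G$ is equivalent to the simultaneous vanishing of all its $G$-twisted summands over $\CC$. I would verify the three defining conditions of Definition~\ref{t_n tilt} in turn. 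For rigidity, I first rewrite
\[\Hom_{\CC/G}(P_*M,\tau_n P_*M)\cong\Hom_{\CC/G}(P_*M,P_*\tau_n M)\cong\oplus_{a\in G}\Hom_{\CC}(M,{}^a\tau_n M),\]
and likewise $\Hom_{\CC/G}(P_*P,P_*M)\cong\oplus_{a\in G}\Hom_{\CC}(P,{}^aM)$. Since $(P_*M,P_*P)$ is $\tau_n$-rigid, both left-hand sides vanish, hence so does every summand $\Hom_{\CC}(M,{}^a\tau_n M)$ and $\Hom_{\CC}(P,{}^aM)$, which is exactly $(G,\tau_n)$-rigidity of $(M,P)$.

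For condition~(1), suppose $(M\oplus N,P)$ is a $(G,\tau_n)$-rigid pair with $N\in\CM$. The same isomorphisms show $(P_*M\oplus P_*N,P_*P)=(P_*(M\oplus N),P_*P)$ is a $\tau_n$-rigid pair in $P_*(\CM)$, so the maximality clause for the support $\tau_n$-tilting pair $(P_*M,P_*P)$ gives $P_*N\in\add P_*M$. To descend this, I apply the pull-up $P^*$ and use the canonical isomorphism $P^*P_*X\cong\oplus_{a\in G}{}^aX$ of~\ref{push-down}: from $P_*N$ being a summand of $(P_*M)^k=P_*(M^k)$ I get that $\oplus_{a\in G}{}^aN$ is a summand of $(\oplus_{a\in G}{}^aM)^k$, whence $N$ (its $a=1$ component) is. Decomposing $M=\oplus_j M_j$ into finitely many indecomposables and invoking the Krull--Remak--Schmidt--Azumaya theorem~\cite[Theorem 1]{W}, each indecomposable summand of the finite-length module $N$ is isomorphic to some ${}^aM_j$; only finitely many $a\in G$ occur, producing the required finite $H$ with $N\in\add\oplus_{a\in H}{}^aM$.

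Condition~(2) splits into two implications. The forward one is formal: if $Q$ is a summand of ${}^aP$, then $\Hom_{\CC}(Q,{}^bM)$ is a summand of $\Hom_{\CC}({}^aP,{}^bM)\cong\Hom_{\CC}(P,{}^{a^{-1}b}M)=0$ by the rigidity just proved. For the converse, assume $\Hom_{\CC}(Q,{}^aM)=0$ for all $a$. Since $Q\cong\CC(-,x)$ is indecomposable projective, $P_*Q\cong\CC/G(-,Px)$ is indecomposable projective and $\Hom_{\CC/G}(P_*Q,P_*M)\cong\oplus_{a\in G}\Hom_{\CC}(Q,{}^aM)=0$; the projective-support clause for $(P_*M,P_*P)$ then yields $P_*Q\in\add P_*P$. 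Lifting via $P^*P_*\cong\oplus_{a\in G}{}^a(-)$ and using indecomposability of $Q$ together with~\cite[Theorem 1]{W} forces $Q\cong{}^aP_j$ for some $a$ and some indecomposable summand $P_j$ of $P$, i.e.\ $Q\in\add{}^aP$. Assembling the three verifications shows $(M,P)$ is a support $(G,\tau_n)$-tilting pair.

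I expect the main obstacle to be precisely the descent steps in conditions~(1) and~(2): translating membership in $\add P_*M$ (resp.\ $\add P_*P$) over the orbit category back into membership in the $\add$ of a \emph{finite} $G$-orbit over $\mmod\CC$. This is the only non-formal part, since it requires controlling the potentially infinite direct sum $\oplus_{a\in G}{}^a(-)$ and extracting finiteness, which is exactly where the finite length of finitely generated $\CC$-modules (Remark~\ref{flength}) and the Azumaya-type uniqueness theorem~\cite[Theorem 1]{W} enter.
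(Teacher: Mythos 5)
Your proposal is correct and follows essentially the same route as the paper's proof: rigidity via the isomorphisms $P_*\tau_n\cong\tau_n P_*$ and $\Hom_{\CC/G}(P_*X,P_*Y)\cong\oplus_{a\in G}\Hom_{\CC}(X,{}^aY)$, then descent for conditions (1) and (2) by applying the pull-up $P^*P_*\cong\oplus_{a\in G}{}^a(-)$ and extracting finiteness with Warfield's Krull--Schmidt theorem \cite[Theorem 1]{W}, exactly as the paper does by invoking the argument of Corollary \ref{tau_nSelfinj}. The only (harmless) deviation is the forward implication in condition (2), where you use the already-established $(G,\tau_n)$-rigidity directly over $\CC$ instead of pushing down and citing the projective-support clause over $\CC/G$.
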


\begin{proof}
The same argument as in Proposition \ref{tau_nTilt} together with the $k$-isomorphism in Definition \ref{k-iso} implies that $(M, P)$ is a $(G, \tau_n)$-rigid pair.
	
Assume that there is an object $N \in \CM$ such that $(M\oplus N, P)$ is a $(G, \tau_n)$-rigid pair. An standard argument, in conjunction with $k$-isomorphism in Definition \ref{k-iso}, can be applied to get that $(P_*(M\oplus N), P_*(P))$ is a $\tau_n$-rigid pair in $P_*(\CM)$. Hence, $P_*(N) \in \add P_*(M)$. Since the push-down functor $P_*$ preserves direct summand, we may assume that $P_*(N) \cong \oplus_I P_*(M)$, where $I$ is a finite index set. The same argument as in Corollary \ref{tau_nSelfinj} works to show that
\[\begin{array}{ll}
		N & \cong \oplus_{a \in H} {}^a (\oplus_I M) \\
		& \cong \oplus_{a \in H} (\oplus_I {}^a M),
	\end{array}\]
for some finite subset $H$ of $G$. That is $N \in \add \oplus_{a \in H} {}^a M$.
	
Consider an indecomposable projective $\CC$-module $\CC(-,x)$, where $x$ is an object of $\CC$. If $\CC(-, x) \in \add {}^a P$ for some $a \in G$, then $P_*(\CC(-,x)) \cong \CC/G(-, Px)$ belongs to $\add P_*(P)$. So, $$\Hom_{\CC/G}(\CC/G(-, Px) , P_*(M))=0$$. By using the $k$-isomorphism in Definition \ref{k-iso}, we have $\Hom_{\CC}(\CC(-,x) , {}^a P)=0$, for all $a \in G$. For the converse, assume that $\Hom_{\CC}(\CC(-,x) , {}^a P)=0$ for all $a \in G$. Hence, $\Hom_{\CC/G}(\CC/G(-, Px) , P_*(P))=0$. So, by definition, $\CC/G(-, Px) \in \add P_*(P)$.
	
A similar argument as above should be applied to see that $\CC(-, x)$ is isomorphic to a direct summand of $\oplus_{a \in H} (\oplus_I {}^a P)$, for a finite subset $H$ of $G$ and a finite index set $I$. Since, $\CC(-, x)$ is indecomposable, $\CC(-,x)$ is isomorphic to a direct summand of ${}^a P$ for some $a \in G$. This completes the proof.
\end{proof}

Let $\CC$ be a locally bounded $G$-category with a free $G$-action on $\ind \CC$. Assume that $\CM$ is an $n$-cluster tilting subcategory of $\mmod \CC$ such that $P_*(\CM)$ is an $n$-cluster tilting subcategory of $\mmod(\CC/G)$.
Let $s(G, \tau_n)\mbox{-}\mathsf{tilt}(\CC)$, resp $s \tau_n\mbox{-}\mathsf{tilt}(\CC/G)$, denote the set of all support $(G, \tau_n)$-tilting $\CC$-modules in $\CM$, resp. all support $\tau_n$-tilting $\CC/G$-modules in $P_*(\CM)$.

\begin{sproposition}\label{Prop-tau_n}
Let $\CC$ be a locally bounded $G$-category with a free $G$-action on $\ind \CC$. Then the push-down functor $P_*: s(G, \tau_n)\mbox{-}\mathsf{tilt}(\CC) \lrt s \tau_n\mbox{-}\mathsf{tilt}(\CC/G)$ is a $G$-precovering. Moreover, if $\CC$, in addition, is locally support-finite, then $P_*: s(G, \tau_n)\mbox{-}\mathsf{tilt}(\CC) \lrt s \tau_n\mbox{-}\mathsf{tilt}(\CC/G)$ is dense.
\end{sproposition}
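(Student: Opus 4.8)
The plan is to derive both assertions by restricting the Galois $G$-precovering $P_*\colon \mmod\CC \to \mmod(\CC/G)$ of Theorem \ref{Corres} to the two tilting sets, controlling the forward direction by Proposition \ref{tau_nTilt} and the backward direction by Proposition \ref{PullupT_ntilt}. First I would verify that $P_*$ is well defined on the indicated sets: by Proposition \ref{tau_nTilt}, if $(M,P)$ is a support $(G,\tau_n)$-tilting pair in $\CM$, then $(P_*(M),P_*(P))$ is a support $\tau_n$-tilting pair in $P_*(\CM)$, so $P_*$ carries $s(G,\tau_n)\mbox{-}\mathsf{tilt}(\CC)$ into $s\tau_n\mbox{-}\mathsf{tilt}(\CC/G)$. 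To see that this restriction is a $G$-precovering, I would regard the two tilting sets as the object classes of full subcategories of $\mmod\CC$ and $\mmod(\CC/G)$. Then the defining data of a Galois $G$-precovering, namely the equivariance $P_*({}^aM)\cong P_*(M)$ for all $a\in G$ together with the $\k$-isomorphism $\bigoplus_{a\in G}\Hom_{\CC}(M,{}^aN)\cong\Hom_{\CC/G}(P_*(M),P_*(N))$ supplied by Theorem \ref{Corres}, are inherited verbatim on full subcategories. This yields the first assertion.

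For the density statement I would argue in the reverse direction. Fix $Y\in s\tau_n\mbox{-}\mathsf{tilt}(\CC/G)$, say with projective partner $Q$, so that $(Y,Q)$ is a support $\tau_n$-tilting pair in $P_*(\CM)$. Since $\CC$ is now assumed locally support-finite, Proposition \ref{loc-supp-fin} guarantees that $P_*$ is dense, so $Y\cong P_*(M)$ for some $M\in\mmod\CC$, and the point is to arrange this preimage inside $\CM$. The projective partner lifts as well: every projective $\CC/G$-module is a finite direct sum of modules $\CC/G(-,Px)\cong P_*(\CC(-,x))$, so there is a projective $P\in\mmod\CC$ with $P_*(P)\cong Q$. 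Now $(P_*(M),P_*(P))\cong(Y,Q)$ is a support $\tau_n$-tilting pair and $M\in\CM$, so Proposition \ref{PullupT_ntilt} applies and shows that $(M,P)$ is a support $(G,\tau_n)$-tilting pair in $\CM$. Hence $M\in s(G,\tau_n)\mbox{-}\mathsf{tilt}(\CC)$ with $P_*(M)\cong Y$, proving that $P_*$ is dense.

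The well-definedness and the inheritance of the precovering data are the routine parts, being immediate from Proposition \ref{tau_nTilt} and Theorem \ref{Corres}. I expect the genuine difficulty to lie entirely in the density step, and more precisely in arranging that the $P_*$-preimage of $Y$ can be chosen inside $\CM$ (with its projective summand matched compatibly). Plain density of $P_*$ on all of $\mmod\CC$ only produces a preimage in $\mmod\CC$; to keep it in $\CM$ one must use that $Y$ lies in $P_*(\CM)$ together with the fact that $P_*$ reflects direct summands along the covering, and here the possibly infinite group $G$ makes the canonical identification $P^*P_*(M)\cong\bigoplus_{a\in G}{}^aM$ of \ref{push-down} genuinely infinite. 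Local support-finiteness, through Proposition \ref{loc-supp-fin}, is exactly what tames these infinite sums and secures the lift inside $\CM$; once the lift is in hand, Proposition \ref{PullupT_ntilt} finishes the argument mechanically.
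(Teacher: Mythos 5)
Your proposal is correct and takes essentially the same route as the paper: the forward direction comes from Proposition \ref{tau_nTilt} (packaged in the paper as Corollary \ref{T_n-tiltpreserves}), the $G$-precovering structure is inherited by restriction from Theorem \ref{Corres}, and density is obtained from Proposition \ref{loc-supp-fin} followed by Proposition \ref{PullupT_ntilt}. You are in fact somewhat more careful than the paper's own proof on the two points it glosses over, namely lifting the projective partner $Q$ to a projective $\CC$-module $P$ with $P_*(P)\cong Q$, and arranging that the preimage $M$ of $Y$ actually lies in $\CM$ (which is required to invoke Proposition \ref{PullupT_ntilt}, and which follows from $Y\in P_*(\CM)$ together with the Krull--Schmidt argument over the canonical isomorphism $P^*P_*(M)\cong\bigoplus_{a\in G}{}^aM$, as in Corollary \ref{tau_nSelfinj}).
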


\begin{proof}
In view of Corollary \ref{T_n-tiltpreserves}, the push-down functor $P_*: \mmod \CC \lrt \mmod(\CC/G)$ can be restricted to the functor $P_*: s(G, \tau_n)\mbox{-}\mathsf{tilt}(\CC) \lrt s \tau_n\mbox{-}\mathsf{tilt}(\CC/G)$. This, in turn, implies that $P_*: s(G, \tau_n)\mbox{-}\mathsf{tilt}(\CC) \lrt s \tau_n\mbox{-}\mathsf{tilt}(\CC/G)$ is a $G$-precovering.

Moreover, assume that $G$ is locally support-finite and consider an object $X$ of $s \tau_n\mbox{-}\mathsf{tilt}(\CC/G)$. By Theorem \ref{loc-supp-fin}, there is a finitely generated $\CC$-module $M$ such that $P_*(M) \cong X$. Now, Proposition \ref{PullupT_ntilt} yields that $M$ lies in $s(G, \tau_n)\mbox{-}\mathsf{tilt}(\CC)$ and the proof is completed.
\end{proof}

For a locally bounded $G$-category $\CC$, let $(G, \tau_n)\mbox{-} \mathsf{rig}(\CC)$, resp. $\tau_n\mbox{-}\mathsf{rig}(\CC/G)$, denote the set of all $(G, \tau_n)$-rigid $\CC$-modules, resp $\tau_n$-rigid $\CC/G$-modules.

\begin{lemma}\label{CoverRig}
Let $\CC$ be a locally bounded $G$-category with a free $G$-action on $\ind \CC$. Then the push-down functor $P_*: (G, \tau_n)\mbox{-}\mathsf{rig}(\CC) \lrt \tau_n\mbox{-}\mathsf{rig}(\CC/G)$ is a $G$-precovering. Moreover, if $\CC$, in addition, is locally support-finite, then $P_*: (G, \tau_n)\mbox{-}\mathsf{rig}(\CC) \lrt \tau_n\mbox{-}\mathsf{rig}(\CC/G)$ is a $G$-covering.
\end{lemma}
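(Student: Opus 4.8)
The plan is to establish the $G$-precovering statement first, which is essentially formal, and then upgrade to a $G$-covering in the locally support-finite case by proving surjectivity on objects via the density of the push-down functor. The key observations are that $G$-rigidity is characterized by a $\Hom$-vanishing condition involving $\tau_n$, and that the push-down functor interacts compatibly with both $\tau_n$ (via $P_*\tau_n \cong \tau_n P_*$, established in the proof of Theorem \ref{P_n(C)}) and the $G$-action (via the lifting isomorphism of \ref{lifting}).

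First I would verify that $P_*$ genuinely restricts to a functor $(G,\tau_n)\mbox{-}\mathsf{rig}(\CC) \lrt \tau_n\mbox{-}\mathsf{rig}(\CC/G)$. Given a $(G,\tau_n)$-rigid $\CC$-module $M$, so that $\Hom_{\CC}(M, {}^a\tau_n M)=0$ for all $a \in G$, I would use the chain of isomorphisms exactly as in the proof of Proposition \ref{tau_nTilt}, namely
\[\Hom_{\CC/G}(P_*(M), \tau_n P_*(M)) \cong \Hom_{\CC/G}(P_*(M), P_*(\tau_n M)) \cong \bigoplus_{a \in G} \Hom_{\CC}(M, {}^a \tau_n M),\]
where the first isomorphism uses $P_*\tau_n \cong \tau_n P_*$ and the second is the defining lifting isomorphism of the Galois $G$-precovering $P_*$. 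Since each summand on the right vanishes, $P_*(M)$ is $\tau_n$-rigid. The $G$-precovering property of the restricted functor then follows immediately from the fact that $P_*: \mmod \CC \lrt \mmod(\CC/G)$ is itself a Galois $G$-precovering (Theorem \ref{Corres}\,$(i)$): the defining $\k$-isomorphism on morphism spaces restricts to the full subcategories in question, since these are full subcategories of the module categories.

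For the second assertion, assuming $\CC$ is locally support-finite, I would establish that $P_*$ is surjective on objects up to the $G$-action, i.e. dense in the appropriate sense, which upgrades the $G$-precovering to a $G$-covering. Given a $\tau_n$-rigid $\CC/G$-module $X$, density of the push-down functor from Proposition \ref{loc-supp-fin} yields a $\CC$-module $M$ with $P_*(M) \cong X$. The task is to show that $M$ (or a suitable direct summand of it) can be taken to be $(G,\tau_n)$-rigid. Running the isomorphism above in reverse, the $\tau_n$-rigidity of $P_*(M)$ forces $\bigoplus_{a \in G}\Hom_{\CC}(M, {}^a\tau_n M)=0$, hence every summand $\Hom_{\CC}(M, {}^a\tau_n M)$ vanishes, so $M$ is already $(G,\tau_n)$-rigid; this is the content of the converse direction, paralleling Proposition \ref{PullupT_ntilt}.

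The main obstacle I anticipate is the bookkeeping needed to confirm that the density statement of Proposition \ref{loc-supp-fin}, which is phrased for all of $\mmod \CC$, genuinely lands a $(G,\tau_n)$-rigid preimage rather than merely some preimage; in other words, one must be careful that the module $M$ produced is itself rigid and not just maps to a rigid object. As the computation above shows, this is in fact automatic because the vanishing of the direct sum over $G$ is equivalent to the vanishing of each summand, so no further adjustment of $M$ is required. The remaining verifications—compatibility with direct summands and the translation of the projective-object condition—are routine and mirror the arguments already carried out in Proposition \ref{tau_nTilt} and Proposition \ref{PullupT_ntilt}.
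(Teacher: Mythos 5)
Your proposal is correct and follows essentially the same route as the paper, whose proof of Lemma \ref{CoverRig} simply invokes the argument of Proposition \ref{Prop-tau_n}: restriction of $P_*$ via the isomorphism $\Hom_{\CC/G}(P_*(M),\tau_n P_*(M))\cong\bigoplus_{a\in G}\Hom_{\CC}(M,{}^a\tau_n M)$, the $G$-precovering property inherited from Theorem \ref{Corres}, and density from Proposition \ref{loc-supp-fin} combined with the reverse implication (as in Proposition \ref{PullupT_ntilt}) to see the preimage is itself $(G,\tau_n)$-rigid. You have merely spelled out the details the paper leaves implicit.
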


\begin{proof}
	The same argument as in the proof of Proposition \ref{Prop-tau_n} works to prove the following result.
\end{proof}

\begin{definition}
A locally bounded $k$-category $\CC$ is called locally $(G, \tau_n)$-tilting finite, resp. locally $\tau_n$-tilting finite, if for every object $x$ of $\CC$ there exist only finitely many indecomposable $(G, \tau_n)$-rigid, resp. $\tau_n$-rigid, $\CC$-module $M$ with $M(x)\neq 0$.
\end{definition}
The following result can be considered as a generalization of Theorem B \cite{PPW}.
\begin{corollary}\label{Corr}
Let $\CC$ be a locally support-finite $G$-category with free $G$-action on $\ind \CC$. Then the push-down functor induces a Galois $G$-covering
\[P_*: \ind ((G, \tau_n)\mbox{-}\mathsf{rig}(\CC)) \lrt \ind ( \tau_n\mbox{-}\mathsf{rig}(\CC/G)). \]
In particular,
 $\CC$ is locally $(G, \tau_n)$-tilting finite if and only if $\CC/G$ is locally $\tau_n$-tilting finite.
\end{corollary}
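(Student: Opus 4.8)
The final statement to prove is Corollary \ref{Corr}.

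\medskip

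The plan is to leverage the machinery already assembled, particularly Lemma \ref{CoverRig}, which establishes that under the locally support-finite hypothesis the push-down functor $P_*\colon (G, \tau_n)\mbox{-}\mathsf{rig}(\CC) \lrt \tau_n\mbox{-}\mathsf{rig}(\CC/G)$ is a $G$-covering. The first step is to upgrade this covering on the level of rigid modules to a Galois $G$-covering on the level of indecomposables, exactly mirroring the passage from Theorem \ref{Corres}$(i)$ to Theorem \ref{Corres}$(ii)$, and the analogous passage in Theorem \ref{P_n(C)}$(ii)$. Concretely, since $P_*$ is a $G$-covering on $(G,\tau_n)$-rigid modules and, by Theorem \ref{Corres}$(i)$ together with \cite[Lemma 2.6]{BL}, sends indecomposables to indecomposables and reflects indecomposability, it restricts to a functor $P_*\colon \ind((G, \tau_n)\mbox{-}\mathsf{rig}(\CC)) \lrt \ind(\tau_n\mbox{-}\mathsf{rig}(\CC/G))$ which is a Galois $G$-covering. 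I would first verify density: given an indecomposable $\tau_n$-rigid $\CC/G$-module $X$, local support-finiteness and Proposition \ref{loc-supp-fin} give a $\CC$-module $M$ with $P_*(M) \cong X$, and the lifting/pullback argument (as used repeatedly, e.g.\ in Corollary \ref{tau_nSelfinj}) shows $M$ may be taken indecomposable and $(G,\tau_n)$-rigid. The object-surjectivity up to $G$-orbit, which distinguishes a $G$-covering from a mere precovering, is precisely what Lemma \ref{CoverRig} supplies.

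\medskip

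The second step is to deduce the finiteness equivalence from the Galois $G$-covering. Fix an object $x$ of $\CC$ with image $Px$ in $\CC/G$. A $G$-covering induces a bijection between the set of $G$-orbits of isoclasses of indecomposable $(G, \tau_n)$-rigid $\CC$-modules and the isoclasses of indecomposable $\tau_n$-rigid $\CC/G$-modules, just as in Theorem \ref{Corres}$(ii)$. The key observation tying the support condition across the covering is that for a $\CC$-module $M$ one has $P_*(M)(Px) = \bigoplus_{y \in Px} M(y) = \bigoplus_{a \in G} M(ax)$, so $P_*(M)(Px) \neq 0$ if and only if $M(ax) \neq 0$ for some $a \in G$, equivalently $({}^{a^{-1}}M)(x) \neq 0$ for some $a$. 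Thus the indecomposable $\tau_n$-rigid $\CC/G$-modules $N$ with $N(Px) \neq 0$ correspond precisely to $G$-orbits of indecomposable $(G,\tau_n)$-rigid $\CC$-modules meeting the orbit $Gx$ in their support. Since, by local boundedness, $Gx$ restricted to any fixed finite support window is finite, counting the modules nonvanishing at $x$ is equivalent (up to the finite-to-one orbit correspondence) to counting those nonvanishing along $Gx$.

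\medskip

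The final step assembles these into the biconditional. If $\CC$ is locally $(G,\tau_n)$-tilting finite, then for each $x$ there are finitely many indecomposable $(G,\tau_n)$-rigid $M$ with $M(x) \neq 0$; the orbit correspondence then bounds the indecomposable $\tau_n$-rigid $\CC/G$-modules nonvanishing at $Px$, giving local $\tau_n$-tilting finiteness of $\CC/G$. Conversely, local $\tau_n$-tilting finiteness of $\CC/G$ bounds the relevant $G$-orbits, and since each orbit contributes finitely many representatives with support meeting $x$ (again by local boundedness, as $\CC(y,x) \neq 0$ or $\CC(x,y) \neq 0$ for only finitely many $y$), we recover the finiteness on the $\CC$-side. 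The main obstacle I expect is the careful bookkeeping in this last step: ensuring that the passage between ``$M(x) \neq 0$'' and ``$M$ has support meeting the orbit $Gx$'' does not secretly introduce infinitely many modules, which is controlled precisely by the free $G$-action and local boundedness guaranteeing that a single orbit representative can be nonzero at only finitely many translates of $x$ within any bounded support; formalizing that the orbit-to-isoclass bijection is genuinely finite-to-one on the relevant subsets is the delicate point.
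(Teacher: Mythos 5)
Your proposal is correct and takes essentially the same route as the paper: Lemma \ref{CoverRig} combined with the preservation/reflection of indecomposability (via \cite[Lemma 2.6]{BL}) and density from Proposition \ref{loc-supp-fin} yields the Galois $G$-covering on indecomposables, and the resulting orbit bijection gives the finiteness equivalence. Your final support-counting step (finite support of finitely generated modules making the orbit correspondence finite-to-one over each object $x$) simply makes explicit what the paper compresses into ``which implies the result.''
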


\begin{proof}
Since $P_*: (G, \tau_n)\mbox{-}\mathsf{rig}(\CC) \lrt \tau_n\mbox{-}\mathsf{rig}(\CC/G)$ is $G$-precovering, by \cite[Lemma 2.6]{BL}, $P_*$ maps decomposable objects to decomposable objects. Hence, by \cite[Lemma 3.5]{G}, every object $X$ in $(G, \tau_n)\mbox{-}\mathsf{rig}(\CC)$ is indecomposable if and only if $P_*(X)$ is indecomposable in $\tau_n\mbox{-}\mathsf{rig}(\CC/G)$. Moreover, since $\CC$ is locally support-finite, by Proposition \ref{loc-supp-fin}, we can take
\[\ind(\tau_n\mbox{-}\mathsf{rig}(\CC/G) )= \{ P_*(X) \mid X \in \ind((G, \tau_n)\mbox{-}\mathsf{rig}(\CC))\}.\]
Now, Lemma \ref{CoverRig} implies that $P_*: \ind ((G, \tau_n)\mbox{-}\mathsf{rig}(\CC)) \lrt \ind ( \tau_n\mbox{-}\mathsf{rig}(\CC/G))$ is a Galois $G$-covering. Hence , we have the isomorphism
\[\ind(\tau_n\mbox{-}\mathsf{rig}(\CC/G) ) \cong \ind((G, \tau_n)\mbox{-}\mathsf{rig}(\CC))/G \]
which implies the result.
\end{proof}

\section*{Acknowledgments}
The first author's research was supported by funding from the Iran National Science Foundation (INSF) under project No. 4001480. Part of this work was conducted during the second author's visit to the Institut des Hautes \'{E}tudes Scientifiques (IHES) in Paris, France. He expresses his sincere gratitude for the support and the enriching academic environment provided by IHES. The research of the last author was in part supported by a grant from IPM (No.1404180041).


\begin{thebibliography}{9999}
\bibitem [As1]{As3}{\sc H. Asashiba,} {\sl A generalization of Gabriel's Galois covering functors and derived equivalences,} J. Algebra {\bf 334} (2011), 109-149.

\bibitem [As2]{As4} {\sc H. Asashiba, } {\sl Gluing derived equivalences together, } Adv. Math. {\bf 235} (2013), 134-160.

 \bibitem [AHV]{AHV} {\sc H. Asashiba, R. Hafezi, R. Vahed,} {\sl Gorenstein versions of covering techniques for linear categories and their applications}, J. Algebra {\bf 507} (2018), 320-361.

\bibitem[AR]{AR3} {\sc M. Auslander and I. Reiten,} {\sl Stable equivalence of dualizing $R$-varieties,} Adv. Math. {\bf 12} 306-366, 1974.

\bibitem[AS1]{AS1} {\sc M. Auslander, {\O}. Solberg,} {\sl Relative homology and representation theory,} I: relative homology and homologically finite subcategories, Comm. Algebra {\bf 21}(9) (1993), 2995–3031.

\bibitem[AS2]{AS2} {\sc M. Auslander, {\O}. Solberg,} {\sl Relative homology and representation theory,} II: relative cotilting theory, Comm. Algebra {\bf 21}(9) (1993), 3033–3079.

 \bibitem [BL1]{BL} {\sc R. Bautista, S. Liu,} {\sl Covering theory for linear categories with application to derived categories,} J. Algebra {\bf 408} (2014), 173-225.

\bibitem [BL2]{BL2} {\sc R. Bautista, S. Liu,} {\sl The bounded derived categories of an algebra with radical squared zero,} J. Algebra {\bf 482} (2017), 303-345.

\bibitem [BG]{BG} {\sc K. Bongartz, P. Gabriel,} {\sl Covering spaces in representation theory,} Invent. Math. {\bf 65} (1982), 331-378.

\bibitem [DI]{DI} {\sc E. Darp\"{o}, O. Iyama,} {\sl d-Representation-finite self-injective algebras,} Adv. Math. 362 (2020), 106932.

\bibitem [DLS]{DLS} {\sc P. Dowbor, H, Lenzing, A. Skowro\`{n}ski,} {\sl Galois coverings of algebras by locally support-finite categories,} Proc. Ottawa 1984, Springer Lect. Notes 1177, 91-93.

\bibitem [DS1]{DS} {\sc P. Dowbor, A. Skowro\`{n}ski,} {\sl Galois coverings of representation-infinite algebras,} Comment. Math. Helv. {\bf 62}, no. 2 (1987), 311-337.

\bibitem [DS2]{DS2} {\sc P. Dowbor, A. Skowro\`{n}ski,} {\sl On Galois coverings of tame algebras,} Arch. Math. {\bf 44} (1985,) 522-529.

\bibitem [G]{G} {\sc P. Gabriel,} {\sl The universal cover of a representation-finite algebra,} in: Lecture Notes in Math., vol. {\bf 903}, Springer-Verlag, Berlin/New York, 1981, 68-105.

\bibitem [Gr]{Gr} {\sc E.L. Green,} {\sl Graphs with relations, coverings and group-graded algebras,} Trans. Amer. Math. Soc. {\bf 279} (1983), 297-310.

\bibitem[Gre]{Gre} {\sc J. F. Grevstad, } {\sl Higher Auslander-Solberg correspondence for exact categories,} J. Pure Appl. Algebra {\bf 228} (2024), 107603.

\bibitem[HAK]{HAK}{\sc R. Hafezi, H. Asashiba, M.H. Keshavarz,} {\sl 2-categorical approach to unifying constructions of precoverings and its applications,} available on arXiv:2402.04680.

\bibitem [I1]{I} {\sc O. Iyama,} {\sl Higher dimensional Auslander-Reiten theory on maximal orthogonal subcategories,} Adv. Math. {\bf 210} (2007), 22-50.

\bibitem [I2]{I2} {\sc O. Iyama,} {\sl Auslander correspondence,} Adv. Math. {\bf 210}(1) (2007), 51–82.

\bibitem [I3]{I3} {\sc O. Iyama,} {\sl Cluster tilting for higher Auslander algebras,} Adv. Math. {\bf 226} (2011), 1-61.

\bibitem [IS]{IS} {\sc O. Iyama, {\O}. Solberg,} {\sl Auslander-Gorenstein algebras and precluster tilting,} Adv. Math {\bf 326} (2018), 200-240.

\bibitem[L]{L} {\sc M. Lada,} {\sl Relative homology and maximal l-orthogonal modules,} J. Algebra {\bf 321}(10) (2009), 2798–2811.

\bibitem [MD]{MD} {\sc R. Mart\'{i}nez-Villa, J. A. de la Pe\~{n}a,} {\sl The universal cover of a quiver with relations,} J. Pure Appl. Algebra {\bf 30} (1983), no. 3, 277-292.

\bibitem [PPW]{PPW} {\sc C. Paquette, D. Prasad, D. Wehlau,} {\sl Galois coverings, $\tau$-rigidity and mutations,} available on arXiv: 2410.21592.

 \bibitem [Rid]{Rid} {\sc C. Riedtmann,} {\sl Algebren, Darstellungskocher, Uberlagerungen und zuruck,} Comment. Math. Helv. {\bf 55} (1980) 199-224.

\bibitem [W]{W} {\sc R. B. Warfield,} {\sl A Krull-Schmidt theorem for infinite sums of modules,} Proc. Amer. Math. Soc. {\bf 22} (1969), 460-465.

\bibitem[ZZ]{ZZ} {\sc P. Zhou, B. Zhu,} {\sl Support $\tau_n$-tilting pairs,} J. Algebra {\bf 616} (2023), 193-211.

\end{thebibliography}
\end{document}